%%% 1/4/06
\documentclass{amsart}
\usepackage{amsmath,amsfonts,amssymb,amscd,amsthm,epsf}
\usepackage{pinlabel}
\overfullrule=0pt
%%%%%%%  PAGE STYLE/SIZING  %%%%%%%%%%%%
\textheight=8.5truein
\textwidth=6.3truein
\headheight=14pt
\footskip=36pt
\hoffset=-.75truein
\voffset=-.5truein
\pagestyle{plain}

%%%%%%%  ENVIRONMENT SETTINGS %%%%%%%
\newtheorem{prop}{Proposition}[section]
\newtheorem{thm}[prop]{Theorem}
\newtheorem{cor}[prop]{Corollary}
\newtheorem{ques}[prop]{Question}

\newtheorem{conj}[prop]{Conjecture}

\theoremstyle{definition}
\newtheorem{de}[prop]{Definition}

\theoremstyle{remark}
\newtheorem{Remark}[prop]{Remark}             %% standard numbered style
\newtheorem{Remarks}[prop]{Remarks}             %% standard numbered style
            %% standard un-numbered style
%%%%%%%%%%%%%%%%%%%%%%%%%%%%%%%%%%%%%%%%%%%

\def\C{{\mathbb C}}

\def\Z{{\mathbb Z}}
\def\R{{\mathbb R}}
\def\Q{{\mathbb Q}}
\def\F{{\mathbb F}}
\def\inter{\mathop{\rm int}\nolimits}
\def\cl{\mathop{\rm cl}\nolimits}

\def\dim{\mathop{\rm dim}\nolimits}

\def\SO{\mathop{\rm SO}\nolimits}

\def\di{\mathop{\rm div}\nolimits}

\def\co{\colon\thinspace}
\def\J{{\mathcal J}}
\def\s{{\mathcal S}}

%%%%%%%%%%%%%%%%%%%%%%%%%%%%%%%%%%%%%%%%%%%
\begin{document}
\title{Topological convexity in complex surfaces}
\author{Robert E. Gompf}
\address{%Department of Mathematics, 
The University of Texas at Austin%, 1 University Station C1200, Austin, TX 78712-0257
}
\email{gompf@math.utexas.edu}
%\dedicatory{Preliminary}
%\date{July 24, 2022}
\begin{abstract}
We study a notion of strict pseudoconvexity in the context of topologically (often unsmoothably) embedded 3-manifolds in complex surfaces. Topologically pseudoconvex (TPC) 3-manifolds behave similarly to their smooth analogues, cutting out open domains of holomorphy (Stein surfaces), but they are much more common. We provide tools for constructing TPC embeddings, and show that every closed, oriented 3-manifold $M$ has a TPC embedding in a compact, complex surface (without boundary) realizing any homotopy class of almost-complex structures (the analogue of the homotopy class of the contact plane field in the smooth case). We prove our tool theorems with invariants that classify almost-complex structures on any 4-manifold homotopy equivalent to $M$. These invariants are amenable to computation and respected by homeomorphisms (not necessarily smooth). We study the two equivalence classes of smoothings on the product of a 3-manifold with a line, and on collared ends. Both classes of smoothings are realized by holomorphic embeddings exhibiting any preassigned homotopy class of almost-complex structures. One class arises from TPC embedded 3-manifolds, while the other likely does not.
\end{abstract}
\maketitle
%\baselineskip=18pt         %% double-spaced draft mode

%%%%%%%%%%%%%%%%%%%111111111111111111111%%%%%%%%%%

\section{Introduction}

Pseudoconvexity is a fundamental notion in complex analysis. A real hypersurface in a complex manifold is {\em strictly pseudoconvex} (SPC) if it is locally biholomorphic to a strictly geometrically convex hypersurface. As an application, {\em Stein manifolds}, that is, complex manifolds that properly, holomorphically embed in $\C^N$, are characterized as open complex manifolds admitting proper maps to $[0,\infty)$ whose regular level sets are SPC. A basic method for finding Stein manifolds is to locate an SPC hypersurface $M$ in $\C^n$ (or in another Stein manifold). Such an $M$ always cuts out a Stein manifold as the interior of the compact region bounded by $M$. We focus on real dimension 4 (complex {\em surfaces}), where both complex analysis and differential topology are permeated with unique subtleties. In this dimension, topologically embedded 3-manifolds frequently cannot be smoothed by a topological isotopy, and such isotopies typically generate families that cut out open subsets realizing infinitely many diffeomorphism types (which cannot happen in other dimensions). While SPC 3-manifolds are highly constrained, a much more flexible notion of {\em topologically pseudoconvex} (TPC) 3-manifolds was introduced in \cite{steintop}. These are frequently unsmoothable, but still share some basic properties of SPC manifolds. Notably, they cut out open Stein surfaces, although these typically do not have finite differential topology. The present paper investigates TPC 3-manifolds in more depth, exhibiting their full flexibility: The most subtle topological structure of an SPC manifold, namely its contact hyperplane field, has an analogue (at least up to homotopy) for TPC manifolds, and every closed, oriented 3-manifold has a TPC embedding in a closed complex surface, realizing any preassigned choice of this additional structure. We prove this by presenting some basic tool theorems for constructing TPC embeddings and tracking their almost-complex structures. The proofs of these theorems lead us into a deeper discussion of smooth and almost-complex structures on manifolds homeomorphic to $\R\times M^3$.

Our main focus is on 3-manifolds $M$, which we always take to be closed (i.e., compact and without boundary), connected and oriented, and on their topological (i.e., homeomorphic) embeddings in complex surfaces or in more general 4-manifolds $X$. To avoid wild phenomena, we assume the embeddings are {\em bicollared}, that is, they extend in the obvious way to topological embeddings of $\R\times M$. (A {\em collar} is an extension on one side to $[0,\infty)\times M$.) Such an embedding of $M$ frequently cannot be smoothed by a topological isotopy (homotopy through embeddings), and the smooth structure on $\R\times M$ pulled back by such an embedding is typically exotic (not diffeomorphic to the standard smoothing). A topological isotopy of an embedding of $\R\times M$ (or other open 4-manifold) may induce a 1-parameter family of pairwise nondiffeomorphic smoothings on the domain. (Such nonuniqueness can only occur in dimension 4 since the smoothings are sliced concordant, as we discuss below.) We will frequently deal with a 3-manifold $M$ exhibited as the bicollared boundary of a topologically embedded 4-manifold $Y$ in a complex surface. Following \cite{steintop}, we will call such a $Y$ {\em topologically pseudoconvex (TPC)} if it is also a {\em Stein compact}. The latter is a standard term for a compact subset with a Stein neighborhood system, so every neighborhood of it contains a neighborhood that is Stein (in the inherited complex structure). By a classical result, the interior of a Stein compact is always Stein. As in \cite{steintop}, an embedding of a 3-manifold $M$ will be called {\em TPC} if it has a neighborhood biholomorphic to that of a boundary of a TPC 4-manifold (embedded in a possibly different complex surface). A TPC immersion is defined similarly using a holomorphic immersion of the boundary neighborhood. TPC embeddings and immersions behave analogously to their SPC counterparts: A TPC immersion determines an orientation on $M$, which we assume agrees with its preassigned orientation, and a TPC embedding in a Stein surface cuts out a TPC 4-manifold, whose interior is then Stein. However, TPC embeddings are much more flexible than SPC embeddings. It is shown in \cite{steintop} that every topological embedding of a 2-handlebody (a 4-dimensional handlebody with all handles of index at most 2) is topologically isotopic to a TPC embedding. From this, it is deduced that every closed 3-manifold $M$ has a TPC immersion in $\C^2$ and embedding in every closed, simply connected complex surface with $b_\pm$ sufficiently large. The present paper explores the range of additional structure realized by such immersions and embeddings.

The most subtle topological structure associated to an SPC 3-manifold $M\subset X$ is its induced contact plane field. This is given by $\xi=TM\cap JTM$, where $J$ represents fiberwise multiplication by $i$ in the tangent complex 2-plane bundle $TX$. While it is not clear how much of the classification theory of contact structures usefully extends to TPC embeddings, we can at least capture the homotopy class of the plane field $\xi$. For an SPC embedding, $M$ has a neighborhood diffeomorphic to $\R\times M$. The above formula for $\xi$ induces a bijection from homotopy classes of almost-complex structures $J$ on $\R\times M$ to those of oriented plane fields on $M$. (The inverse interprets $\xi$ and its complementary trivial $\R^2$-bundle as complex line fields determining $J$ up to homotopy on $\R\times M$.) A TPC 3-manifold $M$ has a neighborhood $V$ {\em homeomorphic} to $\R\times M$, but typically with an exotic smooth structure, inheriting a complex structure $J$ from the ambient complex surface. We will find a way to keep track of the homotopy class of such an almost-complex structure on the underlying topological manifold $\R\times M$, and show that such structures are much more flexible than their analogs for SPC surfaces.  Theorem~\ref{main} gives conditions guaranteeing TPC embeddings and immersions that preserve such structure. Some sample consequences are given below in Theorem~\ref{main0}. For $\gamma$ in a finitely generated abelian group $G$, let $\di \gamma\in\Z^{\ge0}$ denote the divisibility of $\gamma$ in $G$ modulo torsion, with $\di\gamma=0$ when $\gamma$ has finite order. For $G=H^2(M)$ and $J$ on $V$ as above, we say that $k\in\Z$ is a {\em factor} of the Chern class $c_1(J)$ if $c_1(J)=k\beta$ for some $\beta\in H^2(M)$. (Note that if $c_1(J)$ has order $n$ then all integers congruent to 1 mod $n$ are factors.) Since a TPC immersion $f\co M\to X$ satisfies $f^*c_1(X)=c_1(J)$, $\di c_1(X)$ must be a factor of $c_1(J)$ whenever $f$ exists and $H^2(X)$ is finitely generated and torsion free.

\begin{thm}\label{main0} Let $(M,J)$ be a closed, oriented 3-manifold with an almost-complex structure (up to homotopy) on $\R\times M$. Then:
\begin{itemize}
\item[a)] $(M,J)$ has a TPC embedding in every closed, simply connected complex surface $X$ with $\di c_1(X)$ a factor of $c_1(J)$ and $b_\pm(X)$ sufficiently large (relative to a preassigned upper bound on $\di c_1(X)$ if $c_1(J)$ has finite order).
\item[b)] $(M,J)$ has a TPC immersion in a preassigned closed, simply connected complex surface $X$ if and only if $\di c_1(X)$ is a factor of $c_1(J)$.
\item[c)] $(M,J)$ has a TPC immersion in every nonminimal or ruled surface.
\item[d)] $(M,J)$ has a TPC immersion in a preassigned complex surface $X$ with $c_1(X)=0$ (such as $\C^2$) if and only if $c_1(J)=0$.
\end{itemize}
\end{thm}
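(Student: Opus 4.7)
The plan is to derive all four statements from Theorem~\ref{main} combined with the identity $f^*c_1(X)=c_1(J)$ noted just before the theorem. The necessity directions in (b) and (d) are immediate from that identity: in (d), $c_1(X)=0$ forces $c_1(J)=f^*0=0$; in (b), simple connectivity of $X$ makes $H^2(X)$ torsion-free, so $c_1(X)=k\alpha$ for some $\alpha\in H^2(X)$ with $k=\di c_1(X)$, and pulling back yields $c_1(J)=k\cdot f^*\alpha$, exhibiting $k$ as a factor of $c_1(J)$.

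For the sufficiency directions, I would verify the hypotheses of Theorem~\ref{main} case by case. For (d), $c_1(J)=0=c_1(X)$ are directly compatible. For (a) and (b), write $c_1(J)=k\gamma$ and $c_1(X)=k\alpha$: one exhibits a map whose pullback sends $\alpha$ to $\gamma$ and then invokes Theorem~\ref{main} to upgrade it to a TPC immersion (part (b)) or TPC embedding (part (a), given sufficiently large $b_\pm$). The ``$b_\pm$ sufficiently large'' hypothesis in (a) supplies the geometric room needed to pass from immersion to embedding uniformly in the divisibility parameter. For (c), every nonminimal surface is smoothly $X'\#\overline{\CP^2}$ with an exceptional $(-1)$-sphere $E$ satisfying $\langle c_1(X),[E]\rangle=\pm1$, so $c_1(X)$ is primitive and $\di c_1(X)=1$, making the factor condition vacuous. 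For a ruled surface, $c_1(X)$ evaluates to $\pm2$ on the fiber class, so $\di c_1(X)$ divides $2$; but every closed, oriented 3-manifold is parallelizable and hence spin, so the Wu relation $c_1(J)\equiv w_2(\R\times M)=0\pmod 2$ forces $2$ to already be a factor of $c_1(J)$, and the factor condition holds in either case.

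The main obstacle will be that Theorem~\ref{main0} asserts the full homotopy class of $J$ on $\R\times M$ is realized, not merely its Chern class, and on a 3-manifold oriented 2-plane fields carry finer invariants than $c_1$ alone (a $d_3$-type obstruction taking values in $\Z$ or a quotient thereof). Once a candidate $f$ with the correct $c_1$ pullback is produced, the finer portion of Theorem~\ref{main} must be invoked to match the complete homotopy class. This full flexibility of the tool theorem is what turns each part above into an essentially formal verification; the substantive geometric work has been absorbed into Theorem~\ref{main} itself.
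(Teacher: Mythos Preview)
Your overall plan---reduce everything to Theorem~\ref{main} and the pullback identity $f^*c_1(X)=c_1(J)$---is exactly the paper's approach, and your treatment of the necessity directions and of~(c) and~(d) is essentially correct. (For~(c), note that Theorem~\ref{main}(b) requires $X$ simply connected, which ruled or nonminimal surfaces need not be; but you have actually computed $\langle c_1(X),\alpha\rangle$ for an explicit embedded \emph{sphere} $\alpha$, so you are really invoking Theorem~\ref{main}(a), and the divisibility detour through $\di c_1(X)$ is unnecessary.)

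The genuine gap is in parts~(a) and~(b). You describe the input to Theorem~\ref{main} as ``a map whose pullback sends $\alpha$ to $\gamma$,'' but Theorem~\ref{main} takes no such map: part~(a) asks only for a class $\alpha\in\pi_2(X)$ with $\langle c_1(X),\alpha\rangle$ a factor of $c_1(J)$, and part~(b) asks only for the divisibility and $b_\pm$ bounds. (The map is built \emph{inside} the proof of Theorem~\ref{main}, not supplied by you.) Consequently, for Theorem~\ref{main0}(a) there is nothing to construct: it is Theorem~\ref{main}(b) verbatim, and the only point to explain is why a uniform bound on $b_\pm$ exists---namely, $c_1(J)$ has only finitely many factors when it has infinite order, while the finite-order case is why the preassigned upper bound on $\di c_1(X)$ is hypothesized. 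For Theorem~\ref{main0}(b) you must actually produce $\alpha\in\pi_2(X)$ with $\langle c_1(X),\alpha\rangle=\di c_1(X)$: write $c_1(X)=(\di c_1(X))\beta$ with $\beta$ primitive, use Poincar\'e duality to find $\alpha\in H_2(X)$ with $\langle\beta,\alpha\rangle=1$, and use simple connectivity to identify $H_2(X)$ with $\pi_2(X)$. Your phrasing in terms of cohomology pullbacks misses this step and does not match the hypothesis of Theorem~\ref{main}(a). Your final paragraph is right that the secondary ($d_3$-type) invariant is already absorbed into Theorem~\ref{main}, so once the hypotheses are verified no further work is needed.
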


\noindent In particular, every homotopy class of structures $J$ on $M$ is realized by TPC embeddings as in (a), since every nonminimal $X$ with $b_\pm(X)$ sufficiently large satisfies its hypotheses with $\di c_1(X)=1$. (The integers $b_\pm(X)$ are the dimensions, respectively, of the maximal positive and negative definite subspaces of the rational intersection form.)  The topology of the embeddings is flexible. As constructed in the proof of Theorem~\ref{main}, the embeddings of a given $(M,J)$ cut manifolds $X$ into two pieces, each with $b_\pm$ arbitrarily large, and the pieces with oriented boundary $M$ are minimal (in fact, spin). In constrast, while every closed 3-manifold has infinitely many homotopy classes of oriented plane fields, each of which is realized by at least one contact structure, most such homotopy classes cannot arise for any SPC embedding cutting out a compact region. For example, the manifolds $S^3$, $S^1\times S^2$ and connected sums of these each admit a unique contact plane field realizable in this way, and the diffeomorphism type of the enclosed region is uniquely determined up to blowups ($B^4$, $S^1\times B^3$ and boundary sums of these, respectively). (See \cite{CE}.) A more drastic example is the connected sum $P\#\overline{P}$, where $P$ is the Poincar\'e homology sphere and the bar denotes orientation reversal throughout the text. This admits no tight contact structure \cite{EH}, so no SPC embedding (or immersion), but it has a TPC embedding in $\C^2$ (as the boundary of the 2-handlebody $I\times(P-\inter B^3)$). The resulting almost-complex structure is unique, but all structures on $P\# \overline{P}$ are realized by TPC immersions in $\C^2$ and embeddings as in (a).

The almost-complex structures are surprisingly delicate to track. We work out the details in Section~\ref{Conc} (which is independent of Section~\ref{TPC}). For a 4-manifold $V$ homotopy equivalent to $M$, there is a bijection from the set $\J(V)$ of homotopy classes of almost-complex structures on $V$ to homotopy classes of continuous maps $[V,S^2]\cong[M,S^2]$ (cf.\ \cite[Remarks~4.1.10,~4.1.12]{McS} for closed 4-manifolds). Maps from a 3-complex to $S^2$ were classified up to homotopy by Pontryagin \cite{P} in 1941 using obstruction theory. However, the first bijection is not canonical. Furthermore, proving the above theorem requires us to compare classes in $\J(\R\times M)$ induced by different maps to complex surfaces, which is awkward from the obstruction theory viewpoint. For example, we must keep track of these classes during topological isotopies of $\R\times M$ in $X$, which will typically change the smooth structure on $\R\times M$. While the isotopy will provide a bijection of homotopy classes, it is not a priori clear that the bijection is independent of the choice of isotopy. To remedy these difficulties, we need invariants that classify homotopy classes of almost-complex structures and are isotopy-invariant and directly computable. Fortunately, analogous invariants for plane fields $\xi$ on 3-manifolds were constructed in \cite{Ann}, and these can be adapted. Pontryagin's primary uniqueness obstruction is detected by a surjection $\Gamma$ to $H^2(M)$, depending on a choice of spin structure $s$ (with $\Gamma(\xi,s)$ the difference class between the spin$^\C$-structures determined by $\xi$ and $s$). This satisfies the condition $2\Gamma(\xi,s)=c_1(\xi)$ but is delicate in the presence of 2-torsion. Pontryagin's secondary obstruction gives differences in $\Z/\di c_1(\xi)$ for classes with the same value of $\Gamma$. We measure this by an invariant $\tilde\Theta$ coming from the equality $c_1^2(X)=2\chi(X)+3\sigma(X)$ for closed complex surfaces and its failure when $\partial X=M\ne\emptyset$. (Throughout the text, $\chi$ and $\sigma$ denote the topological Euler characteristic and signature.) The invariants $\Gamma$ and $\tilde{\Theta}$ have become standard tools in contact topology for contact structures for which $c_1(\xi)$ has finite order (allowing a simplified version $\theta$ of $\tilde{\Theta}$). However, we also need them when $c_1(\xi)$ has infinite order. In this case, $\tilde\Theta$ depends on both $s$ and delicate framing data. The present paper probably represents its first essential use in full generality. The invariants $\tilde{\Theta}$ and $\theta$ also contain unexplored information beyond Pontryagin's secondary obstruction (Remark~\ref{tau}(b)).

To understand the effect of topological isotopy on almost-complex structures, it is convenient to pass to a weaker, intrinsic equivalence relation from classical smoothing theory. A {\em sliced concordance} between two smoothings of an open topological manifold $V$ is a smoothing of $I\times V$ restricting to the given smoothings on the boundary components, such that projection to $I=[0,1]$ is a smooth submersion. The levels $V_t$, $t\in I$, then comprise a 1-parameter family of smoothings of $V$, i.e.,\ smooth manifolds homeomorphically identified with $V$. An isotopy of a codimension-0 topological embedding $V\to X$ can be interpreted as a level-preserving topological embedding $I\times V\to I\times X$, and so pulls back the product smoothing to a sliced concordance. When $\dim V\ne 4$, sliced concordant smoothings are related by a diffeomorphism (that is $C^0$-small isotopic to the identity through homeomorphisms \cite{KS}); in particular, topologically isotopic, codimension-0 embeddings pull back diffeomorphic smoothings. However, in dimension 4, such isotopies frequently realize uncountably many diffeomorphism types. For general open 4-manifolds, sliced concordance classes are classified by $H^3(V;\Z_2)$, which is $\Z_2$ in our case of interest \cite{KS}. (These are also the same as stable isotopy classes, where we allow isotopy through homeomorphisms after product with $\R$.) A sliced concordance identifies the sets $\J(V_0)$ and $\J(V_1)$. (Use the bundle of tangents to the slices $V_t$ rather than stabilizing to a 6-dimensional bundle, which would lose information.) For $V$ homotopy equivalent to $M$, we show that the invariants $\Gamma$ and $\tilde{\Theta}$ are preserved by sliced concordance (Theorem~\ref{class}), so that the identification is independent of the choice of sliced concordance (Corollary~\ref{sliced}), and hence, of topological isotopy. While $\J$ is clearly functorial for diffeomorphisms, we ultimately conclude that it is functorial for homeomorphisms, at least for manifolds homeomorphic to $\R\times M$ (Theorem~\ref{Jfunctor}). The proofs exploit a canonical $\Z$-action on $\J(V)$ that respects all of the above structure.

We examine sliced concordance classes of smoothings of $\R\times M$ in more detail in Section~\ref{Ends}. The Kirby--Siebenmann uniqueness obstruction distinguishes the two sliced concordance classes, but is not in general a canonical map to $\Z/2$, merely a torsor. However, for many 3-manifolds, the two classes are canonically distinguished by the existence of a product smoothing, and no diffeomorphism relates smoothings in different classes:

\begin{thm}\label{RxM}
If $M$ is hyperbolic, Haken or a $\Z/2$-homology sphere, then diffeomorphic smoothings of $\R\times  M$ must be sliced concordant. In the first two cases, any smoothing diffeomorphic to some $\R\times M'$ is sliced concordant to the standard smoothing of $\R\times  M$.
\end{thm}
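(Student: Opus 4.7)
The plan is to reformulate Theorem~\ref{RxM} as a claim about self-homeomorphisms of $\R\times M$, then handle each class of 3-manifolds using its characteristic rigidity property. Sliced concordance classes of smoothings of $\R\times M$ form a torsor over $H^3(\R\times M;\Z_2)\cong H^3(M;\Z_2)\cong\Z_2$, so there are exactly two classes. Viewing a smoothing $V_i$ as a smooth atlas $\Sigma_i$ on $\R\times M$, a diffeomorphism $V_0\to V_1$ is equivalent to a self-homeomorphism $f$ of $\R\times M$ with $f^*\Sigma_1=\Sigma_0$. Writing $\Delta(\Sigma)\in H^3(\R\times M;\Z_2)$ for the sliced concordance difference of $\Sigma$ from the product atlas $\Sigma_s$, the standard cocycle relation combined with the trivial action of $f^*$ on $\Z_2$ yields $\Delta(\Sigma_0)=\Delta(\Sigma_1)+\Delta(f^*\Sigma_s)$. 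Thus the first assertion reduces to showing that every self-homeomorphism $f$ of $\R\times M$ satisfies $\Delta(f^*\Sigma_s)=0$, i.e., $f$ preserves the sliced concordance class of the product smoothing.

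For hyperbolic or Haken $M$, the plan is to show that every self-homeomorphism $f$ of $\R\times M$ is isotopic through homeomorphisms to a diffeomorphism of the standardly smoothed $\R\times M$. Any such isotopy through homeomorphisms carries $\Sigma_s$ through a one-parameter family, producing a sliced concordance between $f^*\Sigma_s$ and the pullback of $\Sigma_s$ by the terminal diffeomorphism---which is just $\Sigma_s$ itself. I would build the isotopy in two steps. First, use the tame end structure of $\R\times M$ (each end is homeomorphic to $[0,\infty)\times M$) to properly isotope $f$ to a product $\eta\times f_0$ with $\eta\co\R\to\R$ and $f_0\co M\to M$ self-homeomorphisms. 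Second, apply Mostow rigidity (hyperbolic case) or Waldhausen's theorem on self-homeomorphisms of Haken manifolds to make $f_0$ isotopic to a self-diffeomorphism of $M$, while $\eta$ is trivially isotopic to a self-diffeomorphism of $\R$. For the second assertion, any smoothing $W$ diffeomorphic to $\R\times M'$ gives a homeomorphism $\R\times M\cong\R\times M'$, and the end structures together with the rigidity of hyperbolic or Haken 3-manifolds force $M\cong M'$; the underlying smooth manifold of $W$ is then the standard $\R\times M$, and the first assertion applies.

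For the $\Z/2$-homology sphere case, such rigidity of $M$ is unavailable, so I would instead look for a diffeomorphism invariant of the smooth 4-manifold $V$ that detects the sliced concordance class. Since $H^1(M;\Z_2)=0$, both $M$ and $V$ carry unique spin structures. A natural approach is to examine compact spin cobordisms $N\subset V$ bounded by smooth level sets of a proper Morse function $\phi\co V\to\R$ whose levels near infinity are smooth copies of $M$: capping two such levels by a fixed smooth spin 4-manifold $Y$ bounding $M$ produces a closed smooth spin 4-manifold, and one extracts a $\Z_2$-valued residue using Rokhlin-type signature data for $M$. The content of the case would then be that some such construction---carefully set up to be independent of $\phi$, the level sets, and the cap $Y$---computes the Kirby--Siebenmann uniqueness class of $V$ and is manifestly carried to itself under any diffeomorphism $V_0\to V_1$ (which sends spin cobordisms between level sets to spin cobordisms of the same type).

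The main obstacle is the $\Z/2$-homology sphere case: pinning down the correct spin invariant and verifying rigorously that it both computes the Kirby--Siebenmann class and is independent of all the choices made in its construction. The hyperbolic and Haken cases should be more routine, though the initial end-straightening step---properly isotoping an arbitrary self-homeomorphism of $\R\times M$ into product form---will require some technical care with the topology of the ends.
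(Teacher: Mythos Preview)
Your end-straightening step in the hyperbolic/Haken case is a genuine gap. You want to topologically isotope an arbitrary self-homeomorphism $f$ of $\R\times M$ to a product $\eta\times f_0$. This amounts to showing that the bicollared copy $f(\{0\}\times M)$ is topologically isotopic to a standard slice, which in turn requires that the topological h-cobordism between them is a product. In dimension~4 the topological h-cobordism theorem is only known for fundamental groups that are ``good'' in the Freedman--Quinn sense, and hyperbolic or Haken 3-manifold groups are not known to be good in general. A homotopy of $f$ to a diffeomorphism (which is all Mostow/Waldhausen gives you) does not by itself produce a sliced concordance---you really do need an isotopy through homeomorphisms, and that is not available.

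The paper avoids this entirely by using the Rohlin-type invariant $\mu(V,s)$ uniformly across all three cases, not just for $\Z/2$-homology spheres. The role of Mostow/Waldhausen is only to guarantee that $f$ is \emph{homotopic} to a diffeomorphism, which is enough to show that $f$ preserves each spin structure $s$; then diffeomorphism-invariance of $\mu$ gives $\mu(V,s)=\mu(\R\times M,s)$, and Proposition~\ref{muplus8} (that the two sliced concordance classes differ by exactly $8$ in $\mu$) finishes. In the $\Z/2$-homology sphere case there is a unique spin structure, so the rigidity input is not needed at all. Your sketch for that case is essentially correct and matches the paper; the point you missed is that the same invariant handles the other cases too, and more cheaply than the isotopy argument you proposed.
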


\noindent We prove this after Proposition~\ref{muplus8}. The simplest 3-manifolds violating these hypotheses are the even lens spaces; we show (Proposition~\ref{lens}) that most of these also satisfy the first conclusion. Then we prove an application to more general 4-manifold smoothing theory.

\begin{cor}\label{end}
Suppose an end of a 4-manifold is homeomorphic to $\R\times M$ with $M$ hyperbolic, Haken or $S^3$. Then there is a canonical $\Z/2$-invariant distinguishing the two sliced concordance classes of smoothings of the end, with value 0 on each product smoothing. If $M$ is a $\Z/2$-homology sphere, such a $\Z/2$-invariant is established by fixing the choice of $M$ (among 3-manifolds $M'$ for which the end is homeomorphic to $\R\times M'$).
\end{cor}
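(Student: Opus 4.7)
The plan is to exploit the identification of sliced concordance classes on $E$ with a $\Z/2$-torsor (from the Kirby--Siebenmann classification and $H^3(E;\Z_2)\cong H^3(M;\Z_2)\cong\Z/2$) and to single out a canonical zero by showing that all product smoothings of $E$ lie in a single sliced concordance class. The core of the argument is to transport the question from $E$ to $\R\times M$ via a fixed homeomorphism and then apply Theorem~\ref{RxM}.

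I would fix an arbitrary homeomorphism $h\co\R\times M\to E$, which induces a bijection of $\Z/2$-torsors from sliced concordance classes of smoothings of $E$ to those of $\R\times M$ by pullback. Let $\Sigma$ be any product smoothing of $E$, realized by a diffeomorphism $h'\co\R\times M'\to(E,\Sigma)$ from the standard smoothing on some $\R\times M'$. Then $h^*\Sigma=((h')^{-1}\circ h)^*(\text{standard})$, so $h^*\Sigma$ is diffeomorphic to $\R\times M'$ as a smooth 4-manifold via $(h')^{-1}\circ h$. For $M$ hyperbolic or Haken, the second conclusion of Theorem~\ref{RxM} applies directly to give that $h^*\Sigma$ is sliced concordant to the standard smoothing on $\R\times M$. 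For $M=S^3$, I would first invoke the Poincar\'e conjecture: $\pi_1(M')=\pi_1(E)=1$ forces $M'\cong S^3$, so $h^*\Sigma$ is diffeomorphic to the standard $\R\times S^3$, and the first conclusion of Theorem~\ref{RxM} (noting that $S^3$ is a $\Z/2$-homology sphere) yields the desired sliced concordance. In all three cases, every product smoothing of $E$ pulls back via $h$ to the standard sliced concordance class on $\R\times M$, so the product smoothings all lie in the single sliced concordance class of $E$ that is the preimage of the standard class under $h^*$. This preimage is intrinsic to $E$, so it is independent of $h$; declaring it to be zero produces the desired canonical $\Z/2$-invariant.

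For the statement about $\Z/2$-homology spheres, I would repeat the argument with the choice of $M$ fixed at the outset, defining a \emph{product smoothing of $E$ with respect to $M$} as any $\Sigma$ admitting a diffeomorphism $\R\times M\to(E,\Sigma)$ from the standard smoothing. Now the first conclusion of Theorem~\ref{RxM} alone shows that any pullback $h^*\Sigma$ is diffeomorphic, hence sliced concordant, to the standard smoothing of $\R\times M$, so all such $\Sigma$ again lie in a single sliced concordance class of $E$. Without the second conclusion of Theorem~\ref{RxM}, however, this class can in principle differ when a different $M$ is chosen, which is why the resulting invariant depends on the choice of $M$.

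The main obstacle is the $S^3$ case, where one needs to know that $M'\cong S^3$ for every 3-manifold $M'$ with $\R\times M'\cong E$; this is the one input beyond Theorem~\ref{RxM}, supplied by the Poincar\'e conjecture. Everything else is formal torsor-theoretic bookkeeping: a bijection between two-element sets is determined by the image of a single element, so verifying where the product smoothings go is enough to pin down the zero class unambiguously.
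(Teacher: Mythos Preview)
Your proof is correct and follows essentially the same strategy as the paper: both show that all product smoothings land in a single sliced concordance class by reducing to Theorem~\ref{RxM} (the paper by rerunning its Rohlin-invariant comparison, you by invoking the theorem directly; your explicit appeal to the Poincar\'e conjecture for $M=S^3$ is the same step the paper hides in ``as before''). The one point the paper makes explicit that you gloss over is that a smoothing of the \emph{end} is a germ, so two product structures may live on different, incomparable neighborhoods rather than globally on your fixed $E$; the paper flags this as ``the only subtlety'' and handles it by embedding a slice of one product into the other's $\R\times M$, while your argument goes through verbatim after first passing to a common sub-neighborhood.
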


\noindent The author knows no examples of 3-manifolds for which these conclusions fail. However, the first conclusion of the theorem fails for more general 4-manifolds. For example, while it holds for a twice-punctured 4-sphere $\R\times S^3$, it fails after a third puncture: If $V$ is taken from the nonstandard sliced concordance class on $\R\times S^3$, then $V-\{ p\}$ has a unique end admitting a smooth product structure (distinguished by the above invariant). But homeomorphisms act transitively on the three ends, pulling back the given smoothing into different sliced concordance classes. Combining these results with previous sections, we show (Theorem~\ref{holom}, cf.\ Corollary~\ref{2hCplx}) that every sliced concordance class and almost-complex structure on $\R\times M$ are realized holomorphically together, by an embedding into a closed, complex surface. For the class of the standard smoothing, the embedding arises as a boundary collar of a TPC 2-handlebody. For the other class, the  embedded $M$ cannot even cut out a smoothable 4-manifold. It is an interesting question whether it can ever be TPC. Other related questions and a conjecture also appear in Section~\ref{Ends}.

Except where otherwise specified, we use the following conventions: Manifolds are smooth and oriented, and local homeomorphisms and diffeomorphisms preserve orientation. Handlebodies are compact. Homology and cohomology have integer coefficients. For any 4-manifold $V$, $\J(V)$ denotes the set of homotopy classes of almost-complex structures on $V$ (implicitly respecting the given orientation on $V$). We canonically identify $\J(\R\times M)$ with $\J(M)$, the set of homotopy classes of oriented plane fields on $M$.

\section{Constructing TPC maps}\label{TPC}%%%%%%%%%%%%%%%%%%%%

In this section, we construct TPC embeddings and immersions of any (closed, oriented) 3-manifold $M$, realizing each $J\in\J(M)$. We use a simply stated TPC embedding theorem from \cite{steintop}, along with several tool theorems for tracking almost-complex structures that we prove in Section~\ref{Conc}.

\subsection{TPC embeddings} %%%%%%%%%%

We begin with the embedding theorem from \cite{steintop}.

\begin{de}\label{TPCdef}
 A compact topological 4-manifold topologically embedded with bicollared boundary in a complex surface is {\em topologically pseudoconvex (TPC)} if it has a Stein neighborhood system. A topological embedding (resp.\ immersion) $f\co M\to X$ of a 3-manifold into a complex surface will be called {\em TPC} if there is a TPC 4-manifold $Y$ in some complex surface $X'$ and a homeomorphic identification of $M$ with $\partial Y$ so that $f$ extends to a holomorphic embedding (resp.\ immersion) of some neighborhood of $\partial Y$ in $X'$.
 \end{de}
 
 The TPC 4-manifolds that we construct will always come endowed with the following additional structure. Let $\Sigma$ denote the intersection of the standard Cantor set with the open interval $(0,1)$.

\begin{de}\label{oniondef}
A {\em Stein onion} consists of a closed 3-manifold $M$, a Stein surface $U$ and a continuous surjection $\psi\co[0,1) \times M \to U$ restricting to a homeomorphic embedding on $(0,1)\times M$ such that for each $\sigma\in\Sigma$, the open  subset $\psi([0,\sigma) \times M)$ is Stein.
\end{de}

\noindent Thus, $\psi$ presents $U$ as an open mapping cylinder exhibiting the {\em core} $\psi(0)$ as a deformation retract of $U$, so the core has the homotopy type of a 2-complex. Consider the uncountable subset $\Sigma'\subset\Sigma$ obtained by deleting the countably many boundaries of open intervals comprising $(0,1)-\Sigma$. For each $\sigma\in\Sigma'$, the subset $Y_\sigma=\psi([0,\sigma]\times M)$ is a TPC 4-manifold that is a nested intersection of uncountably many topologically isotopic Stein surfaces of the form $\inter Y_\tau$ with $\tau\in\Sigma$. (A similar notion of convexity applies to the core. In particular, the core and each such $Y_\sigma$ are Stein compacts.) Furthermore, $\inter Y_\sigma$ is an uncountable nested union of topologically isotopic TPC 4-manifolds. The levels $\partial Y_\sigma$ with $\sigma\in\Sigma'$ comprise an uncountable, topologically parallel collection of TPC embeddings of $M$. This definition of a Stein onion is weaker than the one in \cite{steintop} (compact case), which presented the closure of $U$ in a complex surface as a topologically embedded 2-handlebody, with its core 2-complex given by $\psi(0)$ and smoothly embedded except for one point on each 2-cell (where it is topologically tame but not smoothly conelike). The main theorem of \cite{steintop} gives a simple and powerful method for locating Stein onions in complex surfaces:

\begin{thm}\label{onion}\cite[Theorem~1.4]{steintop}
Every topological embedding of a 2-handlebody $Y$ into a complex surface is topologically isotopic to an embedding with image $Y_\sigma$ in some holomorphically embedded Stein onion.
\end{thm}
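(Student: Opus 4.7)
The plan is to proceed in three stages: first, put the topologically embedded 2-handlebody $Y\subset X$ into a nearly smooth position relative to its handle decomposition; second, invoke Eliashberg's Stein handlebody construction to produce Stein neighborhoods of the core 2-complex; and third, iterate and interpolate to assemble the Cantor-indexed family required by the Stein onion.

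For the first stage, I would fix a handle decomposition of $Y$ with handles of index $\le 2$, and apply Quinn-Freedman topological engulfing/straightening techniques to topologically isotope the embedding so that the 0- and 1-handle skeleton is smoothly embedded and the 2-handle attaching circles are smooth, with each 2-cell smoothly embedded away from a single interior point where only topological tameness (not smooth conicality) holds. This is the form of the core described just before the theorem's statement in the excerpt, and it is the strongest smooth regularity one can expect in full generality, since 2-handle cores in the topological category may be genuinely unsmoothable.

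For the second stage, I would apply Eliashberg's construction to produce a smooth Stein surface whose handle decomposition matches $Y$'s: after suitable Legendrian realization of the attaching circles with framing $\mathrm{tb}-1$ (achieved by stabilizations, which changes the smooth but not the topological isotopy class of the handle attachment), Eliashberg's theorem yields a Stein surface structure on $Y$. To transplant this into $X$, one uses the almost-smoothness from stage one to identify a smooth neighborhood of the skeleton in $X$ with the corresponding subset of Eliashberg's model; the remaining topological bad points on the 2-cells must be enveloped by small Stein neighborhoods glued using the topological tameness. The delicate point here—the main obstacle—is that the Stein condition is holomorphic and does not survive arbitrary topological perturbations, so one must simultaneously achieve the plurisubharmonic exhaustion function and the correct contact-type boundary behavior in topological coordinates around each bad point.

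For the third stage, I would construct the Cantor-parameterized family by a recursive bisection argument: given any pair of nested Stein neighborhoods produced by the previous stage, one inserts a topologically isotopic Stein neighborhood strictly between them (using the flexibility in Eliashberg's construction applied to a slightly thickened core). Iterating this insertion dyadically produces a family indexed by dyadic rationals, whose closure in the Hausdorff topology yields the desired Cantor-indexed family $\{Y_\sigma\}_{\sigma\in\Sigma'}$. The bicollared, level-preserving structure of $\psi\co[0,1)\times M \to U$ then arises from the continuity of this nested family together with topological bicollar uniqueness (Brown's theorem), ensuring that the boundaries $\partial Y_\sigma$ foliate a half-open collar of the core. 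Verifying continuity of $\psi$ at the accumulation points of $\Sigma$—where Stein-ness need not hold but topological parallelism must—is the technical payoff of having controlled isotopies throughout the recursion.
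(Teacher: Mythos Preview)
This theorem is not proved in the present paper; it is quoted from \cite{steintop}. The paper offers only a one-sentence sketch: the proof ``combines fundamental work of Eliashberg \cite{CE} and Freedman \cite{F}, \cite{FQ} to isotope $Y$ to the closure of a Stein onion with $\psi$ respecting the handle structure, so that the 2-handles of each $Y_\sigma$ with $\sigma\in\Sigma$ have well-controlled (infinite) differential topology (as generalized Casson handles).'' So there is no detailed argument here against which to check your proposal line by line, but that sentence already names the central mechanism your outline is missing.

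Your stage two contains the real gap. You propose to handle the unsmoothable point on each 2-cell by ``enveloping'' it with a small Stein neighborhood glued in via topological tameness. That is not a workable strategy: there is no local holomorphic or Stein model for a topologically tame but smoothly wild point on a 2-disk, and Eliashberg's method needs a genuine smooth Legendrian attaching circle and a smooth 2-handle core. The actual mechanism is global, not local: one replaces each topological 2-handle by a \emph{generalized Casson handle}---an infinite tower of smoothly immersed disks (kinky handles) whose end compactification is homeomorphic to a standard 2-handle by Freedman's theorem. Eliashberg's Legendrian-handle technique is then applied recursively, stage by stage, down the tower, so that every finite truncation is Stein. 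The single ``bad point'' you mention is precisely the limit point of this infinite tower; it is not patched, it is produced by the construction.

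Your stage three is closer in spirit, but the Cantor indexing is not obtained by an independent bisection afterward. It arises together with the Casson-handle recursion: the tree structure of the tower and the freedom at each stage naturally yield a Cantor set's worth of nested Stein sublevels. Building the nested family after the fact by ``inserting a Stein neighborhood strictly between'' two given ones would require re-running the entire infinite construction each time, with no guarantee of convergence or compatibility.
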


\noindent The proof combines fundamental work of Eliashberg \cite{CE} and Freedman \cite{F}, \cite{FQ} to isotope $Y$ to the closure of a Stein onion with $\psi$ respecting the handle structure, so that the 2-handles of each $Y_\sigma$ with $\sigma\in\Sigma$ have well-controlled (infinite) differential topology (as generalized Casson handles). Such a Stein onion can typically be arranged to realize infinitely many (often uncountably many) diffeomorphism types of Stein surfaces $\inter Y_\sigma$, $\sigma\in\Sigma$ \cite[Section~5]{steintop}. All of our TPC 4-manifolds arise from this theorem, so automatically inherit all of the above structure, and so our TPC 3-manifolds occur in uncountable, topologically parallel families exhibited by bicollars.

\subsection{TPC maps realizing a fixed almost-complex structure}%%%%%%

We next wish to state the main theorem of Section~\ref{TPC} and derive Theorem~\ref{main0}. This requires some definitions and a theorem from Section~\ref{Conc}. For every bicollared topological embedding of $M$ into a smooth 4-manifold $X$, the bicollar $\R\times M\hookrightarrow X$ is unique up to topological isotopy \cite{FQ}, so the induced smoothing on $\R\times M$ is unique up to sliced concordance. The same applies to topological immersions if we assume an immersed bicollar and isotope in its domain. Every topological 4-manifold $Y$ with boundary $M$ has uniquely collared boundary, determining a unique isotopy class of bicollared embeddings $M\hookrightarrow\inter Y$. Any smoothing on $\inter Y$ then determines a unique sliced concordance class of smoothings on $\R\times M$, which we call {\em boundary collar smoothings}. For example, if $Y$ is created by Theorem~\ref{onion}, $\partial Y$ may not be smoothable by any isotopy, but $\inter Y$ inherits a smoothing. This induces boundary collar smoothings, defined by pulling the smoothing of $\inter Y$ back by $\psi$ (Definition~\ref{oniondef}) to some $(a,b)\times M$ (identified with $\R\times M$ by some orientation-preserving homeomorphism $\R\approx (a,b)$). If $X$ is a complex surface, every smoothing $V$ on $\R\times M$ induced by a bicollared immersion inherits a complex structure. We will show (Theorem~\ref{Jfunctor}) that $\J$ is functorial with respect to homeomorphisms, so $\J(V)\cong\J(\R\times M)$ is well-defined on the underlying topological manifold. For now, the following special case of Corollary~\ref{sliced} suffices:

\begin{thm}\label{sliced0}
The set $\J(V)$ is well-defined on sliced concordance classes of smoothings $V$ of $\R\times M$.
\end{thm}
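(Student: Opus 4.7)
The plan is to produce, for each sliced concordance $W$ between smoothings $V_0$ and $V_1$ of $\R\times M$, a canonical bijection $\phi_W\co\J(V_0)\to\J(V_1)$, and then to show $\phi_W$ depends only on the sliced concordance class of its endpoints. The bijection comes from the bundle of vertical (slicewise) tangents $T^vW\to W$, which exists as a rank-4 smooth bundle because $W\to I$ is a smooth submersion, and restricts on each slice $V_t$ to $TV_t$. A homotopy class of orientation-compatible almost-complex structures on $V_t$ is the same as a homotopy class of fiberwise complex structures on $T^vW|_{V_t}$, i.e., a homotopy class of sections of the associated bundle with fiber $\SO(4)/\U(2)\cong S^2$. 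Since $W$ is topologically $I\times V_0$, each inclusion $V_i\hookrightarrow W$ is a homotopy equivalence, and restriction of sections from $W$ to either boundary slice is a bijection on homotopy classes; composing these two bijections defines $\phi_W$, which concretely extends $J_0$ as a fiberwise complex structure over $T^vW$ and restricts to $V_1$. It is essential to use the rank-4 vertical bundle here rather than the full tangent bundle $TW$, which would forget the additional $S^2$-worth of data picked up by the slice direction.

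For independence of $W$, I would appeal to the classifying invariants $\Gamma(\cdot,s)$ and $\tilde\Theta$ constructed in Section~\ref{Conc}. These refine Pontryagin's primary and secondary obstructions and together classify $\J(V)$ for any 4-manifold $V$ homotopy equivalent to $M$; moreover, their values are read from data determined by $M$ together with auxiliary choices (a spin structure $s$ and, when $c_1$ has infinite order, a framing) that are supplied intrinsically rather than by the smoothing. Theorem~\ref{class} will assert that sliced concordance preserves both invariants, i.e., $\phi_W$ intertwines $(\Gamma,\tilde\Theta)$ on $V_0$ and on $V_1$. Since the pair $(\Gamma,\tilde\Theta)$ is a complete invariant of $\J$ on such manifolds, $\phi_W$ is the unique bijection respecting it, and so does not depend on the choice of $W$.

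The only substantive step is Theorem~\ref{class}; granted it and the completeness of $(\Gamma,\tilde\Theta)$, Theorem~\ref{sliced0} follows formally. I expect the real work in Theorem~\ref{class} to lie in expressing $\tilde\Theta$ — whose definition involves the discrepancy between $c_1^2$ and $2\chi+3\sigma$ on a bounding 4-manifold — in terms of characteristic-class data that pulls back naturally across the vertical bundle of a concordance, so that its value on $V_0$ and on $V_1$ can be compared without invoking any diffeomorphism relating the two smoothings. The delicacy of $\Gamma$ in the presence of 2-torsion and of $\tilde\Theta$ when $c_1$ has infinite order, flagged in the introduction, warns that this comparison will require some care.
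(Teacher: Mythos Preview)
Your proposal is correct and follows essentially the same approach as the paper: use the vertical tangent bundle of a sliced concordance to define the bijection (Proposition~\ref{action}), then invoke the invariance of $\Gamma$ and $\tilde\Theta$ under sliced concordance (Theorem~\ref{class}) together with their completeness to conclude independence of the choice of concordance (Corollary~\ref{sliced}). The paper treats Theorem~\ref{sliced0} as immediate from Corollary~\ref{sliced}, just as you outline.
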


\begin{de}
For $J\in\J(M)=\J(\R\times M)$, a {\em topological embedding} (resp.\ {\em immersion}) of the pair $(M,J)$ in a complex surface is a bicollared topological embedding (resp.\ immersion) of $M$ for which the induced smoothing $V$ of $\R\times M$ is sliced concordant to the standard smoothing, with $J\in\J(\R\times M)\cong\J(V)$ the inherited complex structure on $V$. It will be called {\em TPC} if the map on $M$ is TPC.
\end{de}

The main theorem of Section~\ref{TPC} generalizes Theorem~\ref{main0}, giving broad conditions under which a given pair $(M,J)$ has a TPC embedding or immersion.

\begin{thm}\label{main} Given a closed, oriented 3-manifold $M$ and $J\in\J(M)$,
\begin{itemize}
\item[a)] $(M,J)$ has a TPC immersion in a given complex surface $X$ whenever there is an $\alpha\in\pi_2(X)$ for which $\langle c_1(X),\alpha\rangle$ is a factor of $c_1(J)$.
\item[b)] There is an integer $n_M$ depending only on $M$ such that $(M,J)$ has a TPC embedding in every closed, simply connected complex surface $X$ with $b_\pm(X)\ge n_M+2(\di c_1(X))^2$ and $\di c_1(X)$ a factor of $c_1(J)$. If $X$ is spin, the inequality can be weakened to $b_\pm(X)\ge n_M+\frac12(\di c_1(X))^2$.
\end{itemize}
Let $m$ denote $\langle c_1(X),\alpha\rangle$ in (a) or $\di c_1(X)$ in (b). If $X$ is spin, then the resulting immersion or embedding can be chosen to realize any given spin structure on $M$, unless $H_1(M)$ has 2-torsion and $m$ is divisible by 4.
\end{thm}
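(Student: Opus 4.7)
The strategy is to reduce both statements to a construction problem for 2-handlebodies and then apply Theorem~\ref{onion}. Concretely, for each of (a) and (b) I would aim to produce a topologically embedded (case (b)) or topologically immersed (case (a)) 2-handlebody $Y\subset X$ with $\partial Y$ homeomorphic to $M$ in such a way that the ambient complex structure, restricted to a bicollar $\R\times M\subset X$, represents the prescribed class $J\in\J(M)=\J(\R\times M)$. Theorem~\ref{onion} then isotopes $Y$ to the closure of a Stein onion with boundary $M$; since the isotopy preserves the sliced concordance class of the bicollar, and hence by Theorem~\ref{sliced0} preserves the identification of $\J$ carrying the inherited almost-complex structure, the resulting TPC map realizes $(M,J)$ as required.

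Every closed oriented 3-manifold bounds a 2-handlebody, so the plan begins by fixing a smooth model $Y_0$ with $\partial Y_0=M$; its handle data determine the intrinsic constant $n_M$. For (a), represent $\alpha\in\pi_2(X)$ by a smoothly immersed 2-sphere $S\subset X$. Repeated boundary connect-sums of $Y_0$ with plumbings of regular neighborhoods of $S$ produce an immersed 2-handlebody $Y\subset X$ with $\partial Y\cong M$; each $S$-summand shifts the first Chern class of the induced $J$ by a multiple of $m=\langle c_1(X),\alpha\rangle$, while auxiliary $S^2\times S^2$ summands and framing changes on the 2-handles of $Y_0$ adjust the secondary invariant $\tilde\Theta$ of Section~\ref{Conc}. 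The factor hypothesis $m\mid c_1(J)$ is exactly what is needed to land in the correct coset of $c_1$, and the residual freedom in framings and auxiliary summands matches $\tilde\Theta$ modulo $\di c_1(J)$; combined with a spin-structure choice and the bijection with spin$^\C$-structures defining $\Gamma$, this pins down $J$ up to homotopy. Part (b) proceeds the same way inside a closed, simply connected $X$: the handlebody $Y_0$ embeds topologically by standard surgery-theoretic results once $b_\pm(X)$ is sufficiently large, the role of $S$ is played by a holomorphically embedded sphere of Chern number $\di c_1(X)=m$ (available thanks to the divisibility hypothesis), and the bookkeeping of $\CP^2$ or $\overline{\CP}{}^2$ summands, each costing $\pm 1$ in $b_\pm$, accounts for the bound $n_M+2m^2$. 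In the spin setting, $S^2\times S^2$ and $E_8$ blocks replace the $\CP^2$ summands, halving the cost and giving $n_M+\tfrac12 m^2$.

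The spin-structure refinement is governed by the mod-2 reduction of $\Gamma$, which is determined by the pullback of the ambient spin structure under the bicollar. An $m$-shift operation alters this reduction unless $m\equiv 0\pmod 4$, so when $H_1(M)$ has no 2-torsion every spin structure on $M$ is reachable, and when 2-torsion is present this fails precisely when $4\mid m$, as claimed. The main obstacle, and the content that the tool theorems of Section~\ref{Conc} must supply, is the explicit calculation of how each of the geometric moves above — connect-sum with an $m$-sphere, framing change on a 2-handle, and summing on an $S^2\times S^2$, $E_8$, $\CP^2$ or $\overline{\CP}{}^2$ block — acts on the pair $(\Gamma,\tilde\Theta)$ and on the induced spin structure. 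Once these effects are tabulated, a straightforward combinatorial argument shows that the moves generate the full admissible set of invariants within the stated $b_\pm$ budget, completing the proof.
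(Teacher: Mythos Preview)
Your overall architecture is right: build a 2-handlebody with boundary $M$, arrange the induced $(\Gamma,\tilde\Theta)$ to match $J$ via Theorems~\ref{Ann} and~\ref{Y}, then invoke Theorem~\ref{onion}. But your execution of~(b) has a genuine gap. You assume that the divisibility hypothesis supplies a ``holomorphically embedded sphere of Chern number $m=\di c_1(X)$'' inside $X$. No such sphere need exist: a smooth hypersurface of degree $d\ge 5$ in $\CP^3$ contains no holomorphic rational curves at all, yet these surfaces are exactly the sort of $X$ the theorem is meant to cover. Likewise, the handwave ``$Y_0$ embeds topologically by standard surgery-theoretic results'' hides the real issue: one must embed $Y_0$ so that $c_1(X)|Y_0$ is twice a prescribed class in $H^2(Y_0)$, and no off-the-shelf surgery statement delivers this. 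The paper's argument for~(b) is fundamentally different: it constructs a smooth model $Z\supset Y$ with a characteristic class $c$ having the same square and divisibility as $c_1(X)$, then invokes Freedman's classification to produce a homeomorphism $Z\# Z'\approx X$ and Wall's transitivity on characteristic elements to align $c$ with $c_1(X)$. The $b_\pm$ bounds arise from the bookkeeping in this matching of forms, not from stabilizing an embedding.

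Your sketch of~(a) also needs repair. ``Boundary connect-sum of $Y_0$ with plumbings of neighborhoods of $S$'' would change $\partial Y_0$ away from $M$, and in any case you never explain how $Y_0$ itself gets into $X$. The paper avoids this by working abstractly: since $Y$ (a spin 2-handlebody with no 1-handles) is homotopy equivalent to a wedge of spheres, one first builds a continuous map $f\co Y\to X$ sending each wedge sphere to a multiple of $\alpha$ so that $f^*c_1(X)=2\gamma$, then promotes $f$ to a generic immersion and corrects normal Euler numbers by adding double points. Theorem~\ref{onion} is then applied to $Y$ inside itself with the pulled-back complex structure $J_Y=f^*J_X$; pushing the resulting Stein onion forward by $f$ gives the TPC immersion. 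Your description of the spin refinement is also slightly off: the point is that when $4\mid m$ one can only hit those $s$ for which $\Gamma(J,s)$ admits $m/2$ as a factor, whereas when $4\nmid m$ a short argument in $H^2(M)/(2r+1)H^2(M)$ shows every $s$ works.
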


\noindent In particular, for fixed $M$, every homotopy class $J$ has a TPC embedding in every nonminimal, closed, simply connected $X$ with $b_\pm(X)\ge n_M+2$. For minimal examples satisfying the inequalities in (b), consider the hypersurface $X_d$ of degree $d$ in $\C P^3$. This has $\di c_1(X_d)=|d-4|$, but the numbers $b_\pm(X_d)$ increase cubically with $d$. It is spin whenever $d$ is even. Complete intersections in $\C P^n$ provide other such families. For a given $M$, the proof gives a computable $n_M$. The divisibility hypothesis in (b) is clearly necessary. The dependence of the bounds in (b) on $\di c_1(X)$ and the final caveat on 2-torsion and $m$ for both (a) and (b) are also necesssary; see Remark~\ref{necHyps}(a). Note that it makes sense to pull a spin structure back to $M$, since spin structures can be interpreted on their underlying topological manifolds (in this case $\R\times M$), cf.~Theorem~\ref{spinc}.

\begin{proof}[Proof of Theorem~\ref{main0}]
Part (a) follows immediately from (b) above since $c_1(J)$ has only finitely many factors when it has infinite order. The ``only if" direction of (b) is easy. For the converse, note that $H^2(X)$ is torsion-free, so we can factor $c_1(X)$ as $(\di c_1(X))\beta$ for a primitive class $\beta$. By Poincar\'e duality, there is a dual class $\alpha\in H_2(X)$ with $\langle\beta,\alpha\rangle=1$. Since $X$ is simply connected, $\pi_2(X)$ can be identified with $H_2(X)$, so we can apply (a) above. To prove (c), note that a nonminimal or ruled surface $X$ contains a holomorphically embedded sphere $\alpha$ of square $-1$ or 0, so $\langle c_1(X),\alpha\rangle=1$ or 2 by the adjunction formula. (It would also suffice for $X$ to have a holomorphic sphere of square $-3$ or $-4$.) Since $c_1(J)$ reduces mod 2 to $w_2(M)=0$, it has 2 as a factor. Part (d) follows immediately from (a) above by letting $\alpha=0$ (or any other class).
\end{proof}

\subsection{Proof of Theorem~\ref{main}}%%%%%
 The proof requires two additional theorems that, along with Theorem~\ref{sliced0}, are proved after Corollary~\ref{sliced} from stronger results in Section~\ref{Conc}. First, we need to understand the set $\J(M)$. This was studied in \cite{Ann} in the context of plane fields, and classified by two invariants:

\begin{thm}\label{Ann}\cite{Ann}
There is a canonical $\Z$-action on $\J(M)$, with structure determined by two invariants:
\begin{itemize}
\item[(a)] The set of orbits is identified with $H^2(M;\Z)$ by an invariant $\Gamma$ that depends on a choice of spin structure $s$ on $M$. For fixed $J$, $\Gamma(J,s)$ ranges over all classes with $2\Gamma(J,s)=c_1(J)$.
\item[(b)] The orbit containing a given $J$ is isomorphic as a $\Z$-space to $\Z/\di c_1(J)$. There is an invariant $\tilde\Theta$ that, for a fixed $s$ and (normally) framed 1-manifold Poincar\'e dual to $\Gamma(J,s)$, equivariantly identifies this orbit with an index-4 coset of $\Z/4\di c_1(J)$.
\end{itemize}
\end{thm}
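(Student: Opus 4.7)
The plan is to reduce to Pontryagin's 1941 obstruction-theoretic classification of maps $M^3\to S^2$ and then repackage his primary and secondary obstructions as the geometric invariants $\Gamma$ and $\tilde\Theta$. Since every closed, oriented 3-manifold is parallelizable, a trivialization of $TM$ identifies the set of oriented 2-plane fields up to homotopy with $[M,S^2]=[M,SO(3)/U(1)]$. Pontryagin then provides a primary obstruction valued in $H^2(M)$ and, once that is fixed, a secondary obstruction in a quotient of $H^3(M)=\Z$ that works out to $\Z/\di c_1(\xi)$. This already realizes the orbit structure abstractly; the substantive content of the theorem is to make both invariants canonical and geometrically computable, independent of the trivialization.

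Next I would define $\Gamma$ intrinsically. An oriented plane field $\xi$ on $M$ determines a spin$^{\C}$-structure $\mathfrak{s}_\xi$, a spin structure $s$ determines a canonical spin$^{\C}$-structure $\mathfrak{s}_s$ with vanishing $c_1$, and I set $\Gamma(\xi,s):=\mathfrak{s}_\xi-\mathfrak{s}_s\in H^2(M)$, the affine difference in the torsor of spin$^{\C}$-structures. The identity $c_1(\mathfrak{s}_1)-c_1(\mathfrak{s}_2)=2(\mathfrak{s}_1-\mathfrak{s}_2)$, combined with $c_1(\mathfrak{s}_\xi)=c_1(\xi)=c_1(J)$, yields $2\Gamma(\xi,s)=c_1(J)$; varying $s$ over its $H^1(M;\Z/2)$-torsor changes $\Gamma$ by the Bockstein image, which surjects onto the 2-torsion in $H^2(M)$, so $\Gamma(\xi,s)$ ranges over every half-lift as required. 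The $\Z$-action itself is produced by a canonical local modification of $\xi$ inside a coordinate ball that implements the generator of $\pi_3(S^2)$: this modification preserves $\mathfrak{s}_\xi$ hence $\Gamma$, and its orbits recover Pontryagin's secondary obstruction, establishing (a) and the first clause of (b).

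The most delicate step is $\tilde\Theta$. Given $J\in\J(M)$, a spin structure $s$, and a normally framed 1-manifold $L\subset M$ Poincar\'e dual to $\Gamma(J,s)$, I would choose a compact almost-complex 4-manifold $W$ with $\partial W=M$ whose boundary plane field is $\xi$, and use the framed $L$ together with $s$ to specify an extension of $\mathfrak{s}_\xi$ across $W$, equivalently, to pin down an integral lift of $c_1(W)$ so that $c_1^2(W)\in\Z$ is unambiguous. I would then set
\[
\tilde\Theta(J,s,L):=c_1^2(W)-2\chi(W)-3\sigma(W)\pmod{4\di c_1(J)}.
\]
Independence of $W$ follows by the standard cobordism argument: two choices differ by a closed almost-complex 4-manifold $X$, and the identity $c_1^2(X)=2\chi(X)+3\sigma(X)$ kills the difference. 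Because $2\chi(W)$ and $3\sigma(W)$ contribute fixed residues modulo $4$ once the spin and framing data are fixed, all values of $\tilde\Theta$ within a single orbit of $\Gamma$ lie in one coset of $4\Z$ inside $\Z/4\di c_1(J)$, giving the claimed index-$4$ refinement. Equivariance under the $\Z$-action reduces to a local computation of how one application of the generator changes $c_1^2(W)$.

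The main obstacle is the infinite-order case, where $\di c_1(J)=0$, so $4\di c_1(J)=0$ and $\tilde\Theta$ must be an honest integer rather than a residue class. This is precisely the setting this paper requires and the one sidestepped by prior contact-topology literature. Making it work forces one to maintain the full framing datum on $L$, fix careful conventions for extending spin$^{\C}$-structures across $W$, and verify that the totality of ambiguities from the choices of $W$, of the extension of $\mathfrak{s}_\xi$, and of the framing of $L$ collapses into the single integer modulus $4\di c_1(J)$, with no residual freedom in the torsion-free case.
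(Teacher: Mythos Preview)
Your outline matches the paper's strategy closely: $\Gamma$ as a spin$^\C$ difference class, the $\Z$-action as the Hopf modification, and $\tilde\Theta$ built from the quantity $c_1^2-2\chi-3\sigma$ on an almost-complex filling, with well-definedness coming from the closed-manifold identity $c_1^2=2\chi+3\sigma$. The paper carries this out in the more general setting of a 4-manifold $V$ homotopy equivalent to $M$ (needed for the sliced-concordance invariance used elsewhere), but for $V=\R\times M$ the arguments are the same.

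Where your sketch is genuinely incomplete is the step you flag yourself. Your phrase ``pin down an integral lift of $c_1(W)$ so that $c_1^2(W)\in\Z$ is unambiguous'' is not the right picture: when $c_1(J)|_M\neq 0$ there is no lift of $c_1(W)$ to $H^2(W,\partial W)$ at all, so there is nothing to pin down in that sense. What the framing actually does is replace $c_1^2$ by a \emph{relative normal Euler number}. Concretely, the paper represents $\Gamma(J,s)$ by a framed surface $F$ (with framing $\phi$), chooses a $J$-complex trivialization $\tau_J$ on the complement of $F$ compatible with $s$, and then takes a surface $\tilde F$ dual to the relative class $c_1(J_Y,\tau_J)$; near the end of $Y$ this $\tilde F$ consists of two $\phi$-parallel copies of $F$. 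One then sets $\tilde\Theta=e(\nu\tilde F,\phi)-2\chi(Y)-3\sigma(Y)$. The framing $\phi$ enters only through this relative Euler number, and the mod $4\di c_1(J)$ ambiguity comes from the freedom in choosing $\tau_J$ (changing it alters $\tilde F$ by an even closed class, hence changes $e(\nu\tilde F,\phi)$ by a multiple of $4\di c_1(J)$). Without this mechanism spelled out, your ``standard cobordism argument'' does not go through in the infinite-order case, because the two fillings glue to a closed $X$ but the two ``$c_1^2$'' terms are not a priori pieces of $c_1^2(X)$.

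One further point you pass over: the equivariance, i.e.\ that the generator shifts $\tilde\Theta$ by exactly $4$, is not a formal local computation but an explicit comparison on $\R\times S^3$ between the standard complex structure and its mirror (the left- versus right-handed Hopf plane fields), yielding the values $\pm 2$ and hence the jump of $-4$. This is what fixes the normalization and shows the orbit lands in a single index-$4$ coset.
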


\noindent We will prove this theorem in more generality in Section~\ref{Conc}, by defining the invariants for $\J(V)$ whenever the 4-manifold $V$ is homotopy equivalent to $M$. Dependence of the invariants on the auxiliary data is given in \cite{Ann} and Proposition~\ref{vary}. Notably, the chosen framing in (b) is well defined mod $\di c_1(J)$ on the homology class dual to $\Gamma(J,s)$ (cf.\ Proposition~\ref{orbit}).

 Our main tool for applying these invariants to TPC embeddings is the following theorem, which we will apply to the output of Theorem~\ref{onion}. That is, we will exhibit the required $M$ in $X$ as the boundary of a topologically embedded (or immersed) $Y$ that, by construction, will be a spin 2-handlebody without 1-handles. Then $\inter Y$ will inherit a smoothing and almost-complex structure. We wish to represent $\Gamma$ by a framed 1-manifold in $M$, then compute $\tilde{\Theta}$ via the obstruction to extending this framing over a suitable surface in $Y$.

\begin{thm}\label{Y}
Let $(Y,s)$ be a smooth, compact, spin 4-manifold with boundary $M$, $H^3(Y;\Z/2)=0$ and no 2-torsion in $H^2(Y;\Z)$. On $\inter Y$, fix another smoothing and an almost-complex structure $J_Y$. Then any induced boundary collar smoothing $V$ is sliced concordant to the standard smoothing on $\R\times M$, identifying $\J(V)$ with $\J(M)$. Let $(F,\partial F)\subset (Y,M)$ be a compact surface with a (normal) framing $\phi$ on $\partial F\subset M$, such that $2[F]\in H_2(Y,M)$ is Poincar\'e dual to $c_1(J_Y)$. Then the restriction $J_V=J_Y|V$ has $\Gamma(J_V,s)$ Poincar\'e dual to $[\partial F]$ and $\tilde{\Theta}(J_V,s,\phi)\equiv 4e(\nu F,\phi)-2\chi(Y)-3\sigma(Y)$ mod $4\di c_1(J)$, where $e(\nu F,\phi)$ is the normal Euler number of $F$ relative to $\phi$.
\end{thm}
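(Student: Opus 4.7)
The plan separates into three parts, matching the three claims of the theorem.

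First, for the sliced concordance claim, I would invoke the Kirby--Siebenmann classification: sliced concordance classes of smoothings on an open 4-manifold form a torsor over $H^3(-;\Z/2)$. Since $\inter Y$ deformation retracts onto $Y$, we have $H^3(\inter Y;\Z/2) = H^3(Y;\Z/2) = 0$ by hypothesis, so the smoothing inherited from $Y$ and the given new smoothing on $\inter Y$ are sliced concordant. Restricting such a concordance along a bicollared embedding $\R\times M \hookrightarrow \inter Y$ yields a sliced concordance on $\R\times M$ between the standard smoothing (from the $Y$-collar of $M$) and $V$; Theorem~\ref{sliced0} then identifies $\J(V)\cong \J(M)$.

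Next, to compute $\Gamma(J_V, s)$, observe that the spin structure $s$ on $Y$ and the almost-complex structure $J_Y$ on $\inter Y$ each determine a spin$^\C$ structure on $\inter Y$, whose difference is a class $\delta \in H^2(\inter Y) \cong H^2(Y)$ with $2\delta = c_1(J_Y)$. The hypothesis of no 2-torsion in $H^2(Y)$ forces $\delta$ to be the unique square root of $c_1(J_Y) = 2\,\text{PD}([F])$, so $\delta = \text{PD}([F])$. Under Poincar\'e--Lefschetz duality the restriction $H^2(Y) \to H^2(M)$ corresponds to the connecting homomorphism $\partial\co H_2(Y, M) \to H_1(M)$, so $\delta|_M = \text{PD}([\partial F])$; unwinding the definition of $\Gamma$ as a spin$^\C$ difference class gives $\Gamma(J_V, s) = \text{PD}([\partial F])$.

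For $\tilde{\Theta}$, recall that it measures the failure of the closed-surface identity $c_1^2 = 2\chi + 3\sigma$ on a 4-manifold bounding $(M, J)$, with the framing $\phi$ supplying the relative data that makes $c_1^2$ well-defined modulo $4\di c_1(J)$. Applied to $(Y, J_Y)$: from $c_1(J_Y) = 2\,\text{PD}([F])$ we obtain $c_1(J_Y)^2 = 4\,[F]\cdot[F]$, and the boundary-framed self-intersection $[F]\cdot[F]$ is by definition $e(\nu F, \phi)$. Substituting produces the claimed formula.

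The principal difficulty will be in making the last step rigorous: verifying that $\tilde{\Theta}$, though intrinsic to the data $(M, J_V, s, \phi)$, can be faithfully computed from an arbitrary spin extension $(Y, J_Y)$ even when the new smoothing on $\inter Y$ is exotic and fails to be a product near $\partial Y$. This reduction must rest on the Section~\ref{Conc} machinery---especially the sliced concordance invariance of $\tilde{\Theta}$ (Theorem~\ref{class})---together with the fact that each ingredient $\chi(Y)$, $\sigma(Y)$, $e(\nu F, \phi)$, and $c_1(J_Y)$ (viewed as a characteristic class of the topological complex $2$-plane bundle underlying $J_Y$) is unaffected by the choice of smoothing on $\inter Y$; the mod $4\di c_1(J)$ reduction then absorbs the residual ambiguity arising from different choices of $F$ with prescribed boundary.
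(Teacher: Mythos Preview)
Your proposal is correct and follows essentially the same route as the paper's proof. The paper's argument has the same three-part structure: the vanishing of $H^3(Y;\Z/2)$ gives the sliced concordance (transferring $J_Y$ to a structure $J$ on the original smoothing of $Y$); the absence of 2-torsion in $H^2(Y)$ pins down $\Gamma(J,s)$ as $\mathrm{PD}[F]$, which restricts to $\mathrm{PD}[\partial F]$; and for $\tilde\Theta$ one applies Definition~\ref{thetaDef} directly with the given $Y$ as the cap, taking $\tilde F$ to be two $\phi$-parallel copies of $F$ so that $e(\nu\tilde F,\phi)=4e(\nu F,\phi)$ by quadraticity---exactly your ``$c_1^2=4[F]\cdot[F]$'' computation.
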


\noindent Since $Y$ is spin, 2 is a factor of $c_1(J_Y)$, so a suitable $F$ always exists. Note that when $\partial F$ is empty, $4e(\nu F,\phi)=c_1^2(J_Y)$ (which is well-defined in this case). Thus, $\tilde{\Theta}(J_V,s,\phi)$ is a relative version of $c_1^2(J_X)-2\chi(X)-3\sigma(X)$, which vanishes for closed, almost-complex 4-manifolds $X$.

\begin{proof}[Proof of Theorem~\ref{main}]
First we show that, for $m$ as given, there is a spin structure $s$ on $M$ such that either $m$ (when it is odd) or $\frac{m}{2}$ is a factor of $\Gamma(J,s)$, and that every spin structure $s$ has this property unless $H_1(M)$ has 2-torsion and $m$ is divisible by 4. In both cases (a, b) of the theorem,  $m$ is a factor of $c_1(J)=2\Gamma(J,s)$ for every $s$. If $m$ is divisible by 4, we can choose $s$ so that $\frac{m}{2}$ is a factor of $\Gamma(J,s)$. (Write $c_1(J)=2(\frac{m}{2}\beta)$ and apply the last sentence of Theorem~\ref{Ann}(a)). This covers all spin structures when $H_1(M)\cong H^2(M)$ has no 2-torsion, since $\Gamma(J,s)$ is then independent of $s$. If $m$ is not divisible by 4, either $m$ or $\frac{m}{2}$ is an odd integer $2r+1$. In the quotient $H^2(M)/(2r+1)H^2(M)$, we have $2\Gamma(J,s)=0$ for each $s$ so $\Gamma(J,s)=(2r+1)\Gamma(J,s)=0$. Thus $\Gamma(J,s)$ has the required factor.

We can now describe $M$ as a suitable boundary. It is well known (e.g.\ \cite{GS}) that every spin 3-manifold bounds a spin 2-handlebody without 1-handles. Choose such a pair $(Y,s_Y)$ bounded by $(M,s)$. Since $H^3(Y,M)\cong H_1(Y)=0$, the restriction map $H^2(Y)\to H^2(M)$ is surjective. Thus, $\Gamma(J,s)$ pulls back to a class $\gamma\in H^2(Y)$. By factoring $\Gamma(J,s)$ as in the previous paragraph before pulling back, we can arrange $m$ to be a factor of $2\gamma$. Choose a compact surface $F$ in $Y$ Poincar\'e dual to $\gamma$, and a framing $\phi$ on $\partial F\subset M$, and let $\theta=4e(\nu F,\phi)-2\chi(Y)-3\sigma(Y)\in\Z/4\di c_1(J)$.

To prove (a), note that we can modify $Y$ to change $\theta$ by any multiple of 4, for example by taking the connected sum (away from $F$) with copies of $S^2\times S^2$ or the K3-surface. The modified $Y$ is still a 4-ball with 2-handles attached, so it is homotopy equivalent to a wedge of 2-spheres. For the given class $\alpha\in\pi_2(X)$, we have arranged that $\langle c_1(X),\alpha\rangle=m$ is a factor of $2\gamma$. Thus, we can construct a map $f\co Y\to X$ such that $f^*c_1(X)$ agrees with $2\gamma$ when evaluated on $H_2(Y)$, by sending each sphere of the wedge to a suitable multiple of $\alpha$. Since $H_1(Y)=0$, $H^2(Y)$ equals ${\rm Hom}(H_2(Y),\Z)$, so $f^*c_1(X)=2\gamma$. In particular, $f^*w_2(X)=0=w_2(Y)$. We can assume $f$ is an embedding on the 4-ball and an immersion on the cores of the 2-handles. After adding double points to the images of these cores, we can assume their normal Euler numbers relative to fixed framings in $\partial B^4$ are the same as in $Y$. (They are initially the same mod 2 by the $w_2$ condition, and each double point changes the difference by $\pm2$.) We can then assume $f$ is an immersion. Let $J_Y=f^*J_X$ be the pulled back complex structure on $Y$. Apply Theorem~\ref{onion} to $Y$ inside itself, isotoping $Y$ to $Y_\sigma$ inside a Stein onion in $Y$, and let $V$ be a boundary collar smoothing in $Y_\sigma$ that exhibits TPC embeddings of $M$ inside $Y$. Applying $f$ to these gives TPC immersions of $M$ in $X$ since $f\co Y\to X$ is holomorphic by the definition of $J_Y$. Since $c_1(J_Y)=f^*c_1(X)=2\gamma$ is Poincar\'e dual to $2[F]$, Theorem~\ref{Y} implies that $\Gamma(J_V,s)=\gamma|M=\Gamma(J,s)$ (where we canonically identify spin structures on $V$ with those on $M$). Thus, by Theorem~\ref{Ann}(a), $J_V$ lies in the same $\Z$-orbit as $J$. By Theorem~\ref{Y}, $\tilde{\Theta}(J_V,s,\phi)=\theta$, which we have already seen can take any preassigned value in the appropriate coset. Thus, we can assume $\tilde{\Theta}(J_V,s,\phi)=\tilde{\Theta}(J,s,\phi)$, so $J_V=J$ in $\J(V)=\J(M)$ by Theorem~\ref{Ann}(b), proving (a). Since $H^1(Y;\Z/2)=0$, $s_Y$ is the unique spin structure on $Y$, so any spin structure on $X$ pulls back to $s_Y|V$ on $V$, which we have identified with $s$ on $M$. This proves the last sentence of the theorem in the case of immersions. Since the proof applies to any $(M,J)$ (e.g.~ by taking $X$ nonminimal so $m=1$), we have also shown that any $\theta$ as constructed in the second paragraph is congruent mod 4 to every $\tilde{\Theta}(J',s,\phi)$ for which $\Gamma(J',s)=\Gamma(J,s)$ (cf.~Proposition~\ref{mod4}). Furthermore:

\begin{cor}\label{2hCplx}
Every $(M,J,s)$ bounds a spin complex surface $Y$ made by adding 2-handles to $B^4$. \qed
\end{cor}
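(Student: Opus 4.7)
My plan is to observe that the corollary is essentially read off from the proof of Theorem~\ref{main}(a): the intermediate object $(Y, J_Y)$ constructed there is itself the desired spin complex surface, so no new construction is needed.

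Given $(M, J, s)$, I would apply that proof choosing $X$ to be any closed, simply connected, nonminimal complex surface with $b_\pm$ sufficiently large, and taking $\alpha \in \pi_2(X)$ to be an exceptional sphere so that $m = \langle c_1(X), \alpha\rangle = 1$; every divisibility hypothesis in part (a) is then trivially satisfied. The proof begins by selecting a spin 2-handlebody $(Y, s_Y)$ without 1-handles with $\partial(Y, s_Y) = (M, s)$, and then builds a smooth immersion $f \co Y \to X$ with the required Chern class behavior.

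The key observation is that the pulled-back almost-complex structure $J_Y = f^*J_X$ is integrable on $Y$: since $f$ is a smooth immersion into the complex manifold $X$, it is automatically holomorphic with respect to $J_Y$, and vanishing of the Nijenhuis tensor pulls back through immersions. Hence $(Y, J_Y)$ is a compact complex surface, topologically a 2-handlebody built from $B^4$ by attaching 2-handles, spin with unique spin structure $s_Y$ (since $H^1(Y; \Z/2) = 0$) restricting to $s$ on $M$. The proof of Theorem~\ref{main}(a) then invokes Theorem~\ref{Y} to verify that the induced boundary collar almost-complex structure $J_V = J_Y|V$ represents the class $J$ in $\J(V) \cong \J(M)$, so $(Y, J_Y, s_Y)$ is the spin complex surface bounding $(M, J, s)$ that the corollary asserts.

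There is no substantive obstacle: the corollary simply records that the object $(Y, J_Y)$ assembled during the proof of part (a) already has all the required properties, independently of its further role as the domain of the holomorphic immersion $f$ to $X$.
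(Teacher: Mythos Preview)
Your proposal is correct and is essentially the paper's own argument: the corollary is placed immediately after the proof of part~(a) with a \qed, precisely because the object $(Y,J_Y,s_Y)$ assembled there (with $m=1$ via a nonminimal $X$) is already the desired spin complex surface. Your remark that $J_Y=f^*J_X$ is integrable because it is pulled back through an immersion is the one point the paper leaves implicit, and you have identified it correctly; the requirement that $b_\pm$ be large is unnecessary for part~(a) but harmless.
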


To prove (b), start with the data $(Y,s_Y,F,\phi)$ of the second paragraph of the proof. After adding a pair of 2-handles if necessary, we can assume $Y$ contains an $S^2\times S^2$ connected summand disjoint from $F$. Take the double $Z=DY=\partial(I\times Y)=Y\cup_M \overline{Y}$ and set $n_M=b_2(Y)+7=b_+(DY)+7=b_-(DY)+7$. We wish to modify the pair $(DY,DF)$ in the complement of $M$, to control $e(\nu F,\phi)$ while the homology class $2[DF]$ changes to attain the same square and divisibility as $c_1(X)$. Since $X$ is simply connected, $H^2(X)$ has no torsion. Since $c_1(X)$ is {\em characteristic}, i.e., it reduces mod 2 to $w_2(X)$, it follows that $X$ is spin if and only if $m=\di c_1(X)$ is even. In this case, we can change $e(\nu F,\phi)$ by any multiple of $\frac12 m^2$ by adding a component to $F$ representing the class $\frac{m}{2}(1,k)\in H_2(S^2\times S^2)$ (using the obvious basis). Note that $m\ne0$ since the given bound on $b_+(X)$ rules out the K3 surface. The divisibility of the new $[F]$ will be exactly $\frac{m}{2}$ since this is its intersection number with $(0,1)\in H_2(S^2\times S^2)$. By suitably choosing $k$ and summing $Y$ with fewer than $\frac12 m^2$ copies of $S^2\times S^2$ disjointly from $F$, we can arrange $\theta$ to equal $\tilde{\Theta}(J,s,\phi)$. (Recall that these agree mod 4 by the previous paragraph.) Continue to denote the 4-manifold pair as $(Z,Y)$ (without changing $n_M$, and noting that $Z$ is no longer $DY$), and let $\hat{F}$ be the modified version of $DF$ in $Z$. We have $b_+(Z)=b_-(Z)\le n_M-8+\frac12 m^2$ and $\di[\hat F]=\frac{m}{2}$. (We can replace $n_M$ by $n_M-8$ in the theorem for spin $X$ excluding the K3 surface.) Since the intersection forms of $X$ and $Z$ are even and $c_1(X)$ has the form $m\beta$, the squares $c_1^2(X)$ and $(2[\hat{F}])^2$ have a common factor of $2m^2$. Thus, we can set them equal by adding to $\hat{F}$ a component with divisibility $\frac{m}{2}$ lying in the $S^2\times S^2$-summand disjoint from $Y$.

When $X$ is not spin, we apply a similar procedure, although $m$ is odd. In this case, we can change $e(\nu F,\phi)$ by any multiple of $2m^2$, using the class $m(1,k)\in H_2(S^2\times S^2)$. The resulting $F$ has divisibility $m$, and we need fewer than $2m^2$ $S^2\times S^2$-summands to set $\theta=\tilde{\Theta}(J,s,\phi)$, so $b_+(Z)=b_-(Z)\le n_M-8+2 m^2$. This time, we know that $c_1^2(X)$ and $(2[\hat{F}])^2$ have a common factor of $m^2$, but we can only change $(2[\hat{F}])^2$ by a multiple of $8m^2$ using the $S^2\times S^2$-summand outside $Y$. Thus, we can only arrange $(2[\hat{F}])^2$ to equal $c_1^2(X)+lm^2$ for some $l\in\Z$ that can be changed by any multiple of 8. We wish to modify the class $c=2[\hat{F}]$ outside $Y$ so that it has the same square and divisibility as $c_1(X)$. Let $\mu=m$ or $3m$, whichever has square 1 mod 16. (These both have square 1 mod 8 since they have the form $2r+1$.) We set $c^2=c_1^2(X)$ by summing $Z$ outside $Y$ with up to 8 copies of $\pm\C P^2$ and adding $\mu$ times each generator to $c$, then choosing $l$ suitably. Now $b_\pm(Z)\le n_M+2 m^2$.  Note that there are two ways to do this, depending on whether we choose the relevant $l$ to be an even or odd multiple of 8, and the resulting manifolds $Z$ have signatures differing by 8. We can arrange to use at least one $\pm\C P^2$-summand in this process, by using a canceling pair if necessary. Thus, neither $Z$ nor $X$ is spin in this case, but by construction, $c$ is characteristic. In both the spin and nonspin cases, we have now constructed a characteristic element $c\in H_2(Z)$ (which is $[2\hat{F}]$ in the spin case) with the same square and divisibility as $c_1(X)$. This is represented by a cycle whose intersection with $Y$ is twice our suitably modified $F$.

To complete the proof of (b), we use Freedman's classification of closed, simply connected, topological 4-manifolds \cite{F} to construct a homeomorphism between $X$ and a  manifold of the form $Z\# Z'$, where $Z'$ is a topological manifold with even intersection form. Since the square of any characteristic element is congruent mod 8 to the signature (e.g.~\cite[Lemma~1.2.20]{GS}), and $c^2=c_1^2(X)$, the signatures of $X$ and $Z$ are congruent mod 8. In the spin case, they are congruent mod 16 by Rohlin's theorem, and in the nonspin case we arrange this by choice of $Z$. By hypothesis and construction, we have $b_+(Z)\le b_+(X)$ and $b_-(Z)\le b_-(X)$. Since the intersection forms $Q(X)$ and $Q(Z)$ are indefinite, and both even when $X$ is spin and both odd otherwise, the classification of unimodular forms over $\Z$ gives an isomorphism $Q(X)\cong Q(Z)\oplus Q^\perp$, where $Q^\perp$ is some even form with signature divisible by 16. (First add a hyperbolic summand to $Q(Z)$ to set the smaller of $b_+(X)-b_+(Z)$ or $b_-(X)-b_-(Z)$ to 0, then add $\pm E_8$ summands.) By Freedman's classification, $Q^\perp$ is realized by some simply connected topological manifold $Z'$ with Kirby--Siebenmann invariant $\frac18\sigma(Z')\equiv 0$ mod 2. Since $X$ and $Z$ are smooth, their Kirby--Siebenmann invariants vanish. Thus, Freedman gives a homeomorphism from $Z\# Z'$ to $X$. By Wall \cite{W}, the automorphisms of $Q(X)$ act transitively on characteristic elements of a given divisibility and square (since $b_{\pm}(X)\ge2$). Thus, we can choose Freedman's homeomorphism to identify $c$ (which is still characteristic since $Q^\perp$ is even) with the Poincar\'e dual of $c_1(X)$. The homeomorphism restricts to a topological embedding $Y\hookrightarrow X$ that generates a Stein onion by Theorem~\ref{onion}. (In contrast to (a), we apply the theorem to an embedding that is typically unsmoothable.) Then $c_1(J_Y)=c_1(X)|Y$ is dual to twice our modified $F$, so by Theorem~\ref{Y}, the resulting boundary collar smoothing has the required invariants to exhibit TPC embeddings of $(M,J)$. If there is a spin structure on $X$, it restricts to the unique $s_Y$ on $Y$ and to $s$ on $M$.
\end{proof}

\begin{Remarks}\label{necHyps}
(a) The hypotheses of Theorem~\ref{main}(b) are necessary in some form. A bicollared topological embedding $M\hookrightarrow X$ into a closed, simply connected complex surface cuts out a compact 4-manifold $Y$. To see that the lower bounds on $b_\pm(X)$ in the theorem must depend on $\di c_1(X)$, suppose that for fixed $M$, $X$ ranges over elliptic surfaces in a fixed homotopy type. Then the Mayer-Vietoris sequence shows that the Betti numbers of $Y$ are bounded, so $|2\chi(Y)+3\sigma(Y)|$ is bounded by some $k$. For $\di c_1(X)$ sufficiently large and $\Gamma(J,s)=0$ (for example) not all values of $\tilde{\Theta}(J,s,\phi)\in\Z$ can be realized (for fixed $s$ and $\phi$) since any change in its first term is divisible by $(\di c_1(X))^2\gg k$. (This follows from Theorem~\ref{Y} for suitable hypotheses on $Y$, and from Definition~\ref{thetaDef} and Theorem~\ref{Jfunctor} in general.) Similarly, the bound on $\di c_1(X)$ in Theorem~\ref{main0}(a) is necessary when $c_1(J)$ has finite order.

To see the difficulty with divisibility by 4 in the final statement of Theorem~\ref{main}, suppose that $H^2(M)\cong\Z\oplus\Z/2$, and we wish to realize a pair $(J,s)$ for which $\Gamma(J,s)=(2n,1)$, by a bicollared topological immersion into a simply connected, spin complex surface $X$ with 4 a factor of $\di c_1(X)$, which is itself a factor of $4n$. Since $c_1(J)=(4n,0)$, this setup satisfies the other hypotheses of the theorem in both cases (a) (via Theorem~\ref{main0}(b)) and (b) (when  $b_\pm(X)$ is sufficiently large). However, such an immersion is impossible: Since $J$ and $s$ would pull back from structures $J_X$ and $s_X$ on $X$, Definition~\ref{defGamma} would provide a class $\Gamma(J_X,s_X)\in H^2(X)$ pulling back to $\Gamma(J,s)$ (via the immersed bicollar and Theorem~\ref{Jfunctor}), with $2\Gamma(J_X,s_X)=c_1(X)$ (Proposition~\ref{Gamma} on $X-\{p\}$). Since $c_1(X)$ has a factor of 4 and $H^2(X)$ has no 2-torsion, this would imply that $\Gamma(J_X,s_X)$ has a factor of 2, as does its pullback $\Gamma(J,s)$ to $M$. However, all elements in $H^2(M)$ with a factor of 2 have the form $(2k,0)$. Of course, the other spin structure $s'$ on $M$ giving the same Chern class works since $\Gamma(J,s')=(2n,0)$.

\item[(b)] The TPC embeddings constructed in this section all explicitly arise from embedded Stein onions. The author does not know any other general constructions. However, there is a simple trick for producing TPC embeddings that do not lie in Stein onions: Suppose $M$ bounds a 2-handlebody $Y$ that topologically embeds in $\C^2$. For example, we could take $M=P\#\overline{P}$ as in the introduction that has no SPC embedding in any complex surface. Apply Theorem~\ref{onion} to embed $Y$ in a Stein onion in $\C^2$. After translating and rescaling, we can assume the unit ball at 0 lies in $\inter Y$. By compactness, $Y$ lies in a ball at 0 with some radius $R$. Removing 0 from $\C^2$ and modding out by multiplication by $R$, we obtain a TPC embedding of $M$ in a Hopf surface $X$ diffeomorphic to $S^1\times S^3$. This embedded $M$ does not bound anything in $X$ (even immersed) since it represents a generator of $H_3(X)$. By Theorem~\ref{main0}(d), there is a similar TPC immersion of any 3-manifold with any $J$ such that $c_1(J)=0$.
\end{Remarks}

\section{The structure of $\J(V)$}\label{Conc}%%%%%%%%%%%

We now prove the remaining theorems of Section~\ref{TPC}. For this, we need to understand the set $\J(V)$ of homotopy classes of almost-complex structures on a 4-manifold $V$ homotopy equivalent to a 3-manifold. We define a canonical $\Z$-action on $\J(V)$ and a complete set of invariants describing $\J(V)$ as a $\Z$-space. These are invariant under sliced concordance, hence, under topological isotopy when $V$ is smoothly embedded in another 4-manifold. They are concrete enough to use in completing the required proofs for Section~\ref{TPC}. They also further elucidate the $\Z$-spaces $\J(V)$, ultimately showing that these comprise a functor on suitable manifolds $V$ and homeomorphisms (not just diffeomorphisms) between them (Theorem~\ref{Jfunctor}).

\subsection{The invariant $\Gamma$}%%%%

We begin by defining and analyzing the invariant $\Gamma$ on $\J(V)$ that we will use in the next section to classify the orbits of the $\Z$-action. The first step is to understand the space of linear (orientation-preserving) complex structures on $\R^4$ with standard basis $e_0, e_1, e_2, e_3$. Up to homotopy, it suffices to consider orthogonal complex structures. The space of these is given by $\SO(4)/{\rm U}(2)=S^2$. In fact, any such $J$ is obtained from the standard complex structure $ie_0=e_1$, $ie_2=e_3$ by some rotation $R\in\SO(4)$. Precomposing with a unitary transformation preserves $J$, so we may assume $Re_0=e_0$. Then $Re_1$ must be $Je_0$. As $J$ varies, this can be any point on the unit sphere $S^2\subset\R^3={\rm span}\{e_1,e_2,e_3\}$. The vector $Je_0$ then completely determines $J$, since the latter must act on the orthogonal complement of $Je_0$ in $\R^3$ by a $+\frac{\pi}{2}$-rotation. Now that $S^2$ is identified with the set of orthogonal complex structures, we obtain a tautological complex bundle $E_{\rm{taut}}=S^2\times \R^4$, where the complex structure $J$ on the fiber over a given vector $v\in S^2$ is given by $Je_0=v$. There is a canonical complex identification $E_{\rm{taut}}=\C\oplus TS^2$, with first summand spanned over $\C$ by $e_0$.

Now let $E\to X$ be a trivial $\R^4$-bundle over a manifold (of any dimension). For any fixed trivialization $\tau$, the previous paragraph applies to each fiber, so the space of complex bundle structures on $E$ is homotopy equivalent to that of orthogonal complex structures (relative to $\tau$), and the latter structures correspond bijectively to maps $X\to S^2$. We obtain a bijection from the set $\J(E)$ of homotopy classes of complex bundle structures to the set $[X,S^2]$ of homotopy classes of maps, depending on $\tau$ only through its homotopy class. (It does depend crucially on the homotopy class of $\tau$; see Remark~\ref{tau}(a).) The Thom--Pontryagin construction (e.g.~\cite{M}) canonically identifies $[X,S^2]$ with the set $\Omega(X)$ of framed cobordism classes of (properly embedded) codimension-2 framed submanifolds of $\inter X$: Each $\varphi\co X\to S^2$ corresponds to the submanifold $\varphi^{-1}(p)$, where $p$ is any regular value, with normal framing chosen to map onto a fixed positive frame for $T_pS^2$. The inverse puts any framed submanifold into this form, where $\varphi$ is constant on the complement of a tubular neighborhood. It is routine to check that $\Omega$ is a contravariant functor: For each homotopy class of maps $f\co X\to Y$, define $f^*\co\Omega(Y)\to\Omega(X)$ by choosing $f$ transverse to a given framed submanifold and taking the preimage with pulled back framing. In fact, $\Omega$ is canonically isomorphic to the functor obtained from homotopy classes of maps to $S^2$ by composing functions in the obvious way. (The same discussion applies using codimension-$k$ submanifolds and $[X,S^k]$.)

To define $\Gamma$ and prove the subsequent proposition, we need a few basic facts about spin$^\C$-structures. On an $\R^4$-bundle, we can define complex, spin and spin$^\C$-structures to be respective lifts of the associated principal $\SO(4)$-bundle to the structure groups $\rm{U}(2)$, $\rm{Spin}(4)$ and $\rm{Spin}^\C(4)=(S^1\times \rm{Spin}(4))/\Delta$, where $\Delta$ is the diagonal $\Z/2$. Since the first two of these groups are canonically subgroups of the third, every complex or spin structure induces a spin$^\C$-structure. Since $\rm{Spin}^\C(4)$ canonically projects to $S^1\subset\C$, every spin$^\C$-structure determines a complex line bundle $L$ and hence a first Chern class. When the spin$^\C$-structure comes from a complex structure $J$, its Chern class $c_1(L)$ equals $c_1(J)$. When it comes from a spin structure $s$, its resulting Chern class satisfies $c_1(s)=-c_1(s)$, because $\rm{Spin}(4)\subset\rm{Spin}^\C(4)$ is the fixed set of the involution conjugating $S^1$ and $L$. When spin$^\C$-structures exist, they are classified by their primary difference obstruction in $H^2(X;\Z)$. Equivalently, this group acts freely and transitively on the set of spin$^\C$-structures (when the latter is nonempty). Similarly, $H^1(X;\Z/2)$ classifies spin structures, and the map from spin- to spin$^\C$-structures is equivariant under the action induced through the Bockstein homomorphism $H^1(X;\Z/2)\to H^2(X;\Z)$. Twice the difference class of a pair of spin$^\C$-structures is the difference of their Chern classes, so spin$^\C$-structures are determined by their Chern classes when $H^2(X)$ has no 2-torsion.

\begin{de}\label{defGamma}
Given a complex structure $J$ and a spin structure $s$ on an $\R^4$-bundle over $X$, let $\Gamma(J,s)\in H^2(X,\Z)$ denote the difference class of their induced spin$^\C$-structures.
\end{de}

\begin{prop}\label{Gamma}
For a fixed trivialization $\tau$ of $E\to X$ as above, let $s$ be its induced spin structure. Then for any complex structure $J\in\J(E)$, the class $\Gamma(J,s)$ is Poincar\'e dual to the properly embedded submanifold $F=\varphi^{-1}(p)\subset X$ that (when suitably framed) represents $J$ in $\Omega(X)$. In particular, the class dual to $F$ only depends on $\tau$ through $s$. There is a $J$-complex trivialization $\tau_J$ on $E|(X-F)$ agreeing with $s$ and representing twice the generator of $\pi_1({\rm U}(2))$ on each meridian of $F$. For fixed $J$, the invariants $\Gamma(J,s)$ for varying spin structures range over all classes with $2\Gamma(J,s)=c_1(J)$.
\end{prop}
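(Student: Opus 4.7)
The plan is to deduce the proposition from a universal-model computation on $S^2$ combined with the Thom--Pontryagin correspondence. Suppose $\varphi\co X\to S^2$ classifies $J$ via the bijection $\J(E)\cong[X,S^2]$. Then $(E,J)=\varphi^*(E_{\rm taut},J_{\rm taut})$ as complex bundles, $\tau$ is pulled back from the product trivialization of $E_{\rm taut}=S^2\times\R^4$, and $s=\varphi^*s_{\rm taut}$ where $s_{\rm taut}$ is the spin structure on $E_{\rm taut}$ induced by that trivialization. By naturality of the primary difference class of spin$^\C$-structures,
$$\Gamma(J,s)=\varphi^*\Gamma(J_{\rm taut},s_{\rm taut}).$$
On $S^2$, the splitting $E_{\rm taut}=\C\oplus TS^2$ gives $c_1(E_{\rm taut})=c_1(TS^2)=2\alpha$, where $\alpha$ is the positive generator of $H^2(S^2;\Z)$. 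Since $H^2(S^2;\Z)$ is torsion free and $2\Gamma(J_{\rm taut},s_{\rm taut})=c_1(J_{\rm taut})=2\alpha$, we conclude $\Gamma(J_{\rm taut},s_{\rm taut})=\alpha$. Pulling back yields $\Gamma(J,s)=\varphi^*\alpha=\mathrm{PD}[F]$ by definition of the Thom--Pontryagin correspondence. This immediately gives the second assertion: the class $\mathrm{PD}[F]$ equals $\Gamma(J,s)$, which depends on $\tau$ only through $s$.

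To construct $\tau_J$, observe that $\varphi|(X-F)$ takes values in the contractible space $S^2\setminus\{p\}$, so it is null-homotopic to a constant map at some point $q$. Lifting this null-homotopy through $\mathrm{SO}(4)\to\mathrm{SO}(4)/\mathrm{U}(2)=S^2$ produces, over each point of $X-F$, a path in $\mathrm{SO}(4)$ interpolating between the identity and an element rotating the standard complex structure $J_0$ from $\tau$ into $J$. Applying these rotations to $\tau$ gives a $J$-complex trivialization $\tau_J$ of $E|(X-F)$. Because the deformation stays in $\mathrm{SO}(4)$ and $\tau$, $s$ define the same spin$^\C$-structure at the outset, $\tau_J$ and $s$ induce canonically equal spin$^\C$-structures on $X-F$; a choice of gauge in the lifted homotopy makes $\tau_J$ agree with $s$ on the nose as $\mathrm{Spin}^\C(4)$-reductions.

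For the meridian computation, restrict to a fiber disk $D^2$ of the tubular neighborhood $\nu F$, meeting $F$ transversely at a single point. The $\mathrm{U}(2)$-bundle $(E,J)|D^2$ is trivial (as $D^2$ is contractible), but its Chern class relative to $\tau_J|\partial D^2$ is
$$\langle c_1(J),[D^2,\partial D^2]\rangle=\langle 2\,\mathrm{PD}[F],[D^2,\partial D^2]\rangle=2\cdot(F\cdot D^2)=2.$$
Any $\mathrm{U}(2)$-trivialization extending over $D^2$ restricts on $\partial D^2$ to differ from $\tau_J$ by a loop in $\mathrm{U}(2)$ whose class in $\pi_1(\mathrm{U}(2))=\Z$ equals this relative Chern number, namely $2$. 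Finally, for the last statement, fix a reference spin structure $s_0$ with $\Gamma_0=\Gamma(J,s_0)$. Spin structures form a torsor over $H^1(X;\Z/2)$ and the inclusion $\mathrm{Spin}\hookrightarrow\mathrm{Spin}^\C$ is equivariant via the Bockstein $\beta\co H^1(X;\Z/2)\to H^2(X;\Z)$, so $\Gamma(J,s_0+a)=\Gamma_0-\beta(a)$. Since the image of $\beta$ equals the $2$-torsion subgroup of $H^2(X;\Z)$ (the kernel of multiplication by $2$), the values $\Gamma(J,s)$ range over exactly $\Gamma_0+\ker(\times 2)=\{\gamma:2\gamma=c_1(J)\}$.

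The main subtlety I expect is in the middle paragraph: arranging the lifted null-homotopy so that $\tau_J$ genuinely agrees with $s$ as a $\mathrm{Spin}^\C(4)$-reduction (rather than merely as isomorphism class of bundle), and then correctly reading off the factor $2$ in $\pi_1(\mathrm{U}(2))$ from the relative first Chern class rather than the naive winding of $\varphi$ itself.
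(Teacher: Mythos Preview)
Your argument is essentially correct and follows the same universal-model strategy as the paper: compute $\Gamma(J_{\rm taut},s_{\rm taut})=[S^2]$ on $E_{\rm taut}$ and pull back by $\varphi$. Your first and last paragraphs match the paper's reasoning almost verbatim.

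The difference lies in how you build $\tau_J$ and verify the meridian condition. The paper simply pulls back the explicit $J_{\rm taut}$-complex trivialization of $E_{\rm taut}|(S^2-\{p\})$ obtained by identifying $S^2-\{p\}$ with $\C$ (so $E_{\rm taut}\cong\C\oplus T\C\cong\C^2$ there). This makes the meridian check immediate: the complex trivialization of $TS^2$ coming from $T\C$ winds twice around a small circle at $\infty$ since $e(TS^2)=2$, and $\varphi$ carries a meridian of $F$ with degree one to a meridian of $p$. Your route instead lifts a null-homotopy through $\SO(4)\to S^2$ to produce $\tau_J$; this works (and the resulting $R\co X-F\to\SO(4)$ is null-homotopic, hence spin-trivial, giving agreement with $s$), but it is more circuitous and you then have to recover the meridian winding by a separate relative Chern class computation.

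One point to tighten: the expression $\langle c_1(J),[D^2,\partial D^2]\rangle$ does not parse as written, since $c_1(J)\in H^2(X)$ is absolute while $[D^2,\partial D^2]$ is relative. What you need is the relative class $c_1(J,\tau_J)\in H^2(X,X-F)$, and then the fact that its restriction to a normal disk is determined (via excision and the Thom isomorphism) by its image $c_1(J)=2\,\mathrm{PD}[F]$ in $H^2(X)$---or, more directly, that it is pulled back from $H^2(S^2,S^2-\{p\})\cong\Z$, where the value is visibly $2$. Either fix closes the gap; the paper's explicit pullback avoids the issue entirely.
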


\begin{proof}
After a homotopy making $J$ $\tau$-orthogonal, $\tau$ and $J$ together determine a map $\varphi\co X\to S^2$. By construction, this is covered by a complex bundle map $E\to E_{\rm{taut}}$, such that $\tau$ is the pullback of the constant real trivialization of $E_{\rm{taut}}=S^2\times\R^4$. In particular, $J=\varphi^*J_{\rm{taut}}$ and $s=\varphi^*s_{\rm{taut}}$, where $s_{\rm{taut}}$ is the constant (and in fact unique) spin structure on $E_{\rm{taut}}$. Since $E_{\rm{taut}}\cong\C\oplus TS^2$, we have $c_1(J_{\rm{taut}})=2[S^2]$. Since $H^2(S^2)$ has no 2-torsion, $c_1(s_{\rm{taut}})=-c_1(s_{\rm{taut}})=0$, and division by 2 is unique. Thus, $\Gamma(J_{\rm{taut}},s_{\rm{taut}})=\frac12(c_1(J_{\rm{taut}})-c_1(s_{\rm{taut}}))=[S^2]$. The latter is Poincar\'e dual to the point $p\in S^2$, so $\Gamma(J,s)=\varphi^*\Gamma(J_{\rm{taut}},s_{\rm{taut}})$ is dual to $\varphi^{-1}(p)$. Similarly, $2\Gamma(J,s)=2\varphi^*[S^2]=c_1(J)$. The trivialization $\tau_J$ comes from pulling back the trivialization on $E_{\rm{taut}}|(S^2-\{p\})$ obtained by identifying the base with $\C$. By the long exact coefficient sequence for $\Z\to\Z\to\Z/2$, the image of the above Bockstein consists of the elements of $H^2(X;\Z)$ with order at most 2. Thus, we can change $\Gamma(J,s)$ by any order-2 element by varying $s$, realizing all classes with $2\Gamma(J,s)=c_1(J)$.
\end{proof}

The invariant $\Gamma$ classifies $\J(E)$ whenever $X$ has the homotopy type of a 2-complex, so it classifies $\J(X)$ when $X$ is a spin 4-manifold with such a homotopy type. In this context, $\J(E)$ is essentially the set of spin$^\C$-structures and $\Gamma(\cdot,s)$ is a bijection. However, a 3-cell causes further complications that we address in the next two sections.

\subsection{Canonical $\Z$-actions} We  construct the desired $\Z$-action on $\J(V)$, along with some useful $\Z$-actions on related sets. We restrict the previous discussion to the case where $X=V$ is an open 4-manifold with a homotopy equivalence $f\co V\to M$ to a closed (connected, oriented) 3-manifold, and $E=TV$. In the case of primary interest, $V$ is a possibly exotic smoothing of $\R\times M$, with $f$ the obvious projection (up to homotopy), but the general case has the same required properties:

\begin{prop}\label{V}
The ends of $V$ comprise a canonically ordered set of two elements. The intersection pairing on $H_*(V;\F)$ vanishes for any field $\F$. There is a spin structure on $V$, and any such extends to a trivialization of $TV$ (nonuniquely). There is an induced bijection $f^*\co\Omega(M)\to\Omega(V)$.
\end{prop}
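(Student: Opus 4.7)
The plan is to prove the four claims in order, with the end analysis as the central technical piece. Poincar\'e--Lefschetz duality on the oriented open 4-manifold $V$, combined with $f\co V\simeq M$, gives $H^k_c(V;\F)\cong H_{4-k}(M;\F)$ for any field $\F$; in particular $H^0_c(V)=0$ and $H^1_c(V)\cong\Z$. To analyze the ends, I would smoothly approximate $f$ and pick a regular value $p\in M$; the preimage $f^{-1}(p)\subset V$ is then a properly embedded 1-manifold whose class under Poincar\'e duality is the generator of $H_1^{BM}(V)\cong H^3(V)\cong\Z$. The exact sequence $H^0(V)\to H^0(\mathrm{End}(V))\to H^1_c(V)\to H^1(V)$ forces $|\mathrm{End}(V)|\le 2$, while the connecting homomorphism $H_1^{BM}(V)\to H_0(\mathrm{End}(V))/H_0(V)$ carries $[f^{-1}(p)]$ to a generator, so some noncompact component of $f^{-1}(p)$ runs between distinct ends, giving $|\mathrm{End}(V)|\ge 2$. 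The canonical ordering of the ends is induced by the orientation of this oriented proper arc (which is fixed by the orientations on $V$ and $M$ through the preferred generator of $H^3(V)$): source end first, sink end second.

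For the intersection pairing on $H_*(V;\F)$, factor via Poincar\'e duality as $\alpha\cdot\beta = \langle j(\mathrm{PD}\,\alpha)\smile \mathrm{PD}\,\beta,[V]_{\mathrm{BM}}\rangle$, where $j\co H^*_c(V)\to H^*(V)$ is the forgetful map. Vanishing reduces to $j=0$ in degrees $1,2,3$, equivalently to injectivity of $H^*(V)\to\varinjlim_K H^*(V\setminus K)\cong H^*(\mathrm{End}(V))$ from the cohomology-of-ends long exact sequence. With two ends established and each end neighborhood inheriting $H^*(M)$ via inclusion into $V\simeq M$, this map is the diagonal $H^*(M)\to H^*(M)^{\oplus 2}$, which is injective. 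Wu's formula on the open oriented 4-manifold $V$ then gives $\langle w_2(V)\smile x,[V]_{\mathrm{BM}}\rangle = \langle x\smile x,[V]_{\mathrm{BM}}\rangle = 0$ for every $x\in H^2_c(V;\F_2)$, and nondegeneracy of the Poincar\'e pairing $H^2(V;\F_2)\otimes H^2_c(V;\F_2)\to\F_2$ forces $w_2(V)=0$; a spin structure exists. Any spin structure extends to a trivialization of $TV$ by obstruction theory: the classifying map $V\to B\mathrm{Spin}(4)$ factors through $M\simeq V$, and all obstructions in $H^k(M;\pi_{k-1}\mathrm{Spin}(4))$ vanish because $\mathrm{Spin}(4)=S^3\times S^3$ has $\pi_0=\pi_1=\pi_2=0$ and $H^k(M)=0$ for $k\ge 4$. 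Nonuniqueness (for fixed spin structure) is a torsor over $[M,\mathrm{Spin}(4)]$, nontrivial via $\pi_3\mathrm{Spin}(4)=\Z^2$ paired with $H^3(M)=\Z$.

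Finally, for the bijection $f^*\co\Omega(M)\to\Omega(V)$: under the Pontryagin--Thom identification $\Omega(\cdot)\cong[\cdot,S^2]$ discussed earlier in the paper, $f^*$ is precomposition with $f$, hence a bijection since $f$ is a homotopy equivalence; geometrically it is the transverse-preimage construction on framed 1-manifolds. The main obstacle in the plan is the end analysis: making precise that $f^{-1}(p)$ has a noncompact component whose Borel--Moore class is the generator (needed to nail down the count of exactly two ends), and verifying that each end neighborhood inherits $H^*(M)$ from inclusion into $V$. Both are tameness/Wall-finiteness-style statements for an open 4-manifold with the homotopy type of a closed 3-manifold, and they feed directly into each of the subsequent steps.
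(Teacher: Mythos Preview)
Your plan diverges from the paper's argument in a substantive way, and the divergence is exactly where you identify the obstacle. The paper does \emph{not} analyze the cohomology of the ends. Instead it constructs a closed, oriented hypersurface $N\subset V$ representing the generator $\alpha\in H_3(V)$ (via a compactly supported map $\psi\co V\to S^1$ classifying the Poincar\'e dual of $\alpha$), observes that $f|N$ has degree $1$ so $(f|N)_*$ is surjective on homology with any field coefficients, and then represents any pair of classes in $H_*(V;\F)$ by cycles lying in $N$. Pushing one cycle off $N$ in the normal direction shows the intersection number vanishes. The separation of $V$ by $N$, the existence of two ends, and their ordering all follow \emph{after} this, using the now-established vanishing of $\Z/2$-intersection numbers and the equation $f^{-1}(p)\cdot N=+1$.

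Your route, by contrast, tries to deduce the vanishing of $j\co H^*_c(V)\to H^*(V)$ from injectivity of $H^*(V)\to H^*(\mathrm{End}(V))$, and for that you assert that restriction to each end is an isomorphism onto $H^*(M)$. That assertion is not available at this point: nothing in the hypotheses tells you the ends are tame or have the cohomology of $M$, and you acknowledge this. Without it, the exact sequence of the pair $(V,\text{ends})$ gives no leverage on $j$, and the intersection-pairing step does not close. The same gap infects your ``at least two ends'' step: to know that a noncompact component of $f^{-1}(p)$ runs between \emph{distinct} ends (equivalently, that the boundary map $H_1^{BM}(V)\to \tilde H_0(\mathrm{End}(V))$ is nonzero on the generator) you again need some control on the ends that you have not established. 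The paper's hypersurface $N$ supplies exactly this control by giving a concrete cycle to intersect with $f^{-1}(p)$.

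Your treatment of the spin structure and trivialization is correct and in fact slightly slicker than the paper's: factoring the classifying map through the $3$-complex $M$ and killing obstructions in $H^k(M;\pi_{k-1}\mathrm{Spin}(4))$ is equivalent to the paper's skeleton-by-skeleton extension in $\SO(4)$, and your observation about nonuniqueness via $\pi_3\mathrm{Spin}(4)$ is right. The $\Omega$ bijection is handled identically in both.
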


\begin{proof}
Recall that a manifold with $n$ ends has an exhaustion by compact, codimension-0 submanifolds whose complements each have $n$ components. These components have noncompact closures with connected boundaries. Each end then corresponds to a nested sequence of such components, whose boundaries we will call {\em cuts} of the end. If $V$ had more than two ends, we would have the contradiction $b_3(V)>1=b_3(M)$: Cuts of two ends would be linearly independent in $H_3(V)$, distinguished by pairing with elements of $H^3(V)$ dual to properly embedded lines emanating from a third end. To locate two distinct ends, let $\alpha\in H_3(V)$ be the generator mapping to $[M]\in H_3(M)$. Then $\alpha$ is dual to a compactly supported cohomology class. Since $H^1$ is classified by $K(\Z,1)=S^1$, there is a map $\psi\co V\to S^1$ that is constant outside some compact set, such that $\alpha$ is represented by $N=\psi^{-1}(\theta)$ for some regular value $\theta\in S^1$. Then $f|N$ has degree 1, so $(f|N)_*\co H_*(N;\F)\to H_*(M;\F)$ is surjective. A pair of classes in $H_*(V;\F)$ can be isomorphically pushed forward to $M$ and then pulled back to $N$, so they are represented by cycles in $N$. After pushing one of these in the positive normal direction, they will be disjoint. Thus, the intersection pairing on $H_*(V;\F)$ vanishes as required. In particular, each class in $H_1(V;\Z/2)$  has vanishing intersection number with $N$. The latter then separates $V$ into two regions (not necessarily connected unless $N$ is), with the region containing a given point $x$ determined by the mod 2 intersection number of $N$ with an arc from $x$ to a preassigned base point. For any regular value $p\in M$, $f^{-1}(p)\cdot N=p\cdot M=+1$. Thus, $V$ must have two distinct ends, one in each of the two regions of $V-N$. (Otherwise, $f^{-1}(p)$ could be modified to create a closed 1-cycle intersecting $N$ nontrivially.) Each cut of an end has intersection number $\pm1$ with  $f^{-1}(p)$ (since the latter has vanishing intersection number with the boundary of the cobordism between the cut and $N$). If we orient the cuts so that the intersection number is $+1$ (equivalently, so that they represent $\alpha$), there is a unique {\em positive} end for which the cuts are positively oriented boundary components of the compact regions defining the ends.

To complete the proof, note that the Wu formula $\langle w_2(V),\beta\rangle=\beta\cdot\beta$ for all $\beta\in H_2(V;\Z/2)$ (e.g.~\cite{GS}) implies $w_2(V)=0$, so $V$ admits a spin structure. Since $\pi_2(\SO(4))=0$, every spin structure extends to a trivialization over the 3-skeleton of $V$ (nonuniquely since $\pi_3(\SO(4))\ne0$). Since $V$ has no cohomology above dimension 3, we obtain an extension to all of $V$. The bijection $f^*\co\Omega(M)\to\Omega(V)$ follows immediately from functoriality of $\Omega$.
\end{proof}

We will say a connected, oriented 3-manifold $N\subset V$ with $f_*[N]=[M]$ {\em cuts} $V$. A cut separates $V$ into two components, each containing an end of $V$.

\begin{prop}\label{action}
There is a canonical $\Z$-action on $\J(V)$. Its orbit space is sent bijectively to $H^2(V)$ by $\Gamma$ for any fixed spin structure $s$. A sliced concordance between two such manifolds $V_0$ and $V_1$ determines an isomorphism of the corresponding $\Z$-spaces $\J(V_i)$ that preserves $\Gamma$.
\end{prop}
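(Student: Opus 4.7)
The plan is to reduce the statement to Pontryagin's classification of $[M,S^2]$ combined with Propositions~\ref{V} and~\ref{Gamma}. First I would fix a spin structure $s$ on $V$ and, by Proposition~\ref{V}, extend it to a trivialization $\tau$ of $TV$, obtaining an identification $\J(V)\cong[V,S^2]$. Since $V$ is homotopy equivalent to $M$, this set is further identified with $[M,S^2]$. Pontryagin's theorem classifies the latter by a primary obstruction in $H^2(M;\pi_2(S^2))=H^2(M)$ and, on each primary fiber, a free transitive action of $H^3(M;\pi_3(S^2))=\Z$ modulo the stabilizer $2\gamma\smile H^1(M;\Z)=\di(2\gamma)\Z$. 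The generator of $\Z$ acts geometrically on $\varphi\co M\to S^2$: choose a small 3-ball $B\subset M$ on which $\varphi|_{\partial B}$ is constant, and modify $\varphi|_B$ by the Hopf generator of $\pi_3(S^2,\mathrm{pt})$. Any two such balls are ambient isotopic, so this is well-defined on $[M,S^2]$; pulling back along the homotopy equivalence yields the required $\Z$-action on $\J(V)$. Canonicality requires independence of $\tau$, $s$, and the (homotopy-unique) map $f\co V\to M$: any two choices of $f$ differ by a self-homotopy equivalence of $M$, under whose pullback the ball modification is preserved after a small isotopy; different $\tau$ and $s$ translate the identification $\J(V)\cong[V,S^2]$ globally but commute with the localized Hopf modification.

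Next I would identify the orbit space with $H^2(V)$ via $\Gamma$. Proposition~\ref{Gamma} shows $\Gamma(J,s)$ coincides with the primary obstruction of the associated map $\varphi$ under $\tau$. Since the $\Z$-action modifies $\varphi$ only inside a ball, it preserves the primary obstruction and hence $\Gamma$, so $\Gamma$ is constant on orbits; Pontryagin's classification gives the converse. For surjectivity I would realize any $\gamma\in H^2(V)=H^2(M)$ by a properly embedded, framed surface $(F,\phi)\subset V$ Poincar\'e--Lefschetz dual to $\gamma$ (the framing exists since $V$ is parallelizable), and invoke Proposition~\ref{Gamma} to see that the associated $J$ satisfies $\Gamma(J,s)=\gamma$.

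Finally, a sliced concordance between $V_0$ and $V_1$ is a smoothing $W$ of $I\times V$ with smooth submersion $\pi\co W\to I$ restricting to $V_i$ at $\{i\}$. The vertical tangent bundle $\ker d\pi$ is a smooth oriented rank-4 bundle restricting to $TV_i$ at each end, and each inclusion $V_i\hookrightarrow W$ is a homotopy equivalence. Any $J_0\in\J(V_0)$ extends uniquely up to homotopy to an almost-complex structure on $\ker d\pi$ (obstructions vanish since $(W,V_0)$ retracts to a point), and restricting to $V_1$ defines the bijection $\J(V_0)\to\J(V_1)$. It preserves $\Gamma$ because the extended spin$^\C$-structure is unique, so its difference class with the spin$^\C$-structure induced by $s$ is constant over $W$ and restricts at each end to $\Gamma(J_i,s)$. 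It preserves the $\Z$-action because the same Pontryagin construction applied to $W\simeq M$ defines a canonical $\Z$-action on $\J(\ker d\pi)$ that restricts $\Z$-equivariantly to each $\J(V_i)$.

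The main obstacle I expect is rigorously establishing canonicality of the $\Z$-action, particularly independence from the trivialization $\tau$, since $\tau$ enters essentially in the identification $\J(V)\cong[V,S^2]$ and a priori different trivializations could shift the action. I would address this by characterizing the action intrinsically through framed codimension-2 submanifolds of $V$ coming from cuts of the ends (cf.\ Proposition~\ref{V}), bypassing $\tau$ entirely and letting naturality do the rest.
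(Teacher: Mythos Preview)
Your approach is essentially the same as the paper's: define the $\Z$-action by Hopf-splicing on $[M,S^2]\cong[V,S^2]\cong\J(V)$, identify $\Gamma$ with the primary Pontryagin obstruction via Proposition~\ref{Gamma}, and handle sliced concordance through the vertical bundle $\ker d\pi$. The paper carries this out a bit more geometrically, working simultaneously in $\Omega(M)$ and $\Omega(V)$ (adding a $+1$-framed unknot, or a framed tube along a line $L$ connecting the ends of $V$), whereas you invoke Pontryagin's classification more abstractly; both routes arrive at the same place.

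Two small points where your write-up is looser than the paper's. First, your reason for the existence of a normal framing on $F$ (``since $V$ is parallelizable'') is not quite enough: parallelizability gives $TF\oplus\nu F$ trivial, which for a closed component only forces $e(\nu F)=-\chi(F)$. The paper instead represents each class by a surface with no compact components (equivalently, pulls back a link from $M$), so $\nu F$ is a bundle over a $1$-complex and hence trivial; alternatively one can cite that the intersection form on $V$ vanishes. Second, for independence from $\tau$ the paper gives exactly the intrinsic description you anticipate at the end: after homotoping $\tau$ to be $J$-complex near $L$, the generator modifies $J$ only in a tube around $L$, so the resulting action on $\J(V)$ visibly does not see $\tau$. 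Your ``global translation commutes with local modification'' heuristic is correct, and the paper's argument is just the precise version of it.
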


\begin{proof}
We first construct several canonical $\Z$-actions, which will be of continued use. On $\Omega(M)$, such an action is generated by adding to any framed link a $+1$-framed unknot in a disjoint 3-ball. It is easy to see that this is well-defined on $\Omega(M)$, with inverse obtained from a $-1$-framed unknot, and it is equivalent to adding a right twist to the framing of one component of the link. The Thom--Pontryagin isomorphism now pulls this action back to $[M,S^2]$. When $M=S^3$, the resulting generator sends the constant map to the Hopf fibration. Thus, for general $M$, the generator modifies a given map $\varphi\co M\to S^2$ by first pinching the domain to $M\vee S^3$, then applying $\varphi$ to the first factor and the Hopf map to the second. The homotopy equivalence $f\co V\to M$ pulls back these actions to $\Omega(V)$ and $[V,S^2]$. The generator of the former changes a framed surface $F$, along an arbitrarily chosen line $L$ disjoint from $F$ and connecting the ends of $V$, by adding a properly embedded $\R\times S^1$ along $L$ with +1-framed normal bundle. This pulls back by the homotopy inverse of $f$ to the generator on $\Omega(M)$, so defines a $\Z$-action on $\Omega(V)$ (whose definition is independent of $(M,f)$). The generator on $[V,S^2]$ is similarly defined near $L$, first homotoping a given $\varphi\co V\to S^2$ to be constant near $L$, then splicing in a map that projects to the normal 3-disk of $L$ and applies the Hopf map to it. More generally, any homotopy equivalence $g\co W^n\to M$ induces a $\Z$-action on $[W,S^2]\cong[M,S^2]$, respecting homotopy equivalences of pairs $(W,g)$. (Every map $W\to S^2$ factors through $g$ after homotopy. Then replace $L$ in the construction by $g^{-1}(q)$ for a regular value $q\in M$.) The set $\J(V)$ is identified with $[V,S^2]$ by choosing a homotopy class of trivializations $\tau$ of $TV$. Given an almost-complex structure $J$, homotope $\tau$ to be a $J$-complex trivialization near $L$. Then the associated map to $S^2$ is constant near $L$, and splicing in the Hopf map as before corresponds to redefining $J$ by the generator on $\J(V)$. Since this only changes $J$ near $L$, the resulting $\Z$-action on $\J(V)$ does not depend on the homotopy class of $\tau$ (although the $\Z$-space isomorphism to $[V,S^2]$ does).

By Proposition~\ref{V}, $V$ admits a spin structure, and any such $s$ comes from a trivialization $\tau$. By Proposition~\ref{Gamma}, $\Gamma(\cdot,s)\co\J(V)\to H^2(V)$ corresponds, under the $\Z$-space isomorphism $\J(V)\cong\Omega(V)$ induced by $\tau$, to the canonical map $\eta$ sending each framed surface to its dual cohomology class. Since $H^2(V;\Z)$ is classified by $K(\Z,2)=\C P^\infty$, we can identify it with the set of cobordism classes of codimension-2 submanifolds of $V$. Then $\eta\co\Omega(V)\to H^2(V)$ simply forgets framings. It follows that $\eta$ is surjective since every class in $H^2(V)$ is represented by a surface with no compact components, whose normal bundle must then be trivial. Similarly, $\eta$ is constant on each $\Z$-orbit, since the generator adds a nullcobordant cylinder to each surface. Finally, $\eta$ is injective on the set of orbits. This is most easily seen by passing isomorphically to $\Omega(M)$. Any connected cobordism between two nonempty framed links in $M$ can be made into a framed cobordism after adding twists to one framing. Thus, classes sent to the same element of $H^2(V)$ lie in the same orbit in $\Omega(V)$. The second sentence of the proposition follows immediately.

A sliced concordance $W$ between $V_0$ and $V_1$ determines a homeomorphism between them, so identifies the groups $H^2(V_i)$ with each other and with $H^2(W)$. The bundles $TV_i$ extend to the trivial bundle $E\to W$ of tangent spaces to the slices (i.e., the kernel of the derivative of the given submersion to $I$). A complex structure $J_i$ on either $TV_i$ extends uniquely (up to homotopy) to a structure $J$ on $E$, determining a bijection between the sets $\J(V_i)$. This preserves the $\Z$-action, which a trivialization of $E$ identifies with that of $[W,S^2]$. Similarly, a spin structure $s_i$ extends uniquely to $s$ on $E$, identifying spin structures on $V_0$ with those on $V_1$.  Then $\Gamma(J,s)\in H^2(W)$ restricts to $\Gamma(J_i,s_i)$ on each $V_i$, so these correspond under the isomorphism between the spaces $H^2(V_i)$.
\end{proof}

\subsection{The invariant $\tilde{\Theta}$}%%%%

Recall that $\J(M)=\J(\R\times M)$ can be identified with the set of homotopy classes of plane fields on $M$, which was computed and applied in \cite{Ann}. In this special case, our present $\Gamma(J,s)$ equals the previous $\Gamma(\xi,s)$ \cite[Remark 2 following Corollary~4.9]{Ann}, although the present version is more general and perhaps more natural. We now complete the computation of $\Omega(M)\cong\J(M)$ as in \cite{Ann}, then define the remaining invariant $\tilde{\Theta}$ in our additional generality and use it to prove the remaining theorems of Section~\ref{TPC}.

\begin{prop}\label{orbit}
Each $\phi\in\Omega(M)$ lies in an orbit isomorphic to the $\Z$-space $\Z/2\di[\phi]$, where $[\phi]$ denotes the homology class of any 1-manifold in $M$ that when suitably framed represents $\phi$.
\end{prop}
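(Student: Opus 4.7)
My plan is to transfer the question to $[M,S^2]$ via Thom--Pontryagin and then analyze using the Postnikov tower of $S^2$. Under the chain of identifications $\J(M)\cong\Omega(M)\cong[M,S^2]$ assembled above, the $\Z$-action becomes (by Proposition~\ref{action}) the Hopf action: the generator takes $\varphi\co M\to S^2$ to the class obtained by pinching off a small $S^3\subset M$ and splicing in the Hopf map. By Proposition~\ref{Gamma}, $\varphi^*[S^2]\in H^2(M;\Z)$ is Poincar\'e dual to $[\phi]$, so it suffices to show that the Hopf orbit of $\varphi$ is $\Z$-isomorphic to $\Z/2\di(\varphi^*[S^2])$.

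For this I would invoke the Postnikov tower of $S^2$. Since $\pi_2(S^2)=\pi_3(S^2)=\Z$ and $M$ is $3$-dimensional, $[M,S^2]=[M,P_3S^2]$, where $P_3S^2$ is a principal $K(\Z,3)$-fibration over $K(\Z,2)$ classified by the cup-square $k$-invariant $u\cup u\in H^4(K(\Z,2);\Z)$. The associated Puppe sequence of pointed sets gives
\[
H^1(M;\Z)\xrightarrow{d_\alpha}H^3(M;\Z)\longrightarrow[M,S^2]\longrightarrow H^2(M;\Z)\xrightarrow{(\,\cdot\,)^2}H^4(M;\Z)=0,
\]
so the fiber over any $\alpha\in H^2(M;\Z)$ is a torsor under $H^3(M;\Z)/\operatorname{image}(d_\alpha)$, with $H^3(M;\Z)=\Z$ acting through the Hopf generator. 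Standard obstruction theory (functional cup-square, i.e., linearization of the squaring $k$-invariant) identifies $d_\alpha(\beta)=2\alpha\cup\beta$, the factor of $2$ coming from the symmetry of squaring.

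Finally, Poincar\'e duality converts cup product to intersection pairing: the image of $\cup\alpha\co H^1(M;\Z)\to H^3(M;\Z)=\Z$ equals $\{PD(\alpha)\cdot x:x\in H_2(M;\Z)\}=\di(PD(\alpha))\cdot\Z=\di[\phi]\cdot\Z$, by the definition of divisibility modulo torsion. Hence $\operatorname{image}(d_\alpha)=2\di[\phi]\cdot\Z$, and the orbit is isomorphic as a $\Z$-space to $\Z/2\di[\phi]$, as claimed.

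The main technical point is the identification of $d_\alpha$ with $\cup 2\alpha$ and of the abstract Postnikov torsor action with the explicit Hopf action on $\Omega(M)$; both are standard in obstruction theory, but warrant careful bookkeeping with the $k$-invariant and its looping.
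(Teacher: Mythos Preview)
Your argument is correct and recovers Pontryagin's original classification of $[M,S^2]$ via the Postnikov tower, but it takes a genuinely different route from the paper. The paper works directly and geometrically in $\Omega(M)$: represent $\phi$ by a framed knot $K$; any other class in the orbit is $K$ with $n$ twists added to its framing. If these are framed cobordant in $I\times M$, glue the ends to produce a closed surface in $S^1\times M$ whose self-intersection is $-n$, and a short computation with the intersection form of $S^1\times M$ forces $2\di[\phi]\mid n$. The converse is obtained by explicitly modifying the product cobordism $I\times K$ by a class in $H_2(M)$ dual to $[K]/\di[K]$. Your approach, by contrast, is homotopy-theoretic: you identify the orbit with the fiber of $[M,S^2]\to H^2(M)$ and compute its isotropy via the linearized $k$-invariant $d_\alpha=2\alpha\cup(\,\cdot\,)$. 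The paper's argument is more elementary and entirely self-contained---no Postnikov machinery, no identification of $k$-invariants---at the cost of being specific to this situation. Your argument is more conceptual and explains \emph{why} the factor of $2$ appears (symmetry of the cup square), but it leans on two facts you correctly flag as needing care: that the abstract $H^3$-torsor action matches the explicit Hopf splicing, and that $d_\alpha$ is exactly $\cup 2\alpha$ (this is the functional cup product/Steenrod computation, standard but not one-line). Either proof would be acceptable; the paper's has the virtue that a reader can verify every step on the page.
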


\begin{proof}
We can represent the framed cobordism class $\phi$ by a framed knot $K\subset M$. By our previous proof, any other class in the same orbit as $\phi$ can then be represented by the same knot, but with some number $n\in\Z$ of twists added to its framing. Suppose there is a framed cobordism in $I\times M$ between these framed knots. Glue the boundary components together by the identity to obtain a closed surface in $S^1\times M$ whose homology class $\alpha\in H_2(S^1\times M)$ has self-intersection $\alpha^2=-n$. For $\kappa=[S^1\times K]$, the class $\alpha-\kappa$ is represented by a cycle disjoint from $M$, so $(\alpha-\kappa)^2=0=\kappa^2$. Thus, $-n=((\alpha-\kappa)+\kappa)^2=2(\alpha-\kappa)\cdot\kappa$. Since $[\phi]=[K]$ in $H_1(M)$, we have $\di[\phi]=\di\kappa$, so $2\di[\phi]$ must be a factor of $n$. Conversely, we can write $[K]=\di[\phi]\beta$ and find $\zeta\in H_2(M)$ with $\beta\cdot\zeta=-1$. We can then modify the cobordism $I\times K$ so that the resulting $\alpha$ is $\kappa+\zeta$, implying $n=-2\kappa\cdot\zeta=2\di[\phi]$. Thus, $2\di[\phi]$ times the generator of the $\Z$-action is trivial on the orbit of $\phi$.
\end{proof}

To summarize our understanding of $\J(V)$ at this point, a trivialization $\tau$ of $TV$ and the given homotopy equivalence $f\co V\to M$, respectively, determine $\Z$-space isomorphisms $\J(V)\cong\Omega(V)\cong\Omega(M)$ (proof of Proposition~\ref{action}). Fix $J\in\J(V)$, and let $s$ be the spin structure determined by $\tau$. By Proposition \ref{Gamma}, the corresponding class $\Gamma(J,s)\in H^2(V)$ is dual to a surface $F$ in $V$ which, when suitably framed, represents the class in $\Omega(V)$ corresponding to $J$. This, in turn, corresponds to a class $\phi\in\Omega(M)$ as in Proposition~\ref{orbit}. The orbit of $J\in\J(V)$ then has order $2\di [\phi]=2\di\Gamma(J,s)=\di c_1(J)$. This orbit corresponds to the set of classes in $\Omega(V)$ whose underlying surfaces are dual to $\Gamma(J,s)$, so it can be understood in $\Omega(M)$ as framings on a knot in $M$. The main difficulty with this setup is that, while the orbit in $\J(V)$ is identified with an orbit in $\Omega(V)$ that is picked out by $s$, the bijection between the two orbits depends on the entire trivialization $\tau$ (Remark~\ref{tau}(a)). Since it is difficult to keep track of such a trivialization, we define an invariant that equivariantly compares elements of the two orbits without reference to $\tau$ beyond its induced spin structure $s$. We wish to compare a given $J$ with a class $\phi\in\Omega(V)$ in the corresponding orbit. Represent $\phi$ by a framed surface $F\subset V$. Let $\tau_J$ be a $J$-complex tangent trivialization of the complement $V-{\rm nbd}\thinspace F$ of a tubular neighborhood of $F$, respecting $s$ and representing twice the generator of $\pi_1({\rm U}(2))$ on each meridian of $F$. Such a $\tau_J$ exists by Proposition~\ref{Gamma}, since $F$ is dual to $\Gamma(J,s)$ and the Thom--Pontryagin construction exhibits it in the required form $\varphi^{-1}(p)$. Cut $V$ along a 3-manifold $N$ separating its ends and transverse to $F$. Glue the positive side $V_N$ of $V$ to a compact manifold whose boundary is $N=\overline{\partial V_N}$, obtaining a 4-manifold $Y$ whose unique end is the positive end of $V$. We show below that this can be done so that $J$ extends to an almost-complex structure $J_Y$ on $Y$. Then the relative Chern class $c_1(J_Y,\tau_J)\in H^2(Y,V_N-{\rm nbd}\thinspace F)$ is dual to an embedded surface $\tilde{F}$ in $Y$ that intersects $V_N$ in two copies of $F\cap V_N$ inside ${\rm nbd}\thinspace F$, parallel in the framing inherited from $\phi$. Any choice of this $\tilde{F}$ gives a normal Euler number $e(\nu\tilde{F},\phi)$ relative to the given framing on $F$.

\begin{de}\label{thetaDef}
For $J\in\J(V)$, a spin structure $s$ on $V$, and any $\phi\in\Omega(V)$ dual to $\Gamma(J,s)$, choose  $\tau_J$, $Y$ and $\tilde{F}$ as above and let $\tilde{\Theta}(J,s,\phi)=e(\nu\tilde{F},\phi)-2\chi(Y)-3\sigma(Y)$ in $\Z/4\di c_1(J)$.
\end{de}

\begin{thm}\label{class}
The invariant $\tilde{\Theta}$ is well-defined and invariant under sliced concordance (where $J$, $s$ and $\phi$ transform in the obvious way). It is also invariant under $\Z$ acting simultaneously on $\J(V)$ and $\Omega(V)$, with the positive generator on $\J(V)$ alone decreasing $\tilde{\Theta}$ by 4 and on $\Omega(V)$ alone increasing it by 4. The invariants $\Gamma$ and $\tilde{\Theta}$ together classify $\J(V)$, with the bijection $\Gamma(\cdot,s)$ onto $H^2(V)$ classifying the orbits and $\tilde{\Theta}(\cdot,s,\phi)$ identifying the orbit of each $J$ with an index-4 coset of $\Z/4\di c_1(J)$.
\end{thm}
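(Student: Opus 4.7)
The plan is to prove the four assertions of the theorem --- well-definedness of $\tilde\Theta$, invariance under sliced concordance, the two $\Z$-equivariance formulas, and the classification consequence --- by reducing each to the closed-manifold identity $c_1^2(Z) = 2\chi(Z) + 3\sigma(Z)$ together with additivity of $\chi$ and $\sigma$ under gluing along 3-manifolds. Existence of a valid $(Y, J_Y)$ is routine: cut $V$ along a smooth separating 3-manifold $N$ transverse to $F$ (e.g.\ a regular level as in the proof of Proposition~\ref{V}), cap $V_N$ by some smooth compact $W$ with $\partial W = N$, and extend $J|_N$ to an almost-complex structure on $W$ by successive obstruction theory, killing obstructions by connect summing with copies of $\pm\CP^2$ as needed. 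Since $V_N$ is homotopy equivalent to $M$ with vanishing intersection form by Proposition~\ref{V}, we have $\chi(Y) = \chi(W)$ and $\sigma(Y) = \sigma(W)$.

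Well-definedness is the crux. Given two choices $(W_i, J_{Y_i}, F_i, \tau_{J,i}, \tilde F_i)$, $i=0,1$, compatible with a common $(J, s, \phi)$, I would homotope both extensions to agree on a collar of $N$ (possible since each restricts to $J$ there) and form the closed almost-complex 4-manifold $Z = W_0 \cup_N \overline{W_1}$. Novikov additivity gives $\chi(Z) = \chi(Y_0) + \chi(Y_1)$ and $\sigma(Z) = \sigma(Y_0) - \sigma(Y_1)$; the closed surface $\hat F = \tilde F_0 \cup (-\tilde F_1)$, formed after first isotoping $F_0, F_1$ to coincide in $V_N$ and gluing the two parallel copies of $F \cap V_N$ on each side, is Poincar\'e dual to $c_1(J_Z)$ with $[\hat F]^2 = e(\nu\tilde F_0, \phi) - e(\nu\tilde F_1, \phi)$. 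Substituting into $c_1^2(Z) = 2\chi(Z) + 3\sigma(Z)$ yields $\tilde\Theta_0 \equiv \tilde\Theta_1$ modulo a congruence on $[\hat F]^2$; the fact that $\hat F$ is a doubling of the $F$-dual upgrades the resulting congruence to mod $4\di c_1(J)$. Independence from choices of $\tau_J$ and $F$ reduces to the same gluing with one of the two contributions degenerate.

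The remaining assertions are then short. Sliced-concordance invariance comes from applying this gluing one dimension higher: a sliced concordance between $V_0, V_1$ can be capped by a cobordism of the separating 3-manifolds, producing an almost-complex 5-dimensional bordism whose two boundary components are $Y_0$ and $Y_1$, and the same signature/Euler-characteristic accounting identifies the two $\tilde\Theta$-values. For $\Z$-equivariance, the generator on $\Omega(V)$ adds a $+1$-framed unknot in $M$, shifting the framing of one component of the framed link representing $[\partial F]$ by $+1$; via the doubling identity $e(\nu\tilde F, \phi) = 4\,e(\nu F, \phi)$ (reflecting that $\tilde F$ is two parallel copies of $F$ near $V_N$, each picking up the new framing twist) this shifts $\tilde\Theta$ by $+4$. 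Simultaneous-action invariance, which follows directly from the parallel description of both generators via a common trivialization of $TV$, then forces the generator on $\J(V)$ alone to shift $\tilde\Theta$ by $-4$. The classification claim is now immediate: Proposition~\ref{action} handles the orbit structure, Proposition~\ref{orbit} gives orbit size $\di c_1(J)$, and the shift formula forces $\tilde\Theta$ to enumerate $\di c_1(J)$ consecutive multiples of $-4$ in $\Z/4\di c_1(J)$ along each orbit --- that is, an index-4 coset.

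The principal obstacle is controlling the congruence modulo $4\di c_1(J)$ in the well-definedness step. This requires ensuring that $Z$ genuinely admits a compatible almost-complex structure (so one must control the homotopy on a collar of $N$), that $\hat F$ is constructed with framings that do not introduce phantom twists at the seam (the role of the parallel-copies condition in the definition of $\tilde F$), and that the naive $(\di c_1(J))^2$ congruence coming from a direct application of $c_1^2 = 2\chi + 3\sigma$ is sharpened to $4\di c_1(J)$ using the doubling relation between $F$ and $\tilde F$.
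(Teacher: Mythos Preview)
Your well-definedness argument has a genuine orientation gap. You propose forming $Z = W_0 \cup_N \overline{W_1}$ as a \emph{closed almost-complex} 4-manifold, but reversing the orientation of $W_1$ destroys its almost-complex structure: $J_{Y_1}|_{W_1}$ induces the original orientation, not its reverse, and conjugating to $-J_1$ would not match $J_0$ along $N$ (since both restrict to the same $J|_N$). Without a global almost-complex structure on $Z$ you cannot invoke $c_1^2(Z)=2\chi(Z)+3\sigma(Z)$, which is the heart of your argument. The paper avoids this by exploiting the fact that $Y_0$ and $Y_1$ share the open positive end $V_N$: it chooses a new cut $N_+$ far out in $V_N$, caps \emph{both} compact pieces $Y_i^c$ by the \emph{same} almost-complex $Z$ with $\partial Z=\overline{N_+}$, and applies the closed identity to each $X_i=Y_i^c\cup Z$ separately; the common contribution from $Z$ cancels in the difference. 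Your proposal also omits the separate argument for independence of $\tau_J$, which the paper handles by showing the difference class $c_1(J,\tau_J,\tau_J^+)$ is even (via compatibility with $s$), forcing the change in Euler number to lie in $4\di c_1(J)\Z$.

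Two further points. Your sliced-concordance step invokes an ``almost-complex 5-dimensional bordism,'' but odd-dimensional manifolds carry no almost-complex structure; the paper instead works with the rank-4 bundle $E$ of slice tangents over the sliced concordance $W$, cuts $W$ by a 4-manifold $N$ submersing to $I$ (constructed locally by flowing a 3-dimensional cut), and tracks the relative Euler number through a 1-manifold $L$ dual to $e(\nu\tilde P,\phi)$ inside a 3-manifold $\tilde P$. Finally, your derivation of the $\J(V)$-shift assumes simultaneous invariance is ``direct,'' but this is not obvious from the definition of $\tilde\Theta$: the invariant depends on $J$ through the extension $(Y,J_Y)$, not merely through a class in $\Omega(V)$. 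The paper instead computes the $\J(V)$-shift explicitly on the model $V=\C^2-\{0\}$ with the standard $J'$ (giving $\tilde\Theta=-2$) versus its mirror $J$ (giving $\tilde\Theta=+2$ via $X-\inter B$ for any closed complex surface $X$), using that the change is localized near $L$ and hence universal.
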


\noindent We identify the cosets precisely in Proposition~\ref{mod4} when $V$ is homeomorphic to $\R\times M$. The dependence of the invariants on $s$ is given in Proposition~\ref{vary}.

\begin{proof}
We show the almost-complex extension $Y$ exists as in \cite[Lemma~4.4]{Ann}. Let $Y^*$ be a compact, spin 2-handlebody with boundary $N$ and no 1-handles. Then $TY^*$ has a trivialization, identifying orthogonal complex structures over subsets of $Y$ with maps to $S^2$. Thus, $J|N$ can be extended over the cocores of the 2-handles and then over $Y^*-\{y\}$ for some $y\in\inter Y$. The final obstruction then lies in $[S^3,S^2]\cong\Z$. For a closed 4-manifold $X$, this obstruction is given by the integer $\frac14(c_1^2(J_X)-2\chi(X)-3\sigma(X))$ for $c_1(J_X)\in H^2(X-\{x\})\cong H^2(X)$. This can be killed by suitably summing with copies of $S^2\times S^2$ (cf.~proof of Theorem~\ref{main}(b)). Thus, we can modify $Y^*$ so that $J$ extends as required.

Now for fixed $J$, $s$, $\phi$, $F$ and $\tau_J$ on $V$, suppose we have two extensions $Y_i$ as above. Since $H_*(V)\cong H_*(M)$ is finitely generated, so is each $H_*(Y_i)$. Choose a new cut $N_+$ in $V$ close enough to the positive end that each compact region $Y_i^c\subset Y_i$ cut out by $N_+$ contains surfaces representing a basis for $H_2(Y_i;\Q)$. Then inclusion induces an epimorphism $H_2(Y_i^c;\Q)\to H_2(Y_i;\Q)$ whose kernel is represented by surfaces pairing trivially with every element of $H_2(Y_i^c;\Q)$ (since the image surfaces are nullhomologous). Thus, $\sigma(Y_i^c)=\sigma(Y_i)$. As in the previous paragraph, cap each $Y_i^c$ by the same compact manifold $Z$ with boundary $\overline{N_+}$, obtaining closed, almost-complex manifolds $X_i$. By Novikov additivity, $\sigma(X_i)=\sigma(Y_i^c)+\sigma(Z)=\sigma(Y_i)+\sigma(Z)$. Similarly, $c_1^2(X_i)= e(\nu\tilde{F}_i,\phi)+e(\nu\tilde{F}_Z,\phi)$, where the surfaces are defined analogously to $\tilde{F}$. Finally, $\chi(X_i)=\chi(Y_i^c)+\chi(Z)=\chi(Y_i)-\chi(V_{N_+})+\chi(Z)$. (Additivity of $\chi$ follows from the rational Mayer-Vietoris sequence since the homologies are finitely generated and the intersection has $\chi(N_+)=0$). Thus, if $\theta_i$ denotes $\tilde{\Theta}(J,s,\phi)$ computed using $Y_i$, we have $0=c_1^2(X_i)-2\chi(X_i)-3\sigma(X_i)=\theta_i+\Delta$, where $\Delta$ is independent of $i$. Hence, $\tilde{\Theta}(J,s,\phi)$ is independent of choice of extension $(Y,J_Y,\tilde{F})$.

To compare $\tau_J$ with another trivialization $\tau_J^+$ on $V-{\rm nbd}\thinspace F$ as above, put $\tau_J^+$ on $V_N-{\rm nbd}\thinspace F$ and $\tau_J$ elsewhere on $V-{\rm nbd}\thinspace F$ except on a collar $I\times N$. The obstruction to fitting these together is the relative Chern class $c=c_1(J,\tau_J,\tau_J^+)\in H^2(I\times N-{\rm nbd}\thinspace F,\ \partial I\times N-{\rm nbd}\thinspace F)$. Since both trivializations are compatible with $s$, $c$ reduces mod 2 to $w_2(I\times N-{\rm nbd}\thinspace F,\thinspace\tau_J,\tau_J^+)=0$. Thus, $c$ has a factor of 2. Since the two trivializations agree on meridians to $F$, $c$ evaluates trivially on the meridians. Hence, it is dual to a compact surface in $I\times N-{\rm nbd}\thinspace F$ whose boundary consists of meridians. This then extends to a closed surface $F_N$ in $I\times N$ with $[F_N]$ even. If we have computed $\tilde{\Theta}(J,s,\phi)$ as in the definition using $\tau_J$, the same data computes it relative to $\tau_J^+$, provided that we first splice $F_N$ into $\tilde{F}$. Since  $[F_N]$ is even, the method of Proposition~\ref{orbit} shows that $e(\nu\tilde{F},\phi)$ changes by a multiple of $4\di c_1(J)$, so $\tilde{\Theta}$ does not depend on the choice of $\tau_J$. (Ignoring $\tau_J$ gives a weaker invariant $\Theta_\phi(J)\in\Z/2\di c_1(J)$; see Remark~\ref{tau}(b).)

Given a sliced concordance $\pi\co W\to I$ between two manifolds $V_0$ and $V_1$ as above, we saw in proving Proposition~\ref{action} that structures $J_0$ and $s_0$ on $V_0$ uniquely determine $J_1$ and $s_1$ on $V_1$ (with $J_1$ determined up to homotopy and arbitrary within its class) through structures $J$ and $s$ on the bundle $E$ of tangents to the slices in $W$. Similarly, the homotopy equivalences $V_i\to W$, $i=0,1$, determine isomorphisms $\Omega(W)\cong\Omega(V_i)$ sending a given $\phi_0$ on $V_0$ to a class $\phi_1$ on $V_1$. Any framed surfaces $F_i\subset V_i$ representing $\phi_i$ will then comprise the boundary of a framed cobordism $(P,\phi)$ in $W$ (by the definition of the $\Omega$ functor when $W$ is not a smooth product). For $\phi_0$ dual to $\Gamma(J_0,s_0)$, $P$ is dual to $\Gamma(J,s)$, and Proposition~\ref{Gamma} gives a complex trivialization $\tau_J$ on $E|(W-{\rm nbd}\thinspace P)$. To prove invariance of $\tilde{\Theta}$, first suppose there is a compact 4-manifold $N$ cutting $W$, with $\pi|N$ a submersion to $I$. Then $\pi$ gives $N$ the form $I\times N_0$. Our construction cutting and capping $V_0$ to make $Y_0$ can now be extended by gluing a product with $I$ along $N$ to give a cobordism $Y$ from $Y_0$ to $Y_1$ with a submersion to $I$, and a complex structure $J_Y$ on the bundle of tangents to the levels that agrees with $J$ on the common part $W_N$ of their domains. Its Chern class $c_1(J_Y,\tau_J)$ is dual to a 3-manifold $\tilde{P}$ intersecting $W_N$ in two $\phi$-parallel copies of $P$, and intersecting each $Y_i$ in a suitable $\tilde{F}_i$. The normal Euler class $e(\nu \tilde{P},\phi)$ is dual to a compact 1-manifold $L\subset\tilde{P}$ whose intersection number with each $\tilde{F_i}\subset Y_i$ is $e(\nu \tilde{F_i},\phi_i)$. Since the orientation numbers of $\partial L$ add to 0, it follows that $e(\nu \tilde{F_0},\phi_0)=e(\nu \tilde{F_1},\phi_1)$. When $(W,N)=I\times (V_0,N_0)$, all three terms of $\tilde{\Theta}(J_i,s_i,\phi_i)$ agree for $i=0,1$. This proves in general that $\tilde{\Theta}(J,s,\phi)$ does not depend on the choices of $J$ and $F$ within their classes, so it is well-defined. For a general sliced concordance, $N$ may not exist globally, but we can still construct it locally, in $\pi^{-1}(U)$ for some interval neighborhood $U$ of any given $t_0$ in $I$. (Flow some $N_{t_0}$ transversely to $V_{t_0}$ in $W$, and obtain $U$ by compactness of $N_{t_0}$.) We can also arrange a proper homotopy equivalence $(V_{N_{t_0}},N_{t_0})\to(V_{N_t},N_t)$ for each $t\in U$. (For each $a,b\in U$, define $f_a^b\co V_{N_a}\to V_{N_b}$ using the flow on a sufficiently large compact subset of $V_{N_a}$ and the topological product structure farther away. Then continuity in the parameters $a,b$ implies $f_{t_0}^t$ and $f_t^{t_0}$ are homotopy inverses.)  Now all three terms of $\tilde{\Theta}(J|V_t,s|V_t, \phi|V_t)$ are constant for $t\in U$, so it is constant on the connected set $I$. Hence, $\tilde{\Theta}$ is invariant under sliced concordance.

To complete the proof, let $J'$ and $\phi'$ be obtained from $J$ and $\phi$ by the positive generator of the $\Z$-action. Proposition~\ref{action} shows that $\Gamma(J',s)=\Gamma(J,s)$, and its proof gives explicit descriptions of $J'$ and $\phi'$. Both are obtained by modification near a line $L$ that we can assume intersects $N$ in a single point $x$. A framed surface $F'$ representing $\phi'$ is obtained from $F$ for $\phi$ by adding a framed tube intersecting $N$ in a $+1$-framed unknot. To compute $\tilde{\Theta}(J,s,\phi')$, we cap this tube in $Y$ to create a surface $R$ diffeomorphic to $\R^2$, with relative normal Euler number $+1$. We obtain $\tau_J'$ by modifying $\tau_J$ in a neighborhood of $L\cap V_N$ containing $R$, adding two twists around the meridian of $R$. Then $\tilde{\Theta}(J,s,\phi')-\tilde{\Theta}(J,s,\phi)$ is the normal Euler number relative to $\phi'$ of the surface $\tilde{R}$ obtained from two copies of $R$, $\phi'$-parallel near infinity, by smoothing their intersection point. This Euler number is $+4$ since the intersection form is quadratic. (To check this, cap $R$ to a closed surface by adding a $\phi'$-framed 2-handle to $Y$.) To compute $\tilde{\Theta}(J',s,\phi)$, fix $F$ and note that $J'=J$ outside a neighborhood of $L$ disjoint from $F$. Since $\pi_2({\rm U}(2))=0$, we can obtain $\tau_{J'}$ by modifying $\tau_J$ near $L$. Construct $Y'$ for $J'$ from $Y$ for $J$ by a suitable connected sum at $x$, as in the first paragraph of the proof. Since this construction is localized near $L$, the change in $\tilde{\Theta}$ is determined by a single example. We take $J'$ to be the standard complex structure on $\C^2-\{0\}\approx\R\times S^3$. This gives $\tilde{\Theta}(J',s,\phi)=-2$, using $Y=\C^2$ with its unique spin structure $s$ and $\phi$ the framed cobordism class of the empty set (so $\tilde{F}$ is empty). The plane field $\xi'=TS^3\cap J'TS^3$ on $S^3$ corresponding to $J'$ is orthogonal to the right-handed Hopf fibration. In the proof of \cite[Theorem~4.16]{Ann}, it is shown that the plane field $\xi$ corresponding to $J$ is obtained from $\xi'$ by a reflection of $S^3$. (For $S^3$ identified with the unit quaternions so that $J'$ acts on the left, right multiplication induces a trivialization $\tau$ of $TS^3$ in which $\xi'$ is constant. A calculation shows that $\tau$ identifies the mirror image $\xi$ of $\xi'\in\J(S^3)$ with a left-handed Hopf map in $[S^3,S^2]$, so the negative generator sends $\xi'$ to $\xi$ as required.) For any closed complex surface $X$, a convex ball $B\subset X$ has $\xi'$ induced on its boundary. This appears as $\xi$ in the boundary orientation of $X-\inter B$. Thus, we can use the latter to compute $\tilde{\Theta}(J,s,\phi)=+2$, since the invariant vanishes for $X$ but removing $\inter B$ decreases its Euler characteristic by 1. Hence, $\tilde{\Theta}(J',s,\phi)-\tilde{\Theta}(J,s,\phi)=-4$ as required. Since the orbit of $J$ has exactly $\di c_1(J)$ elements, $\tilde{\Theta}(\cdot,s,\phi)$ now injects the orbit onto an index-4 coset of  $\Z/4\di c_1(J)$.
\end{proof}

\begin{cor}\label{sliced} If $V_0$ and $V_1$ are sliced concordant smoothings of a 4-manifold homotopy equivalent to a closed, oriented 3-manifold, then the $\Z$-spaces $\J(V_0)$ and $\J(V_1)$ are canonically isomorphic.
\end{cor}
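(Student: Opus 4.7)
The plan is to upgrade Proposition~\ref{action} to canonicity by invoking the classification in Theorem~\ref{class}. Proposition~\ref{action} already constructs, from any sliced concordance $W$ between $V_0$ and $V_1$, a $\Z$-space isomorphism $\Phi_W\co \J(V_0)\to\J(V_1)$ preserving $\Gamma$; what remains is to show that $\Phi_W$ does not depend on the choice of $W$.

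The key step is to use Theorem~\ref{class}: the pair $(\Gamma,\tilde{\Theta})$ classifies $\J$, and both invariants are sliced-concordance invariant. For any $J_0\in\J(V_0)$ with auxiliary data $(s_0,\phi_0)$, the image $\Phi_W(J_0)\in\J(V_1)$ is characterized as the unique class whose invariants, computed against the transported data $(s_W,\phi_W)$ on $V_1$, agree with $(\Gamma(J_0,s_0),\tilde{\Theta}(J_0,s_0,\phi_0))$. Hence $\Phi_W$ is determined entirely by the three bijections that $W$ induces between spin structures, framed cobordism classes, and degree-2 cohomology on $V_0$ and $V_1$.

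Each of these bijections factors through the corresponding invariant on $W$ itself---a spin structure on the bundle $E$ of slice tangents, a framed cobordism class in $\Omega(W)$, or a class in $H^2(W)$---restricted to the two ends. Since all three sets are homotopy invariants, classified respectively by $H^1(\cdot;\Z/2)$, $[\cdot,S^2]$, and $H^2(\cdot;\Z)$, and since both inclusions $V_0,V_1\hookrightarrow W$ are homotopy equivalences onto the underlying topological manifold of $W$, these bijections depend only on the topological homotopy type of $W$, not on its smooth structure. Thus any two sliced concordances $W,W'$ induce the same bijections and hence the same isomorphism $\Phi$. The main obstacle is really the underlying classification and concordance invariance of $\tilde{\Theta}$, already handled in Theorem~\ref{class}; once those are in hand, canonicity follows formally.
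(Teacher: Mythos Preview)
Your proposal is correct and follows essentially the same approach as the paper. Both arguments reduce canonicity to the fact that the transport of the auxiliary data $(s,\phi)$ is independent of the chosen sliced concordance, then invoke the classification and sliced-concordance invariance of $(\Gamma,\tilde{\Theta})$ from Theorem~\ref{class}. The paper phrases the independence of the auxiliary transport slightly differently: since $V_0$ and $V_1$ are by definition smoothings of a single topological 4-manifold, there is a fixed homeomorphism (the identity) and hence a distinguished homotopy class in $[V_0,V_1]$ through which $s$ and $\phi$ transport; your factoring through $W$ amounts to the same observation, since $W$ is topologically $I\times V$ and the inclusions $V_i\hookrightarrow W$ are the standard end inclusions. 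One small point worth tightening: spin structures form an $H^1(\cdot;\Z/2)$-torsor rather than being canonically identified with $H^1$, so the statement that their transport is a ``homotopy invariant'' implicitly relies on something like Theorem~\ref{spinc} (or the stable identification $\s(E)\cong\s(TW)$); the paper's parenthetical handles this by appealing directly to the distinguished homotopy equivalence.
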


\begin{proof}
A sliced concordance induces an isomorphism by Proposition~\ref{action}. Different sliced concordances give the same isomorphism since the invariants agree. (Note that by definition, there is a fixed homeomorphism $V_0\to V_1$, so a distinguished homotopy equivalence in $[V_0,V_1]$. Thus, a given $s$ and $\phi$ on $V_0$ canonically determine corresponding structures on $V_1$, independently of choice of sliced concordance.)
\end{proof}

\begin{proof}[Proof of Theorems~\ref{sliced0}, \ref{Ann} and \ref{Y}.]
The first of these is immediate. The second follows from Propositions~\ref{action} and \ref{Gamma} and Theorem~\ref{class} (with $V=\R\times M$). For the third, we are given a compact, spin 4-manifold $(Y,s)$ with $\partial Y=M$ and an almost-complex structure $J_Y$ on another smoothing of $\inter Y$. Since $H^3(Y;\Z/2)=0$, the latter smoothing is sliced concordant to the original. Thus, the given boundary collar smoothing $V$ is sliced concordant to the standard smoothing on $\R\times M$, and $J_Y$ corresponds to a structure $J$ on $Y$, restricting canonically to $\R\times M$. We can now compute the invariants for $J_Y|V$ directly in $Y$. We are given a surface $(F,\partial F)\subset (Y,M)$, twice whose Poincar\'e dual is $c_1(J)$. Since $2\Gamma(J,s)=c_1(J)$ on $Y$ and $H^2(Y)$ has no 2-torsion, the dual of $F$ equals $\Gamma(J,s)$. Thus, the restriction to $M$ of $\Gamma(J,s)$ is dual to $\partial F$ as required. To compute $\tilde{\Theta}(J,s,\phi)$ for any framing $\phi$ on $\partial F$, apply its definition with $Y$ as given, $\tau_J$ defined on $Y-{\rm nbd}\thinspace F$, and $\tilde{F}$ made from two copies of $F$ that are $\phi$-parallel near $M$. Then $e(\nu\tilde{F},\phi)=4e(\nu F,\phi)$ since the intersection pairing is quadratic. The result follows.
\end{proof}

\begin{Remarks}\label{tau}(a)
It is now easy to see that for a trivial $\R^4$-bundle $E\to X$, our bijection $\J(E)\to[X,S^2]$ depends crucially on a choice of trivialization $\tau$. If we ignore $\tau$, we are effectively allowing automorphisms of $E$. A nowhere zero section of $E$ splits any complex bundle structure $J$ as $\C\oplus L$ for a line bundle $L$ determined by its Chern class $c_1(L)=c_1(J)$. Thus, the resulting equivalence classes are classified by the Chern class $c_1(J)$. When $E=TV$ as above, we lose information from $\Gamma$ whenever $H^2(V)$ has 2-torsion. We can recover this by fixing a spin structure, but still cannot distinguish almost-complex structures lying in a given orbit of the $\Z$-action. For example, we lose all information if $M$ is a homology sphere, although there is a $\Z$-space isomorphism $\J(V)\cong\Z$ in that case.

(b) Several variations of $\tilde{\Theta}$ from \cite{Ann} generalize to sliced concordance invariants in the current setting. The 2-fold quotient $\Theta_\phi(J)\in\Z/2\di c_1(J)$ of $\tilde{\Theta}(J,s,\phi)$ does not depend on a spin structure and is obtained more simply, by letting $\tilde{F}$ be any surface dual to $c_1(J_Y)$, framed near the end of $Y$ by $\phi\in\Omega(V)$ dual to $c_1(J)|V_N$ (without reference to $\tau_J$ or $\Gamma(J,s)$). When $c_1(J)$ vanishes in rational homology, all of the information of $\tilde{\Theta}(J,s,\phi)$ is captured by an invariant $\theta(J)\in\Q$ that is independent of $s$ and $\phi$. This is obtained by representing $c_1(J_Y)$ as a rational multiple of a closed surface. The orbit of $J$ then realizes a coset of $4\Z$ in $\Q$. When $V$ is homeomorphic to $\R\times M$, the cosets of these invariants are determined by simple data on $M$; see Proposition~\ref{mod4} and Remark~\ref{mod4Rem}. The invariants including $\Gamma$ can be algorithmically computed for contact structures exhibited as Stein boundaries. For plane fields on a 3-manifold $M$,  other relations and properties appear in \cite{Ann} and \cite[Section~11.3]{GS}. These properties generalize at least to the case of $V$ homeomorphic to $\R\times M$ (cf.~proof of Proposition~\ref{mod4}). The invariant $\theta$ is now well-known for plane fields and sometimes assumed to be equivalent to Pontryagin's secondary obstruction. However, there is additional information encoded in $\theta$ that may not be fully understood. For example, Giroux \cite{Gi} and Honda~\cite{H} independently classified tight contact structures on lens spaces, distinguishing them, respectively, by $\theta$ and $\Gamma$. Thus, $\theta$ has been used to compare plane fields with different values of the primary obstruction $\Gamma$ (but some with the same $c_1$), which is beyond the capability of a secondary obstruction.  Similarly, $\Theta_\phi$ relates structures with the same infinite order $c_1$ but different $\Gamma$.
\end{Remarks}

\section{Sliced concordance classes and the functor $\J$}\label{Ends}%%%%%%%%%%%

Having completed the proof of our main theorem exhibiting TPC embeddings, we further study the $\Z$-space $\J(V)$ when $V$ is a smoothing of $\R\times M$. We determine the coset of $\Z/4\di c_1(J)$ comprising the image of an orbit under $\tilde{\Theta}$, using a generalization to $V$ of the Rohlin invariant for spin 3-manifolds. This allows us to exhibit $\J$ as a functor on such spaces $V$ and homeomorphisms between them. While $\J$ is obviously functorial for diffeomorphisms via the corresponding tangent bundle isomorphisms, unsmoothable homeomorphisms are more subtle. The Rohlin invariant also allows us to  study the relation between diffeomorphism and sliced concordance of smoothings of $\R\times M$, as well as topologically collared ends. We can then show that up to sliced concordance, every smoothing $V$of $\R\times M$ and $J\in\J(V)$ are realized holomorphically, by an embedding in a closed complex surface, although for the nonstandard concordance class, the embedding is quite different from our previous TPC examples. We leave some open questions and a conjecture.

\subsection{Dependence of the invariants on auxiliary data}%%%%

To exhibit $\J$ as homeomorphism-invariant by functoriality, we must understand how our invariants depend on $s$ and $\phi$. We first need some simpler functors.

\begin{thm}\label{spinc}\cite{spinc} There are functors $\s$ and $\s^\C$ that assign to each manifold its set of spin (resp.~spin$^\C$) structures, and assign to each (orientation-preserving) proper homotopy equivalence a bijection preserving the action of $H^1(\thinspace\cdot\thinspace,\Z/2)$ (resp.~ $H^2(\thinspace\cdot\thinspace,\Z)$) and the map from spin to spin$^\C$-structures. These extend the obvious functors on diffeomorphisms and sliced concordances. \qed
\end{thm}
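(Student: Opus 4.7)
The strategy is to lift the assignment of spin and spin$^\C$-structures from the tangent bundle (which is not a proper-homotopy invariant) to homotopy-invariant data, either the Spivak normal fibration or, equivalently, Wu-class data on the cohomology ring. I would begin by recalling that for an oriented topological (or Poincar\'e duality) manifold $X$, the Wu formula expresses the Stiefel--Whitney classes $w_i(TX)$ entirely in terms of Poincar\'e duality, the cup product on $H^*(X;\Z/2)$, and the Steenrod operations. Consequently the condition $w_2(TX)=0$ is invariant under orientation-preserving proper homotopy equivalences, and, when it holds, the set of spin structures is a nonempty torsor over $H^1(X;\Z/2)$; the analogous statement for spin$^\C$-structures gives a torsor over $H^2(X;\Z)$.

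The next step is to realize the bijection $f^*\co\s(Y)\to\s(X)$ for a proper homotopy equivalence $f$ as the transport of structures along a canonical identification of Spivak normal fibrations. Since the Spivak normal fibration $\nu_X$ of an oriented Poincar\'e duality space is itself a proper-homotopy invariant, $f$ determines a canonical fiber-homotopy class of equivalences $\nu_X\simeq f^*\nu_Y$, along which $\mathrm{Spin}$ (resp.\ $\mathrm{Spin}^\C$) lifts pull back. These lifts correspond to spin (resp.\ spin$^\C$) structures on $X$ and $Y$ because the stable normal bundle stably refines the Spivak fibration in the smooth category. An equivalent but more concrete route is to choose a CW structure on $X$ and a cellular approximation of $f$, characterize spin structures as homotopy classes of trivializations of $TX$ over the 2-skeleton extending over the 3-skeleton, and transport the resulting obstruction cocycles using naturality of Wu classes; both approaches yield the same bijection.

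With the bijection in place, equivariance under the cohomological action is a direct naturality check, because the action is defined at the level of difference classes which pull back. Compatibility with the spin-to-spin$^\C$ map follows from the inclusion $\mathrm{Spin}\hookrightarrow\mathrm{Spin}^\C$ commuting with the Spivak-level lifts. For diffeomorphisms the Spivak identification comes from the genuine tangent-bundle isomorphism, reproducing the classical pushforward; for a sliced concordance $W$, the submersion to $I$ gives a coherent identification of slice-tangent bundles, and the inclusions $V_i\hookrightarrow W$ are proper homotopy equivalences whose induced maps agree with the one coming from the sliced product structure.

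The main obstacle is strict functoriality: the Spivak normal fibration is canonical only up to contractible choice, and composition of proper homotopy equivalences could \emph{a priori} twist the resulting bijection. I would handle this by making the Spivak data part of the torsor description so that all ambiguities descend trivially to $\pi_0$, and by verifying that any two admissible fiber-homotopy equivalences $\nu_X\simeq f^*\nu_Y$ differ by a section of a bundle of contractible spaces and therefore induce the same map on torsors of lifts. The remaining content --- that the extension agrees with the obvious functor on diffeomorphisms and on sliced concordances --- is then a matter of tracing definitions, using uniqueness of the tangent microbundle up to isotopy in the smooth case.
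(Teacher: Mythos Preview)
Your proposal is correct and follows essentially the same route the paper indicates: the theorem is quoted from \cite{spinc} with only a one-sentence sketch here, namely that for homeomorphisms one uses the $\pi_1$-isomorphism $\SO(n)\to{\rm STop}(n)$ to redefine the structure groups topologically, and for general proper homotopy equivalences one passes to the classifying space ${\rm BSG}$ of Spivak normal fibrations. Your Spivak-fibration transport of lifts, together with the Wu-class check that the relevant obstructions are proper-homotopy invariant and the torsor argument for strict functoriality, is exactly this program carried out in more detail; the diffeomorphism and sliced-concordance compatibilities you verify are likewise what the cited paper establishes.
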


\noindent For homeomorphisms, our main case of interest, the basic idea is that the map $\SO(n)\to{\rm STop}(n)$ is a $\pi_1$-isomorphism, so we can replace the former by the latter in constructing the structure groups. For proper homotopy equivalences in general, one needs to pass to classifying spaces and replace ${\rm BSTop}$ with the classifying space ${\rm BSG}$ of Spivak normal fibrations. The functor $\J$ is more difficult to deal with since it involves higher homotopy. However, the functor $\Omega$ from the previous section is immediately applicable in this section since each homeomorphism is homotopic to a smooth homotopy equivalence, uniquely up to smooth homotopy.

We can now describe how our invariants vary with the auxiliary data. Given a proper homotopy equivalence $V\to\R\times M$, $J\in\J(V)$ and $s_+,s_-\in\s(V)\cong\s(\R\times M)\cong\s(M)$, let $d(s_+,s_-)\in H^1(V;\Z/2)\cong H^1(M;\Z/2)$ denote the difference class. Given $\phi_\pm\in\Omega(V)$ dual to $\Gamma(J,s_\pm)$, respectively, choose corresponding surfaces $F_\pm\subset V$.

\begin{prop} \label{vary}
The difference $\Gamma(J,s_+)-\Gamma(J,s_-)$ is given by the Bockstein of $d(s_+,s_-)$. The difference $\tilde{\Theta}(J,s_+,\phi_+)-\tilde{\Theta}(J,s_-,\phi_-)$ is given by $e(\nu\hat F,\phi_\pm)$, where $\hat F\subset V$ is any surface that agrees near the $\pm$ end of $V$ with two $\phi_\pm$-parallel copies of $F_\pm$ and whose class $[\hat F]\in H_2(M;\Z/2)$ is dual to $d(s_+,s_-)$.
\end{prop}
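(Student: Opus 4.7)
The first identity is tautological once the definitions are unwound. By Definition~\ref{defGamma}, $\Gamma(J,s)$ is the difference of the spin$^\C$ structures induced by $J$ and $s$, so
\[
\Gamma(J,s_+) - \Gamma(J,s_-) \;=\; \text{spin}^{\C}(s_-) - \text{spin}^{\C}(s_+).
\]
The discussion preceding Definition~\ref{defGamma} records that the map from $\s$ to $\s^{\C}$ is equivariant under the Bockstein $\beta\co H^1(V;\Z/2)\to H^2(V;\Z)$, so the right-hand side equals $\pm\beta(d(s_+,s_-))$. The sign is immaterial since $\beta$ lands in 2-torsion.

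For the $\tilde\Theta$ identity, the plan is to compute both invariants using the \emph{same} compact almost-complex extension $(Y,J_Y)$ of the positive end of $(V,J)$; this is permissible because the choice of extension constrains only $J$. Then the $-2\chi(Y)-3\sigma(Y)$ terms cancel, leaving
\[
\tilde\Theta(J,s_+,\phi_+) - \tilde\Theta(J,s_-,\phi_-) \;=\; e(\nu\tilde F_+,\phi_+) - e(\nu\tilde F_-,\phi_-),
\]
where $\tilde F_\pm\subset Y$ is Poincar\'e dual to $c_1(J_Y,\tau_{J,\pm})$ for $J$-complex trivializations $\tau_{J,\pm}$ on $Y-\text{nbd}\thinspace F_\pm$ compatible with $s_\pm$ (by Proposition~\ref{Gamma}).

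Since both $\tilde F_\pm$ represent the absolute class $c_1(J_Y)\in H^2(Y;\Z)$, I arrange them to coincide outside a collar of the end $V_N$; their formal difference then lives in that collar, realized geometrically by two $\phi_+$-parallel copies of $F_+$ joined across the collar to two $\phi_-$-parallel copies of $F_-$. Extending this cobordism symmetrically into the negative half of $V$ produces the surface $\hat F\subset V$ described in the statement. Because the $\tilde F_\pm$ agree outside the collar, and the negative half contributes nothing to the Euler-number computation in $Y$, the resulting difference equals the relative normal Euler number $e(\nu\hat F,\phi_\pm)$ computed in $V$, with $\phi_+$ framing $\hat F$ at the positive end and $\phi_-$ at the negative end.

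The principal obstacle is identifying the mod-2 class $[\hat F]\in H_2(V;\Z/2)\cong H_2(M;\Z/2)$ with the Poincar\'e dual of $d(s_+,s_-)$. On the overlap $V-\text{nbd}\thinspace(F_+\cup F_-)$, the trivializations $\tau_{J,\pm}$ induce spin structures equal to the restrictions of $s_\pm$, so they differ by $d(s_+,s_-)$. Since the map $\U(2)\to\SO(4)$ realizes $\pi_1(\U(2))=\Z\to\pi_1(\SO(4))=\Z/2$ as reduction mod~2, the integer relative Chern class $c_1(J_Y,\tau_{J,+})-c_1(J_Y,\tau_{J,-})$ is an integral lift of $d(s_+,s_-)$ realizing the Bockstein---precisely the first part of the proposition applied in the relative situation in $Y$. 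Poincar\'e-dualizing this obstruction inside the 3-manifold $M$ (through the homotopy equivalence $V\simeq M$) then identifies $[\hat F]$ with the Poincar\'e dual of $d(s_+,s_-)$, as claimed.
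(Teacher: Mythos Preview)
Your approach is essentially the paper's: compute both values of $\tilde\Theta$ using the same almost-complex extension $Y$ and compare the surfaces $\tilde F_\pm$ dual to the relative Chern classes. The paper organizes the comparison via two cuts $N_-$, $N_+$ and builds $\tilde F_+$ from $\tilde F_-$ by explicitly splicing the portion of $\hat F$ lying between the cuts; your version instead tries to make $\tilde F_\pm$ coincide in the cap. That step needs more care than you give it: by Definition~\ref{thetaDef}, $\tilde F_\pm\cap V_N$ is two $\phi_\pm$-parallel copies of $F_\pm$, so at the cut $N$ the two surfaces have genuinely different boundary data, and ``arranging them to coincide'' in the cap is not automatic. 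The paper's two-cut formulation sidesteps this by letting the splice region absorb the discrepancy; you would need to invoke the well-definedness of $\tilde\Theta$ under moving the cut (proof of Theorem~\ref{class}) to loosen the constraint on $\tilde F_\pm$ near $N$.

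The genuine omission is that the proposition asserts the formula for \emph{any} surface $\hat F$ with the stated asymptotics and mod-2 class, not just the specific one you build from $c_1(J_Y,\tau_{J,+})-c_1(J_Y,\tau_{J,-})$. You must check that any other admissible $\hat F'$ gives the same value of $e(\nu\hat F',\phi_\pm)$ in $\Z/4\di c_1(J)$. The paper does this by noting that $\hat F'-\hat F$ is a compact surface with even $\Z/2$-homology class (both are dual to $d(s_+,s_-)$), and then invoking the argument of Proposition~\ref{orbit} to see that the Euler numbers differ by a multiple of $4\di c_1(J)$. Without this step you have only established the identity for one particular $\hat F$, which is strictly weaker than what is claimed.
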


\begin{proof}
The first sentence follows immediately from Definition~\ref{defGamma} and preceding text. For the rest, note that for $\tau^{\pm}_J$ associated to $F_\pm$ and $s_\pm$ as in Definition~\ref{thetaDef} near the $\pm$-end of $V$, $c_1(J,\tau^+_J,\tau^-_J)$ is represented by a surface $\hat F$ with the required properties. (Note that $\hat F$ is dual to $w_2(V,s_+,s_-)$ in the compactly supported $\Z_2$-cohomology of $V$. After the homotopy equivalence to $\R\times M$, this can be identified with $[\R]\times d(s_+,s_-)$, so $[\hat F]$ is dual to $d(s_+,s_-)$ in $H^1(M;\Z/2)$.) It suffices to use this $\hat F$, since any other $\hat F'$ as in the proposition differs from this $\hat F$ by an even (compact) homology class, so has the same Euler number as $\hat F$ mod $4\di c_1(J)$ as in the proof of Proposition~\ref{orbit}. Compute $\tilde{\Theta}(J,s_-,\phi_-)$ as usual, by capping $V$ at a cut near its negative end and constructing $\tilde{F}$ using $F_-$ and $\tau_J^-$. Use this same manifold $Y$ to compute $\tilde{\Theta}(J,s_+,\phi_+)$, by thinking of the cut as near its positive end. For this, we change $\tilde{F}$ between the cuts by splicing in $\hat F$, and extend to the positive end using $F_+$ and $\tau_J^+$. The normal Euler number increases by $e(\nu\hat F,\phi_\pm)$, and the other terms of $\tilde{\Theta}$ are unaffected.
\end{proof}

\subsection{Rohlin invariants, $\tilde{\Theta}$ mod 4, and functoriality of $\J$}%%%%%%

Next, we generalize the Rohlin invariant of spin 3-manifolds to 4-manifolds $V$ homotopy equivalent to 3-manifolds.

\begin{de}
Let $N$ be a 3-manifold cutting $V$ (so separating its ends). The {\em Rohlin invariant} of a spin structure $s$ on $V$ is that of $N$ with the restricted spin structure, $\mu(V,s)=\mu(N,s)\in\Z/16$, where the latter is the signature mod 16 of any compact, spin manifold with spin boundary $(N,s)$.
\end{de}

\begin{prop}\label{mu}
The invariant $\mu(V,s)$ is well-defined and invariant under both sliced concordance and diffeomorphism. Its residue mod 8 is homeomorphism invariant (where $s$ transforms as in Theorem~\ref{spinc}). It equals the mod 16 signature of any open spin manifold $Y$ made by cutting and capping the negative end of $V$ as for Definition~\ref{thetaDef}.
\end{prop}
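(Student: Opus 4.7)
The plan is to unify all five assertions through the same cobordism-in-$V$ argument: any two cuts of $V$ cobound a compact cobordism $C$, and the trick is to force $\sigma(C)=0$ using the vanishing intersection form on $V$ from Proposition~\ref{V}. Executing this for well-definedness, I would take disjoint smooth cuts $N_1,N_2$, the compact spin cobordism $C\subset V$ they bound, and—after pushing the cuts deeper into their respective ends—arrange $H_2(C;\Q)\twoheadrightarrow H_2(V;\Q)$. As in the proof of Theorem~\ref{class}, the kernel lies in the radical of the intersection form (a class bounding in $V$ has $0$ algebraic intersection with anything in $C$, since that intersection realizes the boundary of a $1$-chain); the quotient then injects into $H_2(V;\Q)$ with vanishing pairing, so $\sigma(C)=0$. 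Capping on both sides gives a closed spin $4$-manifold, and Rohlin's theorem yields $\mu(N_1,s|N_1)\equiv\mu(N_2,s|N_2)$ mod $16$. The formula $\mu(V,s)\equiv\sigma(Y)$ mod $16$ is the same calculation in a capped setting: for a compact core $Y^c=Z\cup C^+$ with $C^+$ large enough to carry a basis of $H_2(V)$, Novikov additivity gives $\sigma(Y^c)=\sigma(Z)+\sigma(C^+)=\sigma(Z)\equiv\mu(V,s)$ mod $16$, and exhausting $Y$ by such cores shows $\sigma(Y)$ is well-defined and equals this.

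Sliced concordance invariance I would handle by flow: integrate any smooth lift of $\partial/\partial t$ through the submersion $W\to I$ to carry a smooth cut $N_0\subset V_0$ to a cut $N_1'\subset V_1$. The resulting $4$-dimensional trace is diffeomorphic to $I\times N_0$ and, equipped with the unique spin extension over $W$, restricts to a spin diffeomorphism $(N_0,s|N_0)\cong(N_1',s|N_1')$. Since $N_1'$ is a cut of $V_1$, well-definedness gives $\mu(V_0,s)=\mu(V_1,s)$. Diffeomorphism invariance is immediate by transporting a smooth cut.

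The principal obstacle is the mod $8$ homeomorphism claim. Given a homeomorphism $f\co V_0\to V_1$ and $s\in\s(V_1)$, let $s_0=f^*s$ via Theorem~\ref{spinc} and pick smooth cuts $N_i\subset V_i$. The image $f(N_0)$ is only a topologically embedded cut of $V_1$, with topological spin structure matching $s_0|N_0$ under $f|N_0$. Since every homeomorphism of spin $3$-manifolds is spin-isotopic to a diffeomorphism (dimension $3$ smooths uniquely), $(N_0,s_0|N_0)$ and $(f(N_0),s|f(N_0))$ are spin diffeomorphic, with equal Rohlin invariants mod $16$. To close the argument I still need $\mu(f(N_0),s|f(N_0))\equiv\mu(N_1,s|N_1)$ mod $8$, and this is where the mod $8$ limitation enters: although the well-definedness cobordism argument runs in the topological category inside $V_1$ (Proposition~\ref{V} and the kernel-in-radical manipulations are purely homological), the resulting compact topological spin cobordism between $f(N_0)$ and $N_1$ may fail to smooth, so after capping to a closed topological spin $4$-manifold we only know its signature is divisible by $8$ (divisibility by $16$ failing exactly by the Kirby--Siebenmann invariant, by the topological version of Rohlin). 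The delicate verification is that topological Novikov additivity and the vanishing-intersection-form argument survive in this setting, yielding the mod $8$ equality and so $\mu(V_0,s_0)\equiv\mu(V_1,s)$ mod $8$.
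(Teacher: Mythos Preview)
Your proposal is correct and follows essentially the same approach as the paper: the vanishing intersection pairing on $V$ forces $\sigma=0$ on the cobordism between cuts, capping and Rohlin (replaced by the mod-8 evenness argument in the topological case) give the congruences, and the last claim is the $\sigma(Y)=\sigma(Y^c)$ computation already done in the proof of Theorem~\ref{class}. Two minor notes: your surjectivity maneuver for $\sigma(C)=0$ is unnecessary, since geometric intersection numbers of $2$-cycles in $C$ agree with those computed in $V$ regardless of what $H_2(C;\Q)\to H_2(V;\Q)$ does; and the paper handles sliced concordance by local constancy (a cut of $V_{t_0}$ also cuts nearby levels, by local triviality of the submersion near the compact cut) rather than by a global flow, which spares you the word of justification you owe about completeness of the lifted vector field on the noncompact $W$.
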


\noindent We always assume homeomorphisms and diffeomorphisms among such manifolds $V$ preserve both the orientation and the order of the ends.

\begin{proof}
Cut $V$ by a pair of disjoint 3-manifolds $N_\pm$, with $N_+$ on the positive side of $N_-$. The region between these cuts has signature 0 since all intersection numbers in $V$ vanish. Thus, if we cap the ends by compact manifolds $Y_\pm$ to make a closed spin manifold $X$, we have $\sigma(X)=\sigma(Y_+)+\sigma(Y_-)\equiv 0$ mod 16 by Novikov additivity and Rohlin's Theorem. For a different choice of $N_-$, we can assume $N_+$ is on the positive side of both choices, and use the same $Y_+$ for both. Then any choices of $Y_-$ will have the same signature mod 16, showing $\mu(V,s)$ is well-defined. Given a sliced concordance, we can cut any level $V_t$ by a 3-manifold that also cuts nearby levels, so $\mu(V_t,s)$ is locally, hence globally, constant on $I$. A given diffeomorphism $\varphi\co V\to V'$ will send a given $s$ and $N_-$ to corresponding structures in $V'$, and we can use the same $Y_-$ to compute the invariant for both. If $\varphi$ is only a homeomorphism, $\varphi(N_-)$ is only topologically embedded, so the resulting $X$ is only a topological spin manifold. Rohlin's Theorem no longer applies, but the intersection pairing of $X$ is still even and unimodular, so $\sigma(X)$ is still divisible by 8. For $Y$ as given, the proof of Theorem~\ref{class} shows that $\sigma(Y)=\sigma(Y^c)$, where the latter manifold is a cap for $V$, and the latter signature mod 16 is $\mu(V,s)$ by definition.
\end{proof}

Now for $V$ homeomorphic to $\R\times M$, we can sharpen the classification of $\J(V)$ by computing the image of $\tilde{\Theta}$. This is entirely determined by the spin 3-manifold $(M,s)$.

\begin{prop}\label{mod4}
For $V$ homeomorphic to $\R\times M$ and $J$, $s$ and $\phi$ on $V$, we have the mod 4 congruence $\tilde{\Theta}(J,s,\phi)\equiv 2(1+b_1(M))-\mu(M,s)$.
\end{prop}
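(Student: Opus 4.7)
The plan is to apply Corollary~\ref{2hCplx} and Theorem~\ref{Y}, which together reduce the problem modulo 4 to a mod 2 computation of $\chi(Y)$ for a particular spin 2-handlebody $Y$.

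Concretely, by Corollary~\ref{2hCplx} I would pick a spin complex surface $Y$ obtained by adding 2-handles to $B^4$ with $\partial Y = (M, J, s)$. Since $Y$ is simply connected with torsion-free $H^2(Y)$, the hypotheses of Theorem~\ref{Y} hold. Choose $(F,\partial F) \subset (Y, M)$ with $2[F]$ Poincar\'e dual to $c_1(J_Y)$ (available because $Y$ is spin) and a framing $\phi$ on $\partial F$ representing the prescribed class in $\Omega(M)$. Theorem~\ref{Y} then gives $\tilde{\Theta}(J, s, \phi) \equiv 4e(\nu F, \phi) - 2\chi(Y) - 3\sigma(Y) \pmod{4\di c_1(J)}$, which reduces modulo 4 (using $-2 \equiv 2$ and $-3 \equiv 1$) to $\tilde{\Theta} \equiv 2\chi(Y) + \sigma(Y) \pmod 4$.

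Rohlin's theorem yields $\sigma(Y) \equiv \mu(M, s) \pmod{16}$. The main step that I expect to require the most care is the congruence $\chi(Y) \equiv 1 + b_1(M) + \mu(M, s) \pmod 2$. Since $Y$ has a single 0-handle and only 2-handles, $\chi(Y) = 1 + b_2(Y)$. The long exact sequence of $(Y, M)$ combined with $H_1(Y) = H_3(Y) = 0$ and Poincar\'e--Lefschetz duality shows that the image of $H_2(M; \R) \to H_2(Y; \R)$ has rank $b_1(M)$; this image coincides with the radical of the intersection form on $H_2(Y; \R)$ (a standard fact for compact oriented 4-manifolds with boundary). So $b_2(Y) = b_2^+(Y) + b_2^-(Y) + b_1(M)$ while $\sigma(Y) = b_2^+(Y) - b_2^-(Y)$, giving $b_2(Y) \equiv b_1(M) + \sigma(Y) \equiv b_1(M) + \mu(M, s) \pmod 2$, as required.

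Combining, $\tilde{\Theta} \equiv 2(1 + b_1(M) + \mu(M,s)) + \mu(M,s) \equiv 2(1 + b_1(M)) + 3\mu(M, s) \equiv 2(1 + b_1(M)) - \mu(M,s) \pmod 4$, as claimed. One subtlety to address is that Corollary~\ref{2hCplx} only produces the standard sliced concordance class on $V$; for a nonstandard smoothing the same argument should be redone using Definition~\ref{thetaDef} directly with $Y = V_N \cup W$ for a spin 2-handlebody filling $W$ of a cut $N$, appealing to the homeomorphism invariance of $\mu$ mod $4$ from Proposition~\ref{mu} together with the fact that the doubling construction for $\tilde F$ in $V_N$ still contributes $0$ to $e(\nu \tilde F, \phi)$ modulo 4.
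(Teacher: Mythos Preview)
Your computation for the standard sliced concordance class is essentially the paper's: both reduce mod 4 to $-2\chi(Y)-3\sigma(Y)$ with $Y$ a simply connected spin filling, identify the nullity of the intersection form with $b_1(M)$, and use $\sigma(Y)\equiv\mu(M,s)$. The difference is organizational. You invoke Corollary~\ref{2hCplx} from Section~\ref{TPC} to obtain a compact smooth $Y$ bounding $M$, while the paper caps the given $V$ directly via the construction in the proof of Theorem~\ref{class}, then \emph{topologically} compactifies the resulting open $Y$ to $Y^*$ with $\partial Y^*=M$. This buys two things. First, Section~\ref{Conc} remains independent of Section~\ref{TPC} as the paper explicitly intends. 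Second, and more importantly, the argument covers every $V$ homeomorphic to $\R\times M$ in one stroke: $\sigma(Y)=\sigma(Y^*)\equiv\mu(M,s)$ mod~8 (not 16, since $Y^*$ is only a topological spin filling), and simple-connectedness of $Y^*$ together with Poincar\'e--Lefschetz duality give $b_1(Y)=b_3(Y)=0$, whence $\chi(Y)=1+b_2(Y)$ and $n(Y)=b_1(M)$.

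Your sketch for the nonstandard case points the right way but has gaps. The identity $\chi(Y)=1+b_2(Y)$ cannot come from a handle count, since your $Y=V_N\cup W$ is noncompact and not a handlebody; you need the compactification $Y^*$ and the vanishing of $b_1,b_3$ as above. And the remark that ``the doubling construction for $\tilde F$ in $V_N$'' contributes $0$ to $e(\nu\tilde F,\phi)$ mod~4 only addresses part of $\tilde F$; the contribution from $\tilde F\cap W$ is left unaccounted for. The correct observation (which is what the paper uses) is that since all of $Y$ is spin, one may represent $\Gamma(J_Y,s_Y)$ by a surface $F$ on the whole of $Y$ and take $\tilde F$ globally as two $\phi$-parallel copies, so that $e(\nu\tilde F,\phi)=4e(\nu F,\phi)\equiv 0$ mod~4 exactly as in the proof of Theorem~\ref{Y}.
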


\begin{proof}
Cap $V$ with an almost-complex spin 2-handlebody without 1-handles (as in the proof of Theorem~\ref{class}). The resulting open manifold $Y$ can be compactified to a topological manifold $Y^*$ with boundary $M$, so $\sigma(Y)=\sigma(Y^*)\equiv\mu(M,s)$ mod~8. Also $Y^*$ is simply connected and $H^3(Y^*)$ is dual to $H_1(Y^*,M)$, so $b_1(Y)=0=b_3(Y)$. Represent $\Gamma(J_Y,s_Y)$ by a surface $F$ in $Y$ and choose $\tau_J$ on $Y-{\rm nbd\thinspace}F$ (cf.~proof of Theorem~\ref{Y} after Corollary~\ref{sliced}). Then mod~4, $$\tilde{\Theta}(J,s, \phi)=4e(\nu F,\phi)-2\chi(Y)-3\sigma(Y)\equiv -2(\chi(Y)+\sigma(Y))-\sigma(Y)\equiv 2(1+n(Y))-\mu(M,s)$$ where $n(Y)$ is the nullity of its intersection form. (Recall that $b_2(Y)=b_+(Y)+b_-(Y)+n(Y)$ and the first two terms cancel mod 2 with $\sigma(Y)$.) But $n(Y)=n(Y^*)=b_1(M)$ via Poincar\'e duality in $Y^*$ since $b_1(Y^*)=0$.
\end{proof}

\begin{Remark}\label{mod4Rem}
For $V$ as above, we can also determine the images of the invariants $\Theta_\phi$ and $\theta$ described in Remark~\ref{tau}(b). The first satisfies the above formula provided that the framing $\phi$ on a dual of $c_1(J)|V$ comes from a framing on $\Gamma(J,s)$ (which occurs on an orbit of $4\Z$ depending on $s$ in general). The second is defined when $\Gamma(J,s)$ has finite order, and differs from the above by 4 times the $\Q/\Z$-valued linking square of the dual of $\Gamma(J,s)$ in $H_1(M)$; see the end of \cite[Section~11.3]{GS}.
\end{Remark}

\begin{thm}\label{Jfunctor}
There is a unique functor $\J$ assigning $\J(V)$ to each $V$ homeomorphic to a manifold of the form $\R\times M$ (allowing $M$ to vary) and assigning to each homeomorphism a $\Z$-space isomorphism preserving $\Gamma$ and $\tilde{\Theta}$. These agree with the isomorphisms induced by sliced concordances (Corollary~\ref{sliced}).
\end{thm}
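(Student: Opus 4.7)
The strategy is to define the functor directly via the classifying invariants $\Gamma$ and $\tilde\Theta$. Uniqueness is nearly immediate from Theorem~\ref{class}: these invariants together classify $\J(V)$ (orbits by $\Gamma$, each orbit cut out as an index-4 coset under $\tilde\Theta$), so any $\Z$-space isomorphism preserving both is uniquely determined. The same observation ensures compatibility with the sliced concordance isomorphisms of Corollary~\ref{sliced}: by Theorem~\ref{class} these preserve $\Gamma$ and $\tilde\Theta$, so must coincide with whichever functor we construct.

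For existence, given a homeomorphism $h\co V\to V'$ and $J\in\J(V)$, the plan is to specify $h_*J\in\J(V')$ by prescribing its invariants. Fix any spin structure $s$ on $V$ and any $\phi\in\Omega(V)$ dual to $\Gamma(J,s)$. Let $s'\in\s(V')$ correspond to $s$ under the bijection $\s(V)\cong\s(V')$ induced by $h$ (Theorem~\ref{spinc}), let $\Gamma_0\in H^2(V')$ correspond to $\Gamma(J,s)$ under the homeomorphism-induced cohomology isomorphism, and let $\phi'\in\Omega(V')$ correspond to $\phi$ via the functor $\Omega$ applied to any smooth map representing the homotopy class of $h$. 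Then take $h_*J$ to be the unique element of $\J(V')$ satisfying $\Gamma(h_*J,s')=\Gamma_0$ and $\tilde\Theta(h_*J,s',\phi')=\tilde\Theta(J,s,\phi)$. Uniqueness of $h_*J$ follows from Theorem~\ref{class}; existence reduces to checking that the prescribed value $\tilde\Theta(J,s,\phi)$ lies in the correct index-4 coset of $\Z/4\di c_1(h_*J)=\Z/4\di c_1(J)$. Proposition~\ref{mod4} identifies this coset mod 4 as $2(1+b_1(M'))-\mu(M',s')$, and the corresponding formula on $V$ agrees with it: $b_1$ is a homotopy invariant, and $\mu$ is a mod-8 (hence mod-4) homeomorphism invariant by Proposition~\ref{mu}.

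Independence from the choices of $s$ and $\phi$ comes next. By Proposition~\ref{vary}, replacing $s$ by another spin structure alters $\Gamma$ by the Bockstein of the difference class in $H^1(V;\Z/2)$ and alters $\tilde\Theta$ by the normal Euler number of an associated framed surface; within a fixed homology class, a different choice of $\phi$ affects $\tilde\Theta$ only modulo $4\di c_1(J)$ (as in the proof of Proposition~\ref{orbit}). Each of these ingredients is functorial under $h$ via $\s$, $\Omega$, and $H^*$, so the prescribed invariants for $h_*J$ depend only on $J$ and $h$. The $\Z$-action on $\J$ is characterized by its effect on $\Gamma$ and $\tilde\Theta$ (fixing the former, shifting the latter by $-4$ per positive generator, Theorem~\ref{class}), so $h_*$ is automatically $\Z$-equivariant, and the functoriality identity $(h_2\circ h_1)_*=h_{2*}\circ h_{1*}$ follows from the functoriality of $\s$, $\Omega$, and $H^*$.

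The main obstacle I anticipate is precisely the coset-matching step in paragraph two: confirming that the prescribed value $\tilde\Theta(J,s,\phi)$ actually lies in the image of $\tilde\Theta(\cdot,s',\phi')$ on $\J(V')$. This is where the mod-4 formula of Proposition~\ref{mod4}, the mod-8 homeomorphism invariance of $\mu$ from Proposition~\ref{mu}, and the functoriality of spin structures from Theorem~\ref{spinc} must all align. Once these are reconciled, the remainder amounts to careful bookkeeping about how the invariant data transport through the three underlying functors.
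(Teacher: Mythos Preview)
Your plan is correct and follows essentially the same route as the paper: define $h_*$ by demanding preservation of $\Gamma$ and $\tilde\Theta$, use Propositions~\ref{mu} and~\ref{mod4} for the coset-matching, and invoke Proposition~\ref{vary} for independence from the auxiliary $(s,\phi)$. The one place the paper is more explicit is this last step, where it gives a concrete degree-$1$ argument---properly homotope $h$ to a smooth map transverse to $\hat F$, pull back a generic normal vector field, and observe that the signed zero count is preserved---to show the relative Euler number in Proposition~\ref{vary} transports correctly; you should supply that detail rather than citing only the functoriality of $\s$, $\Omega$, and $H^*$, since the Euler number is not literally one of those functors.
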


\begin{proof}
First fix a homeomorphism $h\co V'\to V$ and a spin structure $s$ on $V$. By Theorem~\ref{spinc}, $s$ pulls back to a spin structure $s'$ on $V'$ so that the induced spin$^\C$-structures correspond. Then for any $J\in \J(V)$, $h^*\Gamma(J,s)=\Gamma(J',s')$ whenever $J'$ is in the unique orbit in $\J(V')$ whose spin$^\C$-structures correspond to that of $J$. Any $\phi\in\Omega(V)$ dual to $\Gamma(J,s)$ pulls back to $\phi'=h^*\phi\in\Omega(V')$ dual to $\Gamma(J',s')$. By hypothesis, the underlying 3-manifolds $M$ and $M'$ become homeomorphic after product with $\R$, so $\mu(M',s')\equiv\mu(M,s)$ mod~8 by Proposition~\ref{mu}. Then by Proposition~\ref{mod4}, $\tilde{\Theta}(J',s',\phi')$ lies in the same coset as $\tilde{\Theta}(J,s,\phi)$. Thus, there is a unique bijection of these cosets for which $\tilde{\Theta}$ is preserved, and this is $\Z$-equivariant by Theorem~\ref{class}. The resulting $\Z$-space isomorphism $\J(V)\cong\J(V')$ is independent of $(s,\phi)$ by Proposition~\ref{vary}: Properly homotope $h$ to a smooth map $f$ transverse to the given surfaces $F_\pm$ and $\hat F$ in $V$. Then $F_\pm$ with their given framings pull back to representatives $F'_\pm$ of $\phi'_\pm$ in $V'$, and we can assume $\hat F$ pulls back to a suitable $\hat F'$ for applying the proposition to $V'$. A generic normal vector field to $\hat F$ extending $\phi_\pm$ pulls back to $\hat F'$. Since $f$ has degree 1, the signed sum of the zeroes is the same for both vector fields, so $e(\nu\hat F',\phi'_\pm)=e(\nu\hat F,\phi_\pm)$ as required. Ranging over all $h$ gives a functor. The last sentence of the theorem follows from the observation that a sliced concordance $W$ induces the same correspondence of $s$ and $\phi$ as its underlying homeomorphism. This is automatic for $s$ and follows for $\phi$ by reinterpreting $\Omega$ as $[\thinspace\cdot\thinspace,S^2]$.
\end{proof}

\subsection{Sliced concordance and diffeomorphism}

Any $V$ with the homotopy type of a 3-manifold has exactly two sliced concordance classes of smoothings since $H^3(V;\Z/2)\cong\Z/2$. If $V$ comes endowed with a preferred smoothing, such as the obvious smoothing when $V=\R\times M$, the class containing this smoothing is distinguished. It is natural to ask how distinguished this class is, for example:

\begin{ques}\label{products}
Is there a pair of 3-manifolds $M$ and $M'$ (possibly the same) with a homeomorphism $\R\times M\approx \R\times M'$ such that the induced map of sliced concordance classes does not preserve the distinguished classes?
\end{ques}

\noindent Equivalently, must the smoothings induced by two different topological product structures on $V$ be sliced concordant? Theorem~\ref{RxM}, following from the next proposition, gives a partial answer. (See also Remark~\ref{ks} for further discussion.)

\begin{prop}\label{muplus8}
If $V$ is homotopy equivalent to a 3-manifold $M$, and $V'$ is another smoothing of $V$ that is not sliced concordant to it, then for all spin structures $s$ on $V$ we have $\mu(V',s)=\mu(V,s)+8$. In particular, for any smoothing $V'$ of a fixed $\R\times M$, $\mu(V',s)$ equals $\mu(M,s)$ (resp.~$\mu(M,s)+8$) whenever $V'$ is (resp.~is not) sliced concordant to the given product smoothing.
\end{prop}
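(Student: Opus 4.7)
The plan is to construct a closed topological spin 4-manifold $X$ from $V$ and $V'$ whose signature mod~$16$ computes $\mu(V,s)-\mu(V',s)$, and then to identify its Kirby--Siebenmann invariant with the obstruction distinguishing the two sliced concordance classes. Combined with the Kirby--Siebenmann enhancement of Rohlin's theorem---that a closed topological spin 4-manifold satisfies $\sigma\equiv 8\,\mathrm{ks}\pmod{16}$---this will deliver the desired $+8$ discrepancy.

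First I would use Proposition~\ref{mu} to realize both invariants as signatures. Pick a smooth 3-manifold $N$ cutting $V$ and a compact smooth spin cap $P$ of $(N,s|_N)$, and form $Y_V=V_+\cup_N P$, so that $\sigma(Y_V)\equiv\mu(V,s)\pmod{16}$. Transporting $N$ to $V'$ via the ambient homeomorphism $V\approx V'$ and then smoothly isotoping inside $V'$, the same cap $P$ produces $Y_{V'}$---the spin structure on the cut is topologically determined, so the abstract spin 3-manifold bounded is unchanged---with $\sigma(Y_{V'})\equiv\mu(V',s)\pmod{16}$. Truncate both at corresponding smooth cuts near the positive end to get compact smooth spin $Y_V^c,Y_{V'}^c$, and set $X:=Y_V^c\cup_\partial\overline{Y_{V'}^c}$, where the boundary identification is inherited from the homeomorphism $V\approx V'$. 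Novikov additivity together with the vanishing of intersection numbers on compact regions of $V$ and $V'$ (Proposition~\ref{V}) yields $\sigma(X)\equiv\mu(V,s)-\mu(V',s)\pmod{16}$, while the even unimodular spin intersection form automatically forces this difference to vanish mod~$8$, as already noted in Proposition~\ref{mu}.

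The crucial step is to identify $\mathrm{ks}(X)\in H^4(X;\Z/2)\cong\Z/2$. Both $Y_V^c$ and $Y_{V'}^c$ are smoothings of a common compact topological 4-manifold $W$, so $X=W\cup_\partial\overline{W}$ is the topological double of $W$ carrying two possibly distinct smoothings on the two copies. Kirby--Siebenmann smoothing theory identifies $\mathrm{ks}(X)$ with the relative smoothing obstruction between these two smoothings of $W$, which vanishes exactly when they are concordant rel boundary. A sliced concordance between $V$ and $V'$ restricts to the positive halves and extends trivially across the smooth cap $P$, producing precisely such a rel-boundary concordance between $Y_V^c$ and $Y_{V'}^c$; conversely, a rel-boundary concordance can be reopened at the cap and at the positive truncation to recover a sliced concordance of $V$ and $V'$. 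Hence $\mathrm{ks}(X)=1$ exactly under our non-concordance hypothesis, and the generalized Rohlin theorem then yields $\sigma(X)\equiv 8\pmod{16}$, giving $\mu(V',s)=\mu(V,s)+8$. The ``in particular'' statement follows at once by taking the product smoothing $V=\R\times M$: the fibers $\{t\}\times M$ are smooth cuts with restricted spin structure $s|_M$, so $\mu(V,s)=\mu(M,s)$, and sliced concordance invariance of $\mu$ from Proposition~\ref{mu} handles the concordant case.

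The hardest point will be the identification of $\mathrm{ks}(X)$ with the sliced concordance obstruction. Each individual boundary gluing is a homeomorphism of 3-manifolds and hence locally smoothable by Moise, so one must track carefully why the collection of these locally smoothable gluings together carries a global smoothing obstruction precisely when $V$ and $V'$ are not sliced concordant. This is the content of the standard Kirby--Siebenmann correspondence between smoothings of an open 4-manifold (up to concordance) and lifts of its topological tangent microbundle, applied here to $W$ and transferred to its double $X$.
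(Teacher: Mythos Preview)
Your argument has a genuine gap at the step ``Transporting $N$ to $V'$ via the ambient homeomorphism $V\approx V'$ and then smoothly isotoping inside $V'$, the same cap $P$ produces $Y_{V'}$.'' A bicollared topological $3$-manifold in a smooth $4$-manifold need not be topologically isotopic to any smoothly embedded one, and even when a smooth cut $N'$ of $V'$ exists, there is no reason for it to be diffeomorphic to $N$. Indeed this is precisely the content of the proposition: since $\mu(V,s)=\mu(N,s|_N)$ and $\mu(V',s)=\mu(N',s|_{N'})$ by definition, the conclusion $\mu(V',s)=\mu(V,s)+8$ says exactly that no smooth cut of $V$ is spin-diffeomorphic to any smooth cut of $V'$. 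So your assumption that ``the abstract spin $3$-manifold bounded is unchanged'' is equivalent to what you are trying to disprove. The prototypical example is Freedman's smooth $V_P\approx\R\times S^3$ cut by a smooth Poincar\'e sphere $P$: the topological $\{0\}\times S^3$ inside $V_P$ cannot be isotoped to anything smooth, since a smooth $S^3$ cutting $V_P$ would force $\mu(V_P,s)=0$ rather than $8$.

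This error propagates to your identification of $\mathrm{ks}(X)$. If both halves really were smoothings of a common compact $W$ with the same smooth boundary $3$-manifold, then $X$ would be obtained by gluing two smooth pieces along a $3$-manifold (where any homeomorphism is isotopic to a diffeomorphism), hence $X$ would be smooth and $\mathrm{ks}(X)=0$ regardless of whether the two smoothings of $W$ are concordant. The Kirby--Siebenmann invariant is an invariant of the underlying topological manifold, not of a pair of smoothings on it, and the topological double of any smoothable $W$ is smoothable.

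The paper takes a different route that sidesteps these issues entirely: rather than comparing $V$ and $V'$ directly, it manufactures an explicit representative of the nonstandard class by end-summing $V$ with Freedman's $V_P$ along properly embedded lines. The resulting $V^*$ is homeomorphic to $V$, but a smooth cut $N$ of $V$ becomes the smooth cut $N\#P$ of $V^*$, so $\mu(V^*,s)=\mu(N,s)+\mu(P,s)=\mu(V,s)+8$. Since $\mu$ is a sliced-concordance invariant and there are exactly two classes, $V^*$ must be sliced concordant to the given $V'$, and the conclusion follows. Your strategy could likely be repaired by using \emph{different} smooth cuts $N\subset V$ and $N'\subset V'$ with independent caps, and inserting the (genuinely nonsmoothable) topological cobordism between them; then $\mathrm{ks}$ of the resulting closed manifold really would detect the concordance obstruction. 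But as written, the key step fails.
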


\noindent That is, the Kirby-Siebenmann uniqueness obstruction is given by $\frac18(\mu(V',s)-\mu(V,s))\in\Z/2$.

\begin{proof}
Freedman \cite{F} constructed a smooth manifold $V_P$ homeomorphic to $\R\times S^3$ but cut by a smoothly embedded Poincar\'e homology sphere $P$. Since $P$ is the boundary of the $E_8$-plumbing, $\mu(V_P,s_P)=8$, where $s_P$ is the unique spin structure on $V_P$ (and restricts to the unique structure on $P$). In each of $V_P$ and the given $V$, find a properly embedded  line connecting the two ends. Remove a tubular neighborhood of each line, and glue the two resulting manifolds along their $\R\times S^2$ boundaries. The resulting manifold $V^*$ is still homeomorphic to $V$. However, a cut $N$ for $V$ determines a cut $N\#P$ for $V^*$, so $\mu(V^*,s)=\mu(N\# P,s)=\mu(N,s)+8=\mu(V,s)+8$. (Any spin structure on $V$ extends uniquely to $V^*$ and agrees with the one induced by the homeomorphism to $V$.) By invariance of $\mu$, $V^*$ cannot be sliced concordant to $V$, so it is sliced concordant to the given $V'$. Since $\mu(\R\times M,s)=\mu(M,s)$, the result follows.
\end{proof}

\begin{proof}[Proof of Theorem~\ref{RxM}]
To prove the second sentence of the theorem, suppose $\R\times M$ has another product smoothing $V$, so there is a diffeomorphism $f\co V\to \R\times M'$. Then $f$ determines a homotopy equivalence $M\to M'$. If $M$ is hyperbolic or Haken, then every homotopy equivalence $M\to M'$ is homotopic to a diffeomorphism \cite{Mo}, \cite{Wa}. Thus, $f$ is homotopic to a diffeomorphism $\R\times M\to\R\times M'$. After postcomposing $f$ with the inverse of this diffeomorphism, we may assume that $M'=M$ and $f$ is homotopic to the identity. The latter implies $f$ preserves each spin structure $s$. Since it is a diffeomorphism from $V$ to the standard product structure, $\mu(V,s)=\mu(\R\times M,s)$. Thus, Proposition~\ref{muplus8} shows $V$ is sliced concordant to the standard product smoothing as required. Since any self-homeomorphism of $\R\times M$ induces a product smoothing, it must now fix the two sliced concordance classes. Any diffeomorphism between smoothings of $\R\times M$ is a self-homeomorphism, so diffeomorphic smoothings must be sliced concordant, completing the proof for $M$ hyperbolic or Haken. If $M$ is instead given to be a $\Z/2$-homology sphere, we may not have the above diffeomorphism of 3-manifolds. However, in this case there is a unique spin structure, and every diffeomorphism between smoothings preserves the Rohlin invariant for this structure, so we reach the same conclusion.
\end{proof}

This reasoning shows more generally that when there is no homotopy equivalence $h\co M\to M'$ with $\mu(M',h_*s)=\mu(M,s)+8$ for each spin structure $s$, $M$ and $M'$ cannot satisfy Question~\ref{products}, so if $M=M'$, diffeomorphic smoothings of $\R\times M$ must be sliced concordant. Of course, such an $h$ cannot be homotopic to a diffeomorphism. While our homeomorphisms always preserve the orientations and ends of our 4-manifolds, one could also use these tools to study homeomorphisms reversing the orientation on $V$ or switching the ends (preserving orientation on $V$, so reversing it on $M$). In either case, The Rohlin invariant reverses sign by definition. As a further example, we consider lens spaces, the simplest 3-manifolds not covered by Theorem~\ref{RxM}, and where nontrivial homotopy equivalences can arise. The lens space $L(p,q)$ is covered by the theorem, as a $\Z/2$-homology sphere, whenever $p$ is odd. The $p=0$ case $S^2\times S^1$ is easy since both spin structures extend to $D^3\times S^1$ so have $\mu=0$. For even $p>0$, expand $-\frac{p}{q}$ as a continued fraction with coefficients $(a_1,\dots,a_n)$ by repeatedly rounding to the nearest even integer and taking the negative reciprocal of the remainder. Since $p$ is even, it turns out \cite[Solution of Exercise~5.7.21(b)]{GS} that $n$ is odd and each $a_i$ is even (notably the last one), and the Rohlin invariants of the two spin structures differ by the sum of the odd-indexed coefficients, which \cite[Solution of Exercise~5.7.17(a)]{GS} is $p$ for the circle bundle $L(p,1)$. We conclude:

\begin{prop}\label{lens}
Diffeomorphic smoothings of $\R\times L(p,q)$ are sliced concordant, except possibly when $p$ is even and $\sum a_{2j+1}\equiv 8$ mod 16. In particular, the statement holds for all circle bundles over $S^2$ with Euler class not 8 mod 16. \qed
\end{prop}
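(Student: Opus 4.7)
The plan is to invoke the general principle stated just after the proof of Theorem~\ref{RxM}: if there is no orientation-preserving self-homotopy equivalence $h\co M\to M$ with $\mu(M,h_*s)\equiv\mu(M,s)+8\pmod{16}$ for every spin structure $s$, then any two diffeomorphic smoothings of $\R\times M$ are sliced concordant. The cases $p$ odd and $p=0$ are already disposed of in the paragraph preceding the proposition (by Theorem~\ref{RxM} applied to a $\Z/2$-homology sphere, and by the observation that both spin structures on $S^2\times S^1$ bound $D^3\times S^1$ with vanishing signature), so my remaining task is to apply this principle to $L(p,q)$ with $p$ even and positive.

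For such $M$ there are exactly two spin structures $s_0$ and $s_1$, and any orientation-preserving self-homotopy equivalence $h$ must either fix both or swap them, since it permutes a two-element set. The fix-both option can never produce the bad congruence $\mu(h_*s)\equiv\mu(s)+8$. The swap option produces it precisely when $\Delta:=\mu(M,s_1)-\mu(M,s_0)\equiv 8\pmod{16}$, the two required equations collapsing to one because $-8\equiv 8\pmod{16}$. Hence the general principle applies, giving sliced concordance of any two diffeomorphic smoothings, except possibly when $\Delta\equiv 8\pmod{16}$.

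To finish I would simply quote the text's identification $\Delta\equiv\sum a_{2j+1}\pmod{16}$, where $(a_1,\dots,a_n)$ is the continued-fraction expansion of $-p/q$ furnished by \cite[Solution of Exercise~5.7.21(b)]{GS}, to obtain the first assertion. For the circle bundle $L(p,1)$ the text further records $\sum a_{2j+1}=p$, so the excluded residue reduces to $p\equiv 8\pmod{16}$, yielding the ``in particular" statement. The main obstacle is really only the spin-structure dichotomy above; it is forced by counting, and the proposition's ``except possibly" phrasing means I do not need to determine for which $(p,q)$ a self-homotopy equivalence actually swapping the two spin structures exists.
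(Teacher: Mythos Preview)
Your proposal is correct and follows exactly the paper's approach: the proposition carries a \qed because its proof is the paragraph immediately preceding it, which sets up the general principle, disposes of $p$ odd and $p=0$, and computes the Rohlin difference $\sum a_{2j+1}$ for even $p$. You have simply made explicit the one step the paper leaves to the reader---that a self-homotopy equivalence of a manifold with exactly two spin structures either fixes both (never giving $+8$) or swaps them (giving $+8$ for both simultaneously iff the difference is $\equiv 8$)---and then quoted the same continued-fraction identifications.
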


\begin{proof}[Proof of Corollary~\ref{end}]
We are given a 4-manifold $X$ with an end homeomorphic to $\R\times M$, and wish to canonically map the set of smoothings of the end to $\Z/2$. By a smoothing of the end, we mean a smoothing of a neighborhood of it, which we consider up to sliced concordance after passing to a smaller neighborhood. We assign $0\in\Z/2$ to such a smoothing if it is diffeomorphic to a product. The only subtlety is that there may not be a single neighborhood of the end in which two such smoothings both appear as products. However, we can find an $\R\times M$ neighborhood for one smoothing. For another smoothing as $\R\times M'$, the hypotheses allow us to arrange $M'=M$ as before. The second product structure then gives a topological embedding of $M$ in $\R\times M$. If $M$ has more than one spin structure, the hypotheses allow us to assume this embedding is homotopic to the inclusion determined by the first product structure. Since the two copies of $M$ determine the Rohlin invariants of their respective smooth structures, the latter are sliced concordant as required.
\end{proof}

\subsection{Realizing sliced concordance classes by holomorphic embeddings}

Since a smoothing $V$ of $\R\times M$ is homeomorphically identified with it by definition, Theorem~\ref{Jfunctor} identifies $\J(V)$ with $\J(M)$ even when $V$ is not sliced concordant to $\R\times M$. Thus, it makes sense to ask about realizing $J\in\J(M)$ holomorphically on a preassigned exotic smoothing of $\R\times M$.

\begin{thm}\label{holom}
For every triple $(M,J,s)$, every smoothing on $\R\times M$ is sliced concordant to one for which $J$ is realized holomorphically by a smooth embedding into a closed, simply connected, complex surface, as a boundary collar of an embedded, compact, topological manifold $Y$ homotopy equivalent to a wedge of 2-spheres, with a spin structure on $Y$ extending $s$.

(a) For the sliced concordance class of the standard smoothing, $Y$ can be arranged to be a topologically embedded 2-handlebody without 1-handles, with the form $Y_\sigma$ in an embedded Stein onion exhibiting levels of $M$ as TPC embeddings.

(b) For the other class, no smoothing even arises as a boundary collar smoothing of a topologically embedded handlebody. If $M$ is hyperbolic, Haken or $S^3$, such a smoothing is not diffeomorphic to a neighborhood of the end of any smoothing of the interior of a handlebody (with connected boundary).
\end{thm}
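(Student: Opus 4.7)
Proof proposal. The plan is to combine Corollary~\ref{2hCplx} with the construction in the proof of Theorem~\ref{main}(b) for the standard sliced concordance class, to modify by topological connected sum with Freedman's $E_8$-manifold for the nonstandard class, and to derive part~(b) from a Lefschetz-duality computation together with Theorem~\ref{RxM} and Corollary~\ref{end}.

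For~(a) and the standard portion of the main claim, Corollary~\ref{2hCplx} furnishes a smooth spin complex 2-handlebody $Y_0$ without 1-handles, bounded by $(M,s)$, with $J_{Y_0}|M$ representing $J$. Running the proof of Theorem~\ref{main}(b) on $Y_0$ produces a topological embedding into a closed simply connected complex surface $X$, and Theorem~\ref{onion} puts $Y_0$ in the form $Y_\sigma$ inside an embedded Stein onion in $X$. Since $H^3(Y_0;\Z/2)=0$, Theorem~\ref{Y} implies the resulting boundary collar smoothing is sliced concordant to the standard smoothing on $\R\times M$, while $J$ is realized holomorphically on a bicollar of $M$.

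For the nonstandard class, replace $Y_0$ by $Y=Y_0 \# E$, where $E$ is Freedman's closed simply connected topological spin 4-manifold with $E_8$ intersection form and Kirby--Siebenmann invariant~$1$. Then $Y$ is compact topological with $\partial Y=M$, still homotopy equivalent to a wedge of 2-spheres, carries a topological spin structure extending $s$, and has $\sigma(Y)=\sigma(Y_0)+8$. In the proof of Theorem~\ref{main}(b), $X$ is obtained via Freedman's classification as a homeomorph of $Z\# Z'$ with $Z\supset Y_0$; substitute $Z\# E$ for $Z$ and absorb an extra Kirby--Siebenmann-nontrivial summand in $Z'$ so that the total Kirby--Siebenmann invariant remains zero and the intersection form, characteristic element, and divisibility data still match a simply connected complex surface $X'$ produced by the same bookkeeping. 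Freedman's classification then furnishes the required topological embedding $Y\hookrightarrow X'$ with $J$ realized on the boundary collar exactly as in~(a). For any smooth cut $N$ of $V$ with compact complementary piece $Y_N\subset X'$ (homeomorphic to $Y$, hence $\sigma(Y_N)=\sigma(Y)$ and spin by topological invariance of $w_2$), Proposition~\ref{mu} gives $\mu(V,s)\equiv\sigma(Y)\equiv\mu(M,s)+8\pmod{16}$, and Proposition~\ref{muplus8} places $V$ in the nonstandard sliced concordance class.

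For~(b1), let $V$ be the boundary collar smoothing of a smoothing $\sigma_1$ on $\inter Y'$ with $Y'$ a smooth handlebody topologically embedded in some 4-manifold. The intrinsic smooth structure on $Y'$ supplies another smoothing $\sigma_0$ on $\inter Y'$ whose boundary collar is the standard smoothing on $\R\times M$; sliced concordance classes of smoothings on $\inter Y'$ form a torsor over $H^3(\inter Y';\Z/2)$ whose restriction to the end corresponds, via $\inter Y'\simeq Y'$, to $H^3(Y';\Z/2)\to H^3(M;\Z/2)$. The tail of the long exact sequence of $(Y',M)$ reads $H^3(M;\Z/2)=\Z/2\xrightarrow{\delta}H^4(Y',M;\Z/2)=\Z/2\to H^4(Y';\Z/2)=0$, forcing $\delta$ to be an isomorphism and hence $H^3(Y';\Z/2)\to H^3(M;\Z/2)$ to vanish, so $\sigma_1$ and $\sigma_0$ induce the same boundary collar class, namely the standard one. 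For~(b2), a diffeomorphism of $V$ with a neighborhood of the end of a smoothing on $\inter Y'$ gives a homeomorphism $\R\times M\cong\R\times M'$; Mostow/Waldhausen rigidity (hyperbolic/Haken) or the Poincar\'e conjecture ($S^3$) lets us take $M'=M$, and Theorem~\ref{RxM} combined with (b1) then places $V$ in the standard sliced concordance class, contradicting the nonstandard hypothesis. The main technical obstacle will be the nonstandard-class construction: checking that after the $Z\# E$ substitution the proof of Theorem~\ref{main}(b) still produces a genuine simply connected complex surface $X'$ with the preassigned $c_1$, characteristic element, and divisibility data, and that Theorem~\ref{Y}'s framework extends through the topologically non-smoothable $Y$ to pin down $\Gamma(J_V,s)$ and $\tilde\Theta(J_V,s,\phi)$ in the required orbit.
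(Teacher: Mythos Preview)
Your outline is essentially the paper's argument, with one tactical variation and one genuine gap.

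\textbf{Variation for the nonstandard class.} The paper sums \emph{two} copies of Freedman's $E_8$-manifold into $Z^*$, one inside $Y^*-F^*$ and one in $Z^*-Y^*$. This keeps $\operatorname{ks}(Z)=0$ and shifts $\sigma(Z)$ by $16$, so the last paragraph of the proof of Theorem~\ref{main}(b) runs verbatim against the \emph{same} target $X$ (taken with $b_\pm$ a bit larger). Your single-$E_8$ variant, with a compensating Kirby--Siebenmann-nontrivial $Z'$, can be made to work, but you then need $Q^\perp$ even with $\sigma(Q^\perp)\equiv 8\pmod{16}$ and must verify that a genuine closed complex surface $X'$ with the required $c_1$-data exists to absorb this. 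The paper's double-$E_8$ trick sidesteps that bookkeeping.

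\textbf{The gap you flag.} You correctly identify the main obstacle---extending the $\Gamma$/$\tilde\Theta$ computation through the unsmoothable $Y$---but do not resolve it. Theorem~\ref{Y} as stated requires $Y$ smooth, and Definition~\ref{thetaDef} needs a smooth cut of $V$ capped by a smooth almost-complex manifold, so one cannot simply invoke those results. The paper's fix is local: pick nested tubular neighborhoods $F^*\subset U'\subset\cl U'\subset U\subset Y^*$ disjoint from the $E_8$-sum region. On $U$ the smoothing inherited from $X$ is sliced concordant to the original one (the sum happened elsewhere), so $F^*$ is cobordant, inside that sliced concordance over $U'$, to a surface $F$ that is genuinely smooth in $X$ and dual to $\Gamma(J_X,s)$ in $Y$. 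The framed normal Euler numbers of $F$ and $F^*$ agree (same argument as in the proof of Theorem~\ref{class}), and the invariant computation then proceeds exactly as at the end of the proof of Theorem~\ref{Y}. Adding the $E_8$ summand to $Y^*$ shifts two terms of $\tilde\Theta$ by fixed multiples of $4$, but since we are still free to adjust by $S^2\times S^2$ summands, every value in the required coset is hit.

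Your arguments for (a) and for both parts of (b) match the paper's; in (b2) the paper packages the rigidity step as an appeal to Corollary~\ref{end}, but the content is the same.
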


\noindent See the text before Theorem~\ref{sliced0} for boundary collar smoothings and Definitions~\ref{TPCdef} and~\ref{oniondef} for TPC embeddings and Stein onions. A compact topological 4-manifold admits a handle structure if and only if it is smoothable, so $Y$ must be unsmoothable for the sliced concordance class discussed in (b) (although its interior inherits a smoothing from the embedding).

\begin{proof}
By Theorem~\ref{main}(b), $(M,J)$ has a TPC embedding in any simply connected, nonminimal complex surface $X$ with $b_{\pm}(X)$ sufficiently large. Its proof gives a suitable 2-handlebody $Y\subset X$, with the given $s$ extending over $Y$ since $m=1$, and a boundary collar smoothing sliced concordant to the standard smoothing on $\R\times M$. For the other sliced concordance class, return to the last paragraph of that proof. At that point, we had manifolds that we now denote $F^*\subset Y^*\subset Z^*$. Before constructing $Q^\perp$, create a pair $(Z,Y)$ from $(Z^*,Y^*)$ by summing with two copies of  Freedman's closed, simply connected, topological 4-manifold whose intersection form is $E_8$, with one summand in $Y^*-F^*$ and the other in $Z^*-Y^*$. The spin structure $s$ on $Y^*$ uniquely extends over $Y$. The new $Y$ is unsmoothable, but $Z$ still has vanishing Kirby--Siebenmann invariant in $\Z/2$. Continuing the proof as before yields a topological embedding $Y\subset X$, although Theorem~\ref{onion} no longer provides a Stein onion since $Y$ is not a 2-handlebody. But $Y$ still has the homotopy type of a wedge of 2-spheres by Whitehead's Theorem. The induced boundary collar smoothing $V$ now lies in the other sliced concordance class, since $\mu(V,s)=\mu(M,s)+8$ due to the extra $E_8$ added to the intersection form of $Y$. To identify the induced complex structure $J_V$ on $V$, choose tubular neighborhoods $F^*\subset U'\subset\cl U'\subset U\subset Y^*$ avoiding the connected sum region. Then the smoothing on $U$ induced from its embedding in $X$ is sliced concordant to the original. Since $F^*$ is dual to the generator of $H^2(U,U-U')$ (classified by $\C P^\infty$), it is now related by a cobordism in the sliced concordance over $U'$ to a surface $F$ in $U'$ that is smooth in $X$. Then $F$ is dual to $\Gamma(J_X,s)$ in $Y$, and its end inherits a framing from the original on $\partial F^*\subset Y^*$. The normal Euler numbers of $F$ and $F^*$ are equal (as in the proof of Theorem~\ref{class}). Thus, $J_V$ has the same invariants as $J$, as at the end of the proof of Theorem~\ref{Y} (following Corollary~\ref{sliced}). (Adding the $E_8$ summand to $Y^*$ changes two terms of $\tilde{\Theta}$ by a fixed multiple of 4, but we can still realize any value in the appropriate coset; cf.~proof of Theorem~\ref{main}.) This proves everything except (b).

To complete the proof, first note that for any compact, topological 4-manifold $Y$ with boundary $M$, any two smoothings of $\inter Y$ have sliced concordant ends: The smoothings may differ by some element of $H^3(\inter Y;\Z/2)$, but the restriction map to a neighborhood of the end vanishes. Thus, if $Y$ is smoothable, every boundary collar smoothing is sliced concordant to the standard smoothing on $\R\times M$. For the remaining sentence of (b), suppose $Y$ is smooth with boundary $M'$, but we are only given a smoothing $V$ of $\R\times M$ diffeomorphic to a neighborhood of the end of some smoothing of $\inter Y$. Then one end of $V$ is diffeomorphic to the end of the latter smoothing, so it is sliced concordant to some product structure as before. For $M$ as hypothesized, applying Corollary~\ref{end} to $V$ shows that end is sliced concordant to the standard product structure. Since $\R\times M$ is isotopic to a neighborhood of its end, $V$ is sliced concordant to the standard structure.
\end{proof}

\begin{Remark}\label{ks} Without the hypotheses in the last sentence of the theorem, any pair $M$, $M'$ as in Question~\ref{products} would immediately yield a counterexample in any smooth $Y$ with boundary $M'$. To better understand this hypothetical example, smoothly identify the end of $\inter Y$ with $\R\times M'$. The given homeomorphism then embeds $M$ in $Y$, cutting it into two compact pieces $Y^*$ and $K$ with $Y^*$ a deformation retract of $\inter Y$. Each piece has nontrivial Kirby--Siebenmann invariant. (For example, we can take $Y$ to be spin and apply Proposition~\ref{muplus8}, then notice that the invariant must be the same for both pieces, and $K$ is independent of choice of $Y$.) The nonstandard sliced concordance class on $\R\times M$ is then exhibited by a boundary collar smoothing, but for the unsmoothable manifold $Y^*$. The piece $K$ must be a 4-dimensional topological h-cobordism with nontrivial Kirby--Siebenmann invariant. The author does not know whether such a cobordism can actually exist.
\end{Remark}

The above theorem raises the question of whether a smoothing of $\R\times M$ induced by a TPC embedding of $M$ must be sliced concordant to the standard smoothing. This is always true for embeddings arising from Theorem~\ref{onion}, since these arise as boundaries of smoothable manifolds (topologically embedded 2-handlebodies) by construction. However, the definitions of Stein onion (in the present paper) and TPC embedding do not require smoothability of the embedded compact 4-manifolds. A basic example is Freedman's topological embedding of the Poincar\'e homology sphere $P$ in $\C^2$, as the boundary of a contractible manifold $\Delta$. This contractible manifold is unsmoothable, carrying the Kirby--Siebenmann obstruction in $H^4(\Delta,\partial\Delta;\Z/2)\cong\Z/2$, so the induced smoothing $V$ on $\R\times P$ is not sliced concordant to the product. (From our perspective, this is because $\mu(V,s)\equiv\sigma(\Delta)=0\equiv\mu(P,s)+8$ mod~16, where $s$ is the unique spin structure on each manifold.)

\begin{ques}\label{Delta}
Can $\Delta$ be assumed TPC? Can its interior be assumed Stein? Does $P$ have a TPC embedding bounding a homology ball in any complex surface?
\end{ques}

\noindent It is not clear to the author whether $\Delta$ admits a mapping cylinder structure (as required for the stronger Stein onion condition). We can at least write $\Delta$ as a nested intersection of smooth, compact 2-handlebodies in $\C^2$: Surround $\Delta$ by an arbitrarily small compact neighborhood with a smooth 3-manifold boundary. This neighborhood admits a handle structure without 4-handles. Since $\Delta$ is contractible, the cocores of the 3-handles can be homotoped, hence smoothly isotoped, off of it (since they are 1-dimensional). After deleting these cocores, we are left with a 2-handlebody containing a smaller neighborhood of $\Delta$. By applying Theorem~\ref{onion} to consecutive pairs, we can topologically isotope any finite collection of these 2-handlebodies to be Stein open subsets. However, we cannot expect the sequence of isotopies to converge to an isotopy on $\Delta$, since each such isotopy squeezes away most of the volume of the 2-handlebody. If $\inter \Delta$ is Stein, it is diffeomorphic to the interior of an infinite 2-handlebody. (Conversely, if it is homeomorphic to such a handlebody interior, it is topologically isotopic to a Stein surface \cite[Corollary~1.2]{steintop}.)

\begin{conj}
Every handlebody with interior homeomorphic to $\inter \Delta$ must have infinitely many 3-handles.
\end{conj}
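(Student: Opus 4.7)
Plan: I would argue by contradiction. Suppose $H$ is a smooth handlebody with $\inter H$ homeomorphic to $\inter\Delta$ and with only finitely many handles of index $\ge 3$. The goal is to manufacture from this hypothesis a smooth structure on the topological compactification $\Delta$ itself, contradicting the fact (recalled just before Question~\ref{Delta}) that $\Delta$ has nontrivial Kirby--Siebenmann invariant, equivalently that the end of $\inter\Delta$ inherited from $\C^2$ lies in the nonstandard sliced concordance class of $\R\times P$.

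A homeomorphism $\varphi\co\inter H\to\inter\Delta$ pulls back the topological collar of $P=\partial\Delta$ to give a topological compactification $\bar H$ of $\inter H$ with $\partial\bar H=P$ and $\bar H\approx\Delta$. Next, choose a compact smooth sub-handlebody $K\subset H$ containing every handle of index $3$ or $4$ (and enough lower-index handles for connectivity), so that the complement $T=\overline{H\setminus K}$ is an infinite ``$2$-handlebody tail'' attached along $\partial K$. Upgrading $\bar H$ to a smooth compact $4$-manifold then reduces to smoothly compactifying $T$ by adjoining $P$ at its end via $\varphi$: gluing the result back to $K$ produces a smoothing of $\bar H\approx\Delta$, the desired contradiction.

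The crux is showing that the topological end of $T$ admits a smooth collar. Since $P$ is a $\Z$-homology sphere, Corollary~\ref{end} assigns a canonical $\Z/2$-invariant to smoothings of this end, taking value $0$ on every smooth product structure and value $1$ on the smoothing inherited from $\inter\Delta$ (by the computation $\mu(V,s)\equiv\sigma(\Delta)=0\equiv\mu(P,s)+8\pmod{16}$ recalled in the text). A smooth compactification of $T$ would realize value $0$ via the smooth collar, so it suffices to show that in the presence only of handles of index $\le 2$ in $T$, the sliced concordance class of the end is forced to the standard one. The natural tool is Proposition~\ref{muplus8}: exhausting $T$ by finite spin sub-handlebodies $T_i\supset K$, one has $\mu(\partial T_i,s)\equiv\sigma(T_i)\pmod{16}$ for all sufficiently large $i$, reducing the task to a signature constraint on the nested finite $2$-handlebody pieces.

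The hard part will be forcing that signature constraint from the hypothesis of no $3$-handles in $T$. Casson handles show that infinite $2$-handlebodies can exhibit genuinely wild smooth behavior at topologically tame ends, so no blanket smoothness-at-infinity statement is available. Any proof must use essentially both that $H$ is contractible (which restricts the $2$-handle attachments through cancellation relations with the $0$- and $1$-handles built into $T$) and that the end is specifically $\R\times P$. The most natural route appears to be a careful bookkeeping of the invariants $\Gamma$ and $\tilde\Theta$ of Section~\ref{Conc} along cuts approaching the end of $T$, combined with a controlled-smoothing-theoretic input at infinity in the spirit of Quinn's end theorem; supplying that last ingredient is precisely the difficulty highlighted in Remark~\ref{ks}, and is what keeps the statement a conjecture.
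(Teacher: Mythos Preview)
The statement is a \emph{conjecture} in the paper, not a theorem: the paper gives no proof, only the reformulation that no smoothing of $\inter\Delta$ can be diffeomorphic to the interior of a handlebody with finitely many 3-handles, together with the analogy to Taylor's result on exotic $\R^4$'s. Your proposal correctly recognizes this and ends by identifying the missing ingredient, so you are not actually claiming a proof.

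Your heuristic outline has the right skeleton. Since $\inter\Delta$ is contractible, $H^3(\inter\Delta;\Z/2)=0$, so all smoothings of $\inter\Delta$ are sliced concordant to one another; hence every smoothing has its end in the nonstandard class on $\R\times P$ (as the embedding in $\C^2$ shows). Thus the contradiction you seek reduces exactly to showing that a handlebody with only finitely many 3-handles and end homeomorphic to $\R\times P$ must have that end in the \emph{standard} class. One point of your write-up is muddled: the sentence beginning ``A smooth compactification of $T$ would realize value $0$ \ldots\ so it suffices'' reads as if showing the end is standard would yield a smooth compactification, but that implication goes the wrong way. Being in the standard sliced concordance class does not produce a smoothly embedded $P$; it is already enough for the contradiction by itself, and the compactification discussion is unnecessary.

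The genuine gap you flag is real and is the whole difficulty: nothing in the paper (or elsewhere, as far as the paper indicates) forces the signature/Rohlin constraint you need on the finite sub-2-handlebodies $T_i$. Casson-handle phenomena show that infinite 2-handlebodies with topologically tame ends can sit in either sliced concordance class, so contractibility of $H$ and the specific end $P$ must be used essentially, and no known argument does this.
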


\noindent Equivalently, this says that no smoothing of $\inter\Delta$ is diffeomorphic to the interior of a handlebody with at most finitely many 3-handles. The most complicated exotic smoothings of $\R^4$ (those with nonzero Taylor invariant) do not admit such diffeomorphisms \cite{T}, and one might expect smoothings of $\inter\Delta$ to be ``worse". However, some exotic smoothings of $\R^4$ that embed in $\C^2$ (as does $\inter \Delta$) admit such diffeomorphisms, and in fact, are Stein in the inherited complex structure \cite{steindiff}.


\begin{thebibliography}{MM}

\bibitem[CE]{CE}
K.\ Cieliebak and Y.\ Eliashberg,
{\em From Stein to Weinstein and Back---Symplectic Geometry of Affine Complex Manifolds},
Colloquium Publications {\bf 59}, Amer.\ Math.\ Soc., Providence, 2012.

\bibitem[EH]{EH}
J.\ Etnyre and K.\ Honda,
{\em On the nonexistence of tight contact structures},
Ann.\ Math.\ (2) {\bf 153} (2001), no.\ 3, 749--766.

\bibitem[F]{F}
M.\ Freedman,
{\em The topology of four-dimensional manifolds},
J.\ Differ.\ Geom.\ {\bf 17} (1982),  357--453.

\bibitem[FQ]{FQ}
M.\ Freedman and F.\ Quinn,
{\em Topology of 4-Manifolds},
Princeton Math.\ Ser., 39,
Princeton Univ.\ Press,
Princeton, NJ, 1990.
 
\bibitem[Gi]{Gi}
E.~Giroux,
{\em Structures de contact en dimension trois and bifurcations des feuilletages de surfaces},
Invent.\ Math.\ {\bf 141} (2000), 615--689.
 
\bibitem[G1]{spinc}
R.\ Gompf,
{\em Spin$^c$ structures and homotopy equivalences},
Geom.\ Topol.\ {\bf 1} (1997), 41--50.

\bibitem[G2]{Ann}
R.\ Gompf,
{\em Handlebody construction of Stein surfaces},
Ann.\ Math.\ (2) {\bf148} (1998), 619--693.

\bibitem[G3]{steindiff}
R.\ Gompf,
{\em Smooth embeddings with Stein surface images},
J.\ of Topology {\bf 6} (2013), 915--944.

\bibitem[G4]{steintop}
R.\ Gompf,
{\em Creating Stein surfaces by topological isotopy},
J.\ Differ.\ Geom., to appear, arXiv:2002.02042 .

\bibitem[GS]{GS}
R.\ Gompf and A.\ Stipsicz,
{\em 4-Manifolds and Kirby Calculus},
Grad.\ Studies in Math.\ {\bf 20},
Amer.\ Math.\ Soc., Providence, 1999.
 
\bibitem[H]{H}
K.~Honda,
{\em On the classification of tight contact structures I},
Geom.\ Topol.\ {\bf 4} (2000), 309--368.

\bibitem[KS]{KS}
R.\ Kirby and L.\ Siebenmann,
{\em Foundational Essays on Triangulations and Smoothings of Topological Manifolds},
Ann.\ Math.\ Studies 88, Princeton University Press, 1977.
 
\bibitem[McS]{McS}
D.\ McDuff and D.\ Salamon,
{\em Introduction to Symplectic Topology, Third Edition},
Oxford Grad.\ Texts in Math.\ {\bf 27},
Oxford U.~Press, 2017.
 
\bibitem[M]{M}
J.\ Milnor,
{\em Topology from the Differential Viewpoint},
U.~of Virginia Press, 1965.

\bibitem[Mo]{Mo}
G.~D.\ Mostow,
{\em Quasi-conformal mappings in $n$-space and the rigidity of hyperbolic space forms},
Publ.~I.~H.~E.S.~{\bf 34} (1968),  53--104.

\bibitem[P]{P}
L.\ Pontryagin,
{\em A classification of mappings of the three-dimensional complex into the two dimensional sphere},
Matematicheskii Sbornik {\bf 9(51)} (1941),  331--363.

\bibitem[T]{T}
L.\ Taylor,
{\em An invariant of smooth 4-manifolds},
Geom.\ Topol.\ {\bf1} (1997), 71--89.

\bibitem[Wa]{Wa}
F.~Waldhausen,
{\em On irreducible 3-manifolds which are sufficiently large},
Ann.~Math.~{\bf 87} (1968), 56--88.

\bibitem[W]{W}
C.~T.~C.~Wall,
{\em On the orthogonal groups of unimodular quadratic forms},
Math.~Annalen {\bf 147} (1962), 328--338.



\end{thebibliography}
\end{document}